\documentclass[11pt]{elsarticle}


\makeatletter
\def\ps@pprintTitle{%
  \ps@plain 
  \let\@oddhead\@empty
  \let\@evenhead\@empty
}
\makeatother

\makeatletter
\def\elscormarksym#1{%
  \ifcase#1\or$\dagger$\or$\ddagger$\fi
}

\def\corref#1{%
  \edef\cnotenum{\elsRef{#1}}%
  \edef\@corref{\elscormarksym{\cnotenum}\hskip-1pt}%
}

\def\cormark[#1]{%
  \edef\cnotenum{\elsRef{#1}}%
  \unskip\textsuperscript{\sep\elscormarksym{\cnotenum}\hspace{-1pt}}%
  \let\sep=,%
}

\def\cortext[#1]#2{%
  \g@addto@macro\@cornotes{%
    \refstepcounter{cnote}\elsLabel{#1}%
    \def\thefootnote{\ifcase\thecnote\or$\dagger$\or$\ddagger$\fi}%
    \footnotetext{#2}%
  }%
}
\makeatother

\usepackage{lineno,hyperref}
\modulolinenumbers[5]

\usepackage{amsmath,amssymb,amsthm,amsfonts,afterpage,float,bm,stmaryrd,paralist,wrapfig,bbm,soul,extarrows}

\usepackage{cancel}
\usepackage{color}
\usepackage{fancyhdr}
\usepackage{graphics}
\usepackage{hyperref}
\usepackage{graphicx,epsf}
\usepackage{makeidx}
\usepackage{epsfig}
\usepackage{lscape}
\usepackage{upgreek}
\usepackage{mathbbol} 
\usepackage{setspace}
\usepackage{etoolbox}

\usepackage[margin=0.25in]{geometry}
\usepackage{pgfplots}
\usepackage{tikz}
\usetikzlibrary{arrows.meta}

\usepackage{empheq,xcolor}

\pssilent

\newtheorem{theorem}{Theorem}[section]  
\newtheorem{lemma}[theorem]{Lemma}
\newtheorem{example}[theorem]{Example}
\newtheorem{definition}[theorem]{Definition}
\newtheorem{corollary}[theorem]{Corollary}

\newtheorem{remark}[theorem]{Remark}

\AtBeginEnvironment{theorem}{\setstretch{1.3}}
\AtBeginEnvironment{lemma}{\setstretch{1.3}}
\AtBeginEnvironment{proof}{\setstretch{1.3}}
\AtBeginEnvironment{example}{\setstretch{1.3}}
\AtBeginEnvironment{definition}{\setstretch{1.3}}
\AtBeginEnvironment{corollary}{\setstretch{1.3}}
\AtBeginEnvironment{proposition}{\setstretch{1.3}}
\AtBeginEnvironment{remark}{\setstretch{1.3}}

\numberwithin{equation}{section}
\numberwithin{figure}{section}
\numberwithin{table}{section}

\def\XXint#1#2#3{{\setbox0=\hbox{$#1{#2#3}{\int}$}
\vcenter{\hbox{$#2#3$}}\kern-.51\wd0}}

\newcommand{\reff}[1]{{\rm (\ref{#1})}}

\newcommand{\bbR}{{\mathbb R}}

\newcommand{\bx}{\mathbf x}
\newcommand{\by}{\mathbf y}
\newcommand{\bz}{\mathbf z}
\newcommand{\bfm}{\mathbf m}
\newcommand{\bn}{\mathbf n}

\newcommand{\bv}{\mathbf v}
\newcommand{\bu}{\mathbf u}
\newcommand{\bw}{\mathbf w}
\newcommand{\bA}{\mathbf A}
\newcommand{\bS}{\mathbf S}
\newcommand{\bT}{\mathbf T}
\newcommand{\rmT}{\mathrm T}

\renewcommand{\div}{\operatorname{div}}

\newcommand{\KL}{\operatorname{KL}}

\newcommand{\TK}{\mathcal{T}_\mathrm{K}}

\graphicspath{{Figure/}}

\begin{document}

\setlength{\pdfpageheight}{\paperheight}
\setlength{\pdfpagewidth}{\paperwidth}
\title{Synchronization of Unbalanced Dynamical Optimal Transport across Multiple Spaces\tnoteref{alpha}}

\author[1,2]{Zixuan Cang\corref{cor1}}
\author[1]{Jingfeng Wang}
\author[1]{Xiaoqi Wei}
\author[3]{Yanxiang Zhao\corref{cor2}}

\address[1]{Department of Mathematics, North Carolina State University, Raleigh, NC, USA}
\address[2]{Center for Research in Scientific Computation, North Carolina State University, Raleigh, NC, USA}
\address[3]{Department of Mathematics, George Washington University, Washington, DC, USA}

\cortext[cor1]{Corresponding author. Email: zcang@ncsu.edu}
\cortext[cor2]{Corresponding author. Email: yxzhao@gwu.edu}

\tnotetext[alpha]{Authors are listed in alphabetical order by last name.}

\begin{abstract}

  Many biological systems are observed through heterogeneous modalities, requiring transport models that couple dynamics across spaces while allowing mass variation. To address this challenge, we introduce Unbalanced Synchronized Optimal Transport (UnSyncOT), a novel dynamical framework that synchronizes transport-reaction flows between spaces via either geometric embeddings (Monge type) or Markov kernels (Kantorovich type). For both cases we prove that UnSyncOT can be reduced to a single-space problem: the Monge model becomes a Benamou-Brenier problem with a metric-modified kinetic energy, and the Kantorovich model yields a nonlocal action induced by the synchronization operator, both of which fit within a dissipation-distance formulation. We also analyze the pure transport (Wasserstein) and pure reaction (Fisher-Rao) limits and derive structural properties. For the Kantorovich case we propose an approximate UnSyncOT by introducing a Hellinger-Kantorovich based trapezoidal time discretization of the secondary action for efficient computation. Finally we present staggered-grid discretizations and primal-dual solvers, validate the convergence, stability, and efficiency, and demonstrate coherent dynamics reconstructions across spaces.
  
\end{abstract}

\begin{keyword}
Benamou-Brenier optimal transport; Wasserstein-Fisher-Rao optimal transport; unbalanced synchronized optimal transport; primal-dual methods; Yan algorithm.
\end{keyword}

\date{\today}
\maketitle

\section{Introduction}\label{section:Intro}

Optimal transport (OT) finds an optimal coupling between probability distributions that minimizes a prescribed cost and induces a geometry on the space of measures \cite{peyre2019computational,villani2009optimal}. OT has extensive applications in imaging \cite{chizat2018scaling}, machine learning \cite{tolstikhin2017wasserstein,fatras2019learning}, and biology \cite{schiebinger2019optimal,cang2020inferring,cang2023screening,sha2024reconstructing,tong2020trajectorynet}. Significant theoretical and computational advances have produced scalable algorithms \cite{cuturi2013sinkhorn,ferradans2014regularized,benamou2015iterative}. A rich family of variants addresses common modeling needs such as partial and unbalanced or unnormalized OT that relax marginal constraints \cite{chizat2018scaling,figalli2010optimal,igbida2018augmented,chizat2018unbalanced,gangbo2019unnormalized,lee2021generalized}, structured OT for graphs \cite{alvarez2018structured}, and multi-marginal OT \cite{pass2015multi,strossner2023low}.

In many applications, the dynamics of a changing system can be modeled by the dynamical formulation of OT. For example, cellular differentiation dynamics has been modeled using the Benamou-Brenier (BB) formulation \cite{tong2020trajectorynet,benamou2000computational}. Taking the squared Euclidean distance as the cost, BB formulation is equivalent to the static OT \cite{benamou2000computational}. Recently, primal-dual based numerical methods have been developed to solve dynamical OT \cite{papadakis2014optimal}, and Wasserstein gradient flow \cite{carrillo2022primal}. Meanwhile, deep learning based approaches such as TrajectoryNet, MIOFlow, and Conditional Flow Matching have also been designed to solve dynamical OT \cite{tong2020trajectorynet,huguet2022manifold,tong2024improving}. When the mass variation is allowed,  dynamical OT can also be extended to various unbalanced variants such as Wasserstein-Fisher-Rao (WFR) \cite{chizat2018interpolating} form or Hellinger-Kantorovich (HK) form \cite{liero2016optimal,liero2018optimal}, and unnormalized forms \cite{gangbo2019unnormalized,lee2021generalized}. Numerical methods, including traditional primal-dual based schemes \cite{gangbo2019unnormalized,lee2021generalized, chizat2018interpolating}, and deep learning based methods \cite{sha2024reconstructing,zhang2025modeling}, have been developed to solve unbalanced dynamical OT problems. 

Modern data acquisition increasingly yields multimodal measurements of the same system, such as joint single-cell modalities (RNA, ATAC) and spatial transcriptomics \cite{heumos2023best}. Each modality lives in a distinct feature space and only partially observes the underlying state, and biological processes naturally involve mass variation (proliferation, death, activation), calling for \emph{unbalanced} OT models (WFR or equivalently HK). Existing single-space dynamical OT methods reconstruct convincing flows per modality but do not guarantee coherence across spaces, namely, a trajectory optimal in one space may induce implausible dynamics in another. This motivates a principled framework that jointly models transport-reaction dynamics across coupled spaces while allowing mass change.

In our early work \cite{cang2025synchronized}, as the first attempt, we introduce the synchronized (balanced) optimal transport, which assumes equal mass along the temporal trajectory of the dynamics. In this work, we consider a more general case and propose \emph{Unbalanced Synchronized Optimal Transport} (UnSyncOT), a dynamical framework that synchronizes transport-reaction flows across multiple spaces via prescribed correspondences while permitting mass variation. We treat two correspondence models: (I) Monge UnSyncOT, where a geometric embedding pushes forward dynamics to secondary spaces; and (II) Kantorovich UnSyncOT, where synchronization is mediated by a Markov kernel. In both cases we show that UnSyncOT reduces to a single-space problem: the Monge model becomes a Benamou-Brenier problem with a metric-modified kinetic energy, and the Kantorovich model yields a nonlocal action induced by the synchronization operator, both of which admit a natural dissipation-distance (Onsager) formulation. For the pure transport (Wasserstein) and pure reaction (Fisher-Rao) limits, we also analyze some structural properties such as constant-speed geodesics and bound estimates. For computation, we introduce an HK-based trapezoidal quadrature for the secondary action in the Kantorovich case and discretize both formulations on staggered grids, leading to finite-dimensional convex problems solved by primal-dual algorithms. Numerical experiments validate convergence, stability, and efficiency, and demonstrate coherent trajectory reconstructions across spaces.

\section{Notations and Background of Dynamical Optimal Transport} \label{section:Overview}

\subsection{Useful Notations}\label{subsection:notation}

In this paper, the spatial domain of interest is a convex and open subset $X\subseteq\mathbb{R}^m$, on which the natural Euclidean norm is denoted by $\|\cdot\|$. The set of Borel measures (respectively, nonnegative Borel measures) on $X$ is denoted by $\mathcal{M}(X)$(respectively, $\mathcal{M}_+(X)$), or sometimes simply by $\mathcal{M}$ (respectively, $\mathcal{M}_+$). The set of probability measures on $X$ is denoted by $\mathcal{P}(X)$, and the set of probability measures on $X$ with finite $p$-moment is denoted by $\mathcal{P}_p(X)$. We use $d\bx$ to denote the standard Lebesgue measure. Throughout the paper, we use a mild abuse of notation and identify all probability measures with their densities: $\rho(d\bx) = d\rho(\bx) = \rho(\bx)d\bx$, and $d\pi(\bx,\by)=\pi(d\bx, d\by) = \pi(\bx,\by)d\bx d\by, \pi(\bx,d\by) = \pi(\by|\bx)d\by, \pi(d\bx,\by) = \pi(\bx|\by)d\bx$. 

Notation $\mu\ll \nu$ means measure $\mu$ is absolutely continuous with respect to $\nu$. Throughout the paper, we assume $\mu, \nu \in \mathcal{M}_+(X)$ are absolutely continuous with respect to $d\bx$, and we refer to them simply as absolutely continuous measures. We also assume $\rho_t: [0,1]\rightarrow \mathcal{M}_+(X)$ is an absolutely continuous curve of measures. Given a Borel map $\mathbf{T}:X\rightarrow Y$, $\bT_{\sharp}$ denotes the pushforward operator, which maps $\mu\in\mathcal{M}_+(X)$ to $\bT_{\sharp}\mu = \mu\circ\mathbf{T}^{-1}$. For any (possibly vector-valued) function $f(\bx,t)$, we will interchangeably use $f, f_t, f_t(\bx), f(\bx,t)$ to denote it, depending on which notation is more appropriate in context. Indicator function of a convex set $\mathcal{C}$ is denoted by $\iota_{\mathcal{C}}$, which takes the value 0 on $\mathcal{C}$ and $\infty$ everywhere else.

Since the paper involves two synchronized spaces $X$ and $Y$, we will always assume a convex and open subset (flat) $X\subseteq\mathbb{R}^m$. For space $Y$, we distinguish two different cases. In Case I, we assume $Y$ is a convex and open subset (flat) $Y\subset\mathbb{R}^n$. This case will apply to defining Monge (\ref{eqn:MongeOT}) and Kantorovich (\ref{eqn:KantorovichOT}) OT problems, and the Kantorovich UnSyncOT (\ref{eqn:KantorovichUnSynOT}), and the Kantorovich synchronized Fisher-Rao problem (\ref{eqn:FR_SynOT_2Spaces}). In Case II, we need to introduce a $C^1$-embedding from $\mathbb{R}^m$ to $\mathbb{R}^n (m\le n)$, denoted by $\mathbf{T}: X \rightarrow Y = \bT(X)$ such that $Y$, as the image of $X$, is a $C^1$-embedded (curved) $m$-dimensional submanifold of $\mathbb{R}^n$. This case will be apply to Monge UnSyncOT (\ref{eqn:SynOT_2Spaces}) and Monge synchronized Fisher-Rao problem (\ref{eqn:FR_SynOT_2Spaces}). Indeed, $\mathbf{T}$, as a $C^1$-embedding, is a map $\bT: \mathbb{R}^m \rightarrow \bT(\mathbb{R}^m)$. In this paper, we mainly focus on the case in which $\bT$ is restricted on a convex and open subset $X\subset \mathbb{R}^m$, such that $Y = \bT(X)$ is relatively open to $\bT(\mathbb{R}^m)$. 

The induced Riemannian metric of $Y$ is denoted by $g$. We denote the resulting Riemannian manifold by $(Y,g)$, with the geodesic distance
\begin{align}\label{eqn:geodesic_distance}
    d^2_g(\by,\by'): = \inf_{\gamma}\left\{ \int_0^1 |\dot{\gamma}(t)|^2_g \ dt \ \middle|\  \gamma\in C^1((0,1);Y), \gamma(0) = \by, \gamma(1) = \by' \right\}.
\end{align}

Now we need to briefly review the induced Riemannian metric $g$, and the differential operators on $(Y,g)$.
Using the local coordinates $\bx = (x^1,\cdots, x^m)$, the coordinate tangent basis on tangent space $T_{\by}Y$ is given by 
\begin{align*}
    E_{i}:= \partial_{x^i}\mathbf{T}(\bx) \in T_{\by}Y, \quad \by = \bT(\bx), \quad i=1,\cdots,m.
\end{align*}
The ambient Euclidean inner product on $\mathbb{R}^n$, when restricted to each tangent space $T_\by Y$, defines the induced Riemannian metric $g_{\by}$ on $Y$:
\begin{align*}
    &g_{ij}:= \langle E_i, E_j \rangle_{\mathbb{R}^n}, \quad 
    g:= \big(g_{ij}\big)_{ij}, \quad (g^{ij}) = (g_{ij})^{-1} \\
    & g_\by(\cdot,\cdot): T_\by Y\times T_\by Y \rightarrow \mathbb{R}, \ g_\by(\bv,\bw) = v^i w^j g_{ij}, \mathrm{\ for\ } \bv = v^i E_i, \bw = w^i E_i, \\
    & \|\bv\|_g^2 := g_\by(\bv,\bv),
\end{align*}
where the standard Einstein notation is adopted.
We also denote 
\begin{align}\label{eqn:G_norm}
    G(\bx) := \big(\nabla\mathbf{T}(\bx)\big)^{\mathrm{T}}\nabla\mathbf{T}(\bx), \quad \|\bu\|_G^2:= \bu^\mathrm{T} G \bu , \ \bu\in\mathbb{R}^m.
\end{align}
Given $\bu(\bx)\in T_\bx X$, we have $\bv(\bT(\bx)):= (\nabla\bT)\bu(\bx) = \sum_{i=1}^m u^iE_i\in T_{\bT(\bx)}Y$, and the two norms $\|\cdot\|_g$ and $\|\cdot\|_G$ are related by
\begin{align}\label{eqn: two_norms}
    \|\bv(\bT(\bx))\|_g^2 = u^i u^j g_{ij} = u^i u^j \langle E_i, E_j \rangle_{\mathbb{R}^n} = \| (\nabla\bT)\bu(\bx) \|^2_{\mathbb{R}^n} = \|\bu(\bx)\|_G^2.
\end{align}

The Riemannian volume on $Y$ is given as
\begin{align*}
    d\mathrm{vol}_Y(\by) = \sqrt{\mathrm{det}\, g(\bx)} \ d\bx,
\end{align*}
which is equivalent to the Hausdorff surface measure $d\mathcal{H}^m$ on $Y$. Given a smooth function $f$, the gradient on $Y$ is defined as
\begin{align}
    \nabla_Y f\big(\mathbf{T}(\bx)\big) = \sum_{i} \Big( \sum_{j} g^{ij}(\bx)\partial_{x^j}(f\circ\mathbf{T})(\bx) \Big)E_i(\bx).
\end{align}
Given tangent vector field $W(\by)\in T_{\by}Y$ under coordinate tangent basis $W(\mathbf{T}(\bx)) = W^{i}(\bx)E_i(\bx)$, the divergence operator on $Y$ is defined as
\begin{align}
    \mathrm{div}_Y W(\mathbf{T}(\bx)) = \frac{1}{\sqrt{\det g(\bx)}}\partial_{x^i}\Big(\sqrt{\det g(\bx)}\ W^i(\bx)\Big).
\end{align}
The gradient and divergence operators on $Y$ satisfy the integration-by-part formula
\begin{align}
    \int_Y \phi\ \mathrm{div}_Y W d\mathrm{vol}_Y = -\int_Y \langle \nabla_Y\phi, W\rangle_g d\mathrm{vol}_Y, \quad \forall \phi\in C_{c}^{\infty}(Y).
\end{align}

While in the curved space $Y = \bT(X)$ we denote the gradient and divergence operators by $\nabla_Y$ and $\div_Y$, we will still use the standar notation $\nabla$ and $\nabla\cdot$ for these operators in the flat space $X$.

\subsection{Review of Optimal Transport Theory}

Let $X\subseteq\mathbb{R}^m$ and $Y\subseteq\mathbb{R}^n$ be convex and open subsets of Euclidean spaces. Given two probability measures $\bar{\rho}_0\in\mathcal{P}(X), \bar{\rho}_1\in\mathcal{P}(Y)$, and $c: X \times Y \rightarrow [0,\infty]$ lower semi-continuous, the Monge and Kantorovich OT problems are defined as follows \cite{figalli2021invitation}:
\begin{align}
&C_{\mathrm{M}}(\bar{\rho}_0,\bar{\rho}_1) = \inf_{\bT}\left\{ \int_{X} c(\mathbf{x},\mathbf{T}(\mathbf{x})) \ d\bar{\rho}_0(\mathbf{x})\ |\  {\mathbf{T}_{\sharp} \bar{\rho}_0 = \bar{\rho}_1} \right\}, \label{eqn:MongeOT}\\
&C_{\mathrm{K}}(\bar{\rho}_0,\bar{\rho}_1) = \inf_{\pi}\left\{ \int_{X\times Y} c(\mathbf{x},\mathbf{y}) \ d\pi(\mathbf{x},\mathbf{y})\ |\  \pi \in \Gamma(\bar{\rho}_0,\bar{\rho}_1) \right\}. \label{eqn:KantorovichOT}
\end{align}
Here $\mathbf{T}_{\sharp}$ is the pushforward operator, and $c(\mathbf{x},\mathbf{y})$ is the cost function. $\Gamma(\bar{\rho}_0,\bar{\rho}_1)$ is the set of couplings between $\bar{\rho}_0$ and $\bar{\rho}_1$. 

When $X=Y\subseteq\mathbb{R}^m$ are convex and open, and given $\bar{\rho}_{0,1} \in \mathcal{P}_{p}(X)$, their $p$-Wasserstein distance, $W_p(\bar{\rho}_0,\bar{\rho}_1)$, is defined as \cite{figalli2021invitation}
\begin{align}\label{eqn:pWasserstein_flat}
  W_{p}^p(\bar{\rho}_0,\bar{\rho}_1) := \inf_{\pi \in \Gamma(\bar{\rho}_0,\bar{\rho}_1)} \int_{X\times X} \| \mathbf{x}-\mathbf{x}' \|^p\ d\pi(\mathbf{x},\mathbf{x}').
\end{align}
When $p=2$, Benamou and Brenier \cite{benamou2000computational} showed that the squared 2-Wasserstein distance $W_2^2(\bar{\rho}_0,\bar{\rho}_1)$ for $\bar{\rho}_{0,1}\in\mathcal{P}_2(X)$ is equivalent to the following Benamou-Brenier dynamical OT formulation
\begin{align}\label{eq:DynamicalOT_flat}
    {\mathrm{BB}}_X^2(\bar{\rho}_0,\bar{\rho}_1) = \inf_{(\rho_t,\mathbf{u}_t)\in\mathcal{C}_{\mathrm{BB}}^X}  A_{\mathrm{BB}}[\rho_t, \mathbf{u}_t] = \int_0^1\int_{X} \|\mathbf{u}_t(\mathbf{x})\|^2 \rho_t(\bx)  d\bx dt,
\end{align}
where we define the set of constraints for $(\rho_t,\mathbf{u}_t)\in \mathcal{P}_2(X)\times TX$ as
\begin{align}\label{eqn:feasible_BB_flat}
  \mathcal{C}^X_{\mathrm{BB}}(\bar{\rho}_0,\bar{\rho}_1):= 
  \left\{ (\rho_t,\mathbf{u}_t) \ \middle| 
  \begin{array}{l}
  \partial_t\rho_t + \nabla\cdot(\rho_t \bu_t)=0 \mathrm{\ in\ } \mathcal{D}'(X), \ \rho_{0,1}=\bar{\rho}_{0,1}, \ \mathbf{u}_t\cdot \mathbf{n}|_{\partial X} = 0
  \end{array}
  \right\}.
\end{align}
Here we adopt the notation $T X$ to represent the trivial tangent bundle of the flat $X$, in other words, $u_t$ is a time-dependent vector field on $X$.

When $X=Y\subseteq\mathbb{R}^m$ are $m$-dimensional Riemannian manifold $(X,g)$, and given $\bar{\rho}_{0,1} \in \mathcal{P}_{p}(X)$, their $p$-Wasserstein distance is defined as \cite{figalli2021invitation}
\begin{align}\label{eqn:pWasserstein_curved}
  W_{p}^p(\bar{\rho}_0,\bar{\rho}_1) := \inf_{\pi \in \Gamma(\bar{\rho}_0,\bar{\rho}_1)} \int_{X\times X} d_g(\mathbf{x},\mathbf{x}')^p\ d\pi(\mathbf{x},\mathbf{x}').
\end{align}
in which $d_g(\bx,\bx')$ is the geodesic distance on $(X,g)$ as defined in (\ref{eqn:geodesic_distance}).
When $p=2$, Otto and Villani \cite{otto2000generalization} showed that the Benamou-Brenier equivalence also holds between the 2-Wasserstein distance $W_2^2(\bar{\rho}_0,\bar{\rho}_1)$ on manifold for $\bar{\rho}_{0,1}\in\mathcal{P}_2(X)$ and the dynamical OT on manifold
\begin{align}\label{eq:DynamicalOT_curved}
    {\mathrm{BB}}_X^2(\bar{\rho}_0,\bar{\rho}_1) = \inf_{(\rho_t,\mathbf{u}_t)\in\mathcal{C}^X_{\mathrm{BB}}}  A_{\mathrm{BB}}[\rho_t, \mathbf{u}_t] = \int_0^1\int_{X} \|\mathbf{u}_t(\mathbf{x})\|^2 \rho_t(\bx)  d\mathrm{vol}_X dt,
\end{align}
where we define the set of constraints for $(\rho_t,\mathbf{u}_t)\in \mathcal{P}_2(X)\times TX$ as
\begin{align}\label{eqn:feasible_BB_curved}
  \mathcal{C}^X_{\mathrm{BB}}(\bar{\rho}_0,\bar{\rho}_1)= 
  \left\{ (\rho_t,\mathbf{u}_t) \ \middle| 
  \begin{array}{l}
  \partial_t\rho_t + \div_X(\rho_t \bu_t)=0 \mathrm{\ in\ } \mathcal{D}'(X), \ \rho_{0,1}=\bar{\rho}_{0,1}, \langle \mathbf{u}_t, \mathbf{n}|_{\partial X} \rangle_g = 0
  \end{array}
  \right\}.
\end{align}
Here with abuse of notation, we use $W_p$, $\mathrm{BB}_X$, $A_{\mathrm{BB}}$, $\mathcal{C}_{\mathrm{BB}}^X$ both in the Euclidean and Riemannian settings. The intended meaning will be clear from context.

Using Otto's calculus, the dynamical OT (\ref{eq:DynamicalOT_flat}) (similarly for (\ref{eq:DynamicalOT_curved})) can be also reformulated in terms of the Wasserstein metric and Wasserstein norm
\begin{align}\label{eqn:DynamicalOT02}
  {\mathrm{BB}}_X^2(\bar{\rho}_0,\bar{\rho}_1) = \inf_{\rho_t} \left\{ \int_0^1 \left\|\partial_t\rho_t \right\|_{-1,{\rho_t}}^2 dt \ \middle | \ (\rho_t)_{t\in[0,1]}\in AC([0,1];\mathcal{P}_2(X)),\,\rho_{0,1} = \bar{\rho}_{0,1} \right\}.
\end{align}
The Wasserstein norm of the derivative $\partial_t\rho$ at $\rho_t(\cdot)$ in \eqref{eqn:DynamicalOT02} is defined as a minimization
\begin{align}\label{eqn:WassersteinNorm}
    \|\partial_t\rho_t\|_{-1,\rho_t}^2 : = \inf_{\mathbf{u}_t} 
    \left\{ \int_{X} \|\mathbf{u}_t(\bx)\|^2 \rho_t(\bx) d\bx: -\nabla\cdot(\rho_t\mathbf{u}_t)=\partial_t\rho_t, \ \mathbf{u}_t\cdot\mathbf{n}|_{\partial X} = 0 \right\}.
\end{align}
There are some other notations in the field of OT for the Wasserstein norm such as $\|\cdot\|_{\rho_t}$, $\|\cdot\|_{T_{\rho_t}}$.
Denoting by $(-\Delta_{\rho})^{-1}h$ the Wasserstein potential $\psi_h$ associated with a generic source term $h$,
\begin{align*}
    \begin{cases}
    (-\Delta_{\rho})\psi_h: = -\nabla\cdot(\rho \nabla \psi_h) = h & \mathrm{in } X, \\
    \frac{\partial \psi_h}{\partial \bn} = 0 & \text{on } \partial X,
    \end{cases}
\end{align*}
the Wasserstein norm has an alternative notation $\|h\|_{-1,{\rho}}^2 = \langle (-\Delta_{\rho})^{-1}h, h \rangle_{L^2} = \|(-\Delta_{\rho})^{-\frac{1}{2}}h\|^2$, therefore
\begin{align*}
   W_2^2(\bar{\rho}_0,\bar{\rho}_1) =  {\mathrm{BB}}_X^2(\bar{\rho}_0,\bar{\rho}_1)
    = \inf_{\rho_t \in \Gamma(\bar{\rho}_0,\bar{\rho}_1)} \int_0^1 \|(-\Delta_{\rho_t})^{-\frac{1}{2}} \partial_t\rho_t\|^2 dt.
\end{align*}

For later use, we need to extend $W_2$ over flat $X$ (\ref{eqn:pWasserstein_flat}) into the one with $\bA$-norm,
\begin{align}\label{eqn:pWasserstein_flat_A}
  W_{2,\bA}^2(\bar{\rho}_0,\bar{\rho}_1) := \inf_{\pi \in \Gamma(\bar{\rho}_0,\bar{\rho}_1)} \int_{X\times X} d_\bA(\bx,\bx')^2  d\pi(\mathbf{x},\mathbf{x}') := \int_{X\times X}   \| \mathbf{x}-\mathbf{x}' \|_\bA^2 d\pi(\mathbf{x},\mathbf{x}')
\end{align}
where $\bA\in\bbR^{m\times m}$ is symmetric positive definite and $\|\cdot\|_\bA$ is defined as in (\ref{eqn:G_norm}). For the Benamou-Brenier dynamical OT (\ref{eq:DynamicalOT_flat}) on flat $X$, we can also extend it into one with $\bA$-norm,
\begin{align}\label{eq:DynamicalOT_flat_A}
    {\mathrm{BB}}_{X,\bA}^2(\bar{\rho}_0,\bar{\rho}_1) := \inf_{(\rho_t,\mathbf{u}_t)\in\mathcal{C}_{\mathrm{BB}}^X(\bar{\rho}_0,\bar{\rho}_1)}  A_{\mathrm{BB},\bA}[\rho_t, \mathbf{u}_t] = \int_0^1\int_{X} \|\mathbf{u}_t(\mathbf{x})\|_{\bA}^2 \rho_t(\bx)  d\bx dt.
\end{align}
Let $\bS$ be a $C^1$-embedding $\bS: X \rightarrow Z = \bS(X)\subseteq\bbR^n$, then $(Z,g)$ is a $C^1$-embedded $m$-dimensional Riemannian submanifold of $\bbR^n$ with induced metric $g$, we can consider $\bS$-induced 2-Wasserstein distance $W_2(\bS_{\sharp}\bar{\rho}_0,\bS_\sharp\bar{\rho}_1)$ on curved $Z$ as defined in (\ref{eqn:pWasserstein_curved}), and $\bS$-induced Benamou-Brenier dynamical OT $\mathrm{BB}_Z(\bS_{\sharp}\bar{\rho}_0,\bS_\sharp\bar{\rho}_1)$ as defined in (\ref{eq:DynamicalOT_curved}). We have the following lemma for their relations \cite{villani2009optimal,pooladian2023neural}.

\begin{lemma}\label{lemma:W2_Anorm}
    Let $X\subseteq\bbR^m$ be open and convex, $\bS$ be a $C^1$-embedding $\bS: X \rightarrow Z = \bS(X)\subseteq\bbR^n$, and $\bA = (\nabla\bS)^\rmT(\nabla\bS)$, then we have
    \begin{align}
        W_{2,\mathbf{A}}(\bar{\rho}_0,\bar{\rho}_1) 
        =\mathrm{BB}_{X,\mathbf{A}}(\bar{\rho}_0,\bar{\rho}_1)= \mathrm{BB}_{Z}(\mathbf{S}_{\sharp}\bar{\rho}_0, \mathbf{S}_{\sharp}\bar{\rho}_1) 
        = W_2({\mathbf{S}}_{\sharp}\bar{\rho}_0, {\mathbf{S}}_{\sharp}\bar{\rho}_1).
    \end{align}
\end{lemma}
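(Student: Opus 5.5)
The plan is to prove the chain of equalities through the fact that $\bS$ is an isometry. Regard $X$ as the Riemannian manifold $(X,\bA)$ with $\bA(\bx)=(\nabla\bS(\bx))^\rmT\nabla\bS(\bx)$. Then $\bS:(X,\bA)\to(Z,g)$ is a $C^1$ diffeomorphism onto its image that pulls $g$ back to $\bA$; this is the content of \eqref{eqn: two_norms}.

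Since $\bA$ depends on $\bx$, I read $d_\bA(\bx,\bx')$ in \eqref{eqn:pWasserstein_flat_A} as the Riemannian distance of $(X,\bA)$ defined by \eqref{eqn:geodesic_distance}. It reduces to $\|\bx-\bx'\|_\bA$ only when $\bA$ is constant. The first step is to show $d_\bA(\bx,\bx')=d_g(\bS\bx,\bS\bx')$. Every $C^1$ curve $\gamma$ in $X$ gives a curve $\bS\circ\gamma$ in $Z$. By \eqref{eqn: two_norms},
\begin{align*}
|(\bS\circ\gamma)'|_g^2=\|\dot\gamma\|_\bA^2 .
\end{align*}
Conversely, every $C^1$ curve in $Z$ equals $\bS\circ\gamma$ for $\gamma=\bS^{-1}\circ(\text{curve})$, which is $C^1$ because $\bS$ is an embedding. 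So the two infima coincide.

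Next I show $W_{2,\bA}(\bar\rho_0,\bar\rho_1)=W_2(\bS_\sharp\bar\rho_0,\bS_\sharp\bar\rho_1)$. The map $\pi\mapsto(\bS\times\bS)_\sharp\pi$ is a bijection from $\Gamma(\bar\rho_0,\bar\rho_1)$ onto $\Gamma(\bS_\sharp\bar\rho_0,\bS_\sharp\bar\rho_1)$, since $\bS$ has a Borel inverse on $Z$. It preserves the cost by the first step.

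For the dynamic side, I show $\mathrm{BB}_{X,\bA}=\mathrm{BB}_Z$ by transporting admissible pairs. Given $(\rho_t,\bu_t)\in\mathcal{C}^X_{\mathrm{BB}}$, set $\tilde\rho_t=\bS_\sharp\rho_t$ and $\tilde\bu_t=(\nabla\bS\,\bu_t)\circ\bS^{-1}$. To check the continuity equation on $Z$ weakly, take $\phi\in C_c^\infty(Z)$ and test the $X$-equation with $\phi\circ\bS$. The chain rule gives
\begin{align*}
\nabla(\phi\circ\bS)\cdot\bu=\langle\nabla_Z\phi,\tilde\bu\rangle_g\circ\bS ,
\end{align*}
using the formula for $\nabla_Y$ with $g^{ij}$. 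Integrating against $\rho_t\,d\bx$ then becomes integration against $\tilde\rho_t$ on $Z$. I will work throughout with measures rather than densities, so the $\sqrt{\det g}$ factor never appears explicitly. The no-flux conditions correspond because $\nabla\bS$ maps the normal structure consistently. The kinetic energies agree by \eqref{eqn: two_norms}. The reverse map uses $\bS^{-1}$, so the two infima are equal.

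Finally, $\mathrm{BB}_Z=W_2$ on $Z$ is the Riemannian Benamou--Brenier theorem of Otto--Villani cited before \eqref{eq:DynamicalOT_curved}. Combining the three steps gives the lemma; alternatively, I can apply Benamou--Brenier directly on $(X,\bA)$ and use the isometry once.

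The main obstacle is this last invocation. $Z$ is relatively open and generally neither complete nor geodesically convex, so minimizing geodesics may leave $Z$ or fail to exist. The standard theorem is stated for complete manifolds or convex Euclidean domains. My first attempt would be to run the standard proof directly. The inequality $W_2\le\mathrm{BB}$ comes from superposition of solutions of the continuity equation into curves, which works on any manifold. The reverse inequality comes from approximating near-optimal couplings by curves of length close to $d_g$, and these curves stay inside $Z$ by the definition of $d_g$ as an infimum over curves in $Z$. If the measures are not compactly supported, I will also need a truncation and approximation argument for the near-optimal plans.
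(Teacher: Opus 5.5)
Your proposal is correct and follows essentially the same route as the paper: the curve-by-curve identity $d_\bA(\bx,\bx')=d_g(\bS(\bx),\bS(\bx'))$ from the norm identity $\|\nabla\bS\,\bu\|_g=\|\bu\|_\bA$, pushing couplings forward by $\bS\times\bS$, a one-to-one correspondence of admissible pairs $(\rho_t,\bu_t)\leftrightarrow(\bS_\sharp\rho_t,(\nabla\bS\,\bu_t)\circ\bS^{-1})$ with equal actions, and the Benamou--Brenier/Otto--Villani theorem on $Z$. Your extra care goes beyond the paper in three places: reading $d_\bA$ as the Riemannian distance of $(X,\bA)$, checking both directions of the curve and coupling correspondences, and flagging that $Z$ may be incomplete so the cited theorem needs justification there; the paper simply invokes that theorem without addressing completeness.
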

\begin{proof}
    We first prove that $W_{2,\mathbf{A}}(\bar{\rho}_0,\bar{\rho}_1) = W_2({\mathbf{S}}_{\sharp}\bar{\rho}_0, {\mathbf{S}}_{\sharp}\bar{\rho}_1)$. Given the Riemannian manifold $(Z,g)$, we equip $X$ with the pullback metric $\bA:=(\nabla\bS)^\rmT(\nabla\bS)$. Consider every $C^1$-curve $\gamma$ connecting $\bx$ and $\bx'$ in $(X,\bA)$. Due to identity (\ref{eqn: two_norms}), we have $\|\frac{d}{dt}\gamma\|_\bA = \|\frac{d}{dt}\bS(\gamma)\|_g$. Therefore $d_\bA(\bx,\bx') = d_g(\bS(\bx),\bS(\bx'))$.
    For any coupling $\pi \in \Gamma(\bar{\rho}_0,\bar{\rho}_1)$, we have
    \begin{align*}
        \int_{X\times X} d_\bA(\bx,\bx')^2 d\pi = \int_{X\times X} d_g(\bS(\bx),\bS(\bx'))^2 d\pi 
        =\int_{Z\times Z} d_g(\bz,\bz')^2 d((\bS\times\bS)_\sharp \pi).
    \end{align*}
    Minimizing over the coupling $\pi$ on two sides, it follows that $W_{2,\mathbf{A}}(\bar{\rho}_0,\bar{\rho}_1) = W_2({\mathbf{S}}_{\sharp}\bar{\rho}_0, {\mathbf{S}}_{\sharp}\bar{\rho}_1)$. In other words, $\bS_\sharp: (\mathcal{P}_2(X),W_{2,\bA})\rightarrow (\mathcal{P}_2(Z), W_2)$ is a bijective isometry.

    By the Benamou-Brenier theory \cite{benamou2000computational,otto2000generalization}, we have $\mathrm{BB}_{Z}(\mathbf{S}_{\sharp}\bar{\rho}_0, \mathbf{S}_{\sharp}\bar{\rho}_1) 
    = W_2({\mathbf{S}}_{\sharp}\bar{\rho}_0, {\mathbf{S}}_{\sharp}\bar{\rho}_1)$. So it remains to prove $\mathrm{BB}_{X,\mathbf{A}}(\bar{\rho}_0,\bar{\rho}_1) = \mathrm{BB}_{Z}(\mathbf{S}_{\sharp}\bar{\rho}_0, \mathbf{S}_{\sharp}\bar{\rho}_1)$.

    Define the feasible set of $\mathrm{BB}_Z(\bS_\sharp\bar{\rho}_0,\bS_\sharp\bar{\rho}_1)$ for $(\eta_t,\mathbf{w}_t)\in \mathcal{P}_2(Z)\times T Z$ as 
    \begin{align}\label{eqn:C_BB^Z}
      \mathcal{C}^Z_{\mathrm{BB}}:= 
      \left\{ (\eta_t,\mathbf{w}_t) \ \middle| 
      \partial_t\eta_t + \div_Z(\eta_t \bw_t)=0 \mathrm{\ in\ } \mathcal{D}'(Z), \ \eta_{0,1}=\bS_{\sharp}\bar{\rho}_{0,1}, \  \langle \mathbf{w}_t, \mathbf{n}|_{\partial Z} \rangle_g = 0
      \right\}.
    \end{align}
    Then the following maps define a one-to-one correspondence between feasible pairs $(\rho_t,\mathbf{u}_t)\in \mathcal{C}^X_{\mathrm{BB}}$ and $(\eta_t,\bw_t)\in \mathcal{C}^Z_{\mathrm{BB}}$,
    \begin{align}
        (\mathrm{forward}): &\ \mathbf{S}_{\sharp}(\rho_t d\bx) = \eta_t d\mathrm{vol}_Z,\ \mathbf{w}_t(\mathbf{S}(\bx))=\nabla \mathbf{S}(\bx)\mathbf{u}_t(\bx)\in T_{\mathbf{S}(\bx)}Z, \label{eqn:forward_map_XZ}\\
        (\mathrm{backward}):&\ (\mathbf{S}^{-1})_{\sharp}(\eta_t d\mathrm{vol}_Z) = \rho_t d\bx, \ \mathbf{u}_t(\bx)= (\nabla \mathbf{S}(\bx))^{\dagger}\mathbf{w}_t(\mathbf{S}(\bx)), \label{eqn:backward_map_XZ}
    \end{align}
    with $(\nabla \mathbf{S}(\bx))^{\dagger}: = \bA(\bx)^{-1}(\nabla \mathbf{S}(\bx))^{\mathrm{T}}$, and the following actions coincide:
    \begin{align}\label{eqn:Yaction_equal_Xaction_Anorm}
        \int_0^1\int_Z \|\mathbf{w}_t(\bz)\|_g^2 (\eta_t d\mathrm{vol}_Z) dt = 
        \int_0^1\int_X \|\mathbf{w}_t(\bS(\bx))\|_g^2 (\rho_t d\bx) dt =
        \int_0^1\int_X \|\mathbf{u}_t(\bx)\|_{\bA}^2 (\rho_t d\bx) dt,
    \end{align}
    in which the last equation is due to (\ref{eqn: two_norms}). Minimizing over the corresponding feasible sets on two sides, the desired identity holds: $\mathrm{BB}_{X,\mathbf{A}}(\bar{\rho}_0,\bar{\rho}_1)= \mathrm{BB}_{Z}(\mathbf{S}_{\sharp}\bar{\rho}_0, \mathbf{S}_{\sharp}\bar{\rho}_1)$.
\end{proof}

The dynamical OT (\ref{eq:DynamicalOT_flat}) (similarly the manifold case (\ref{eq:DynamicalOT_curved})) has been extended to unbalanced dynamical OT, independently by three groups \cite{chizat2018unbalanced, liero2016optimal,liero2018optimal,kondratyev2016new}, which is now called  Wasserstein-Fisher-Rao (WFR) distance. Following the notation in \cite{liero2016optimal}, unbalanced dynamical OT in WFR form introduces a growth term as follows
\begin{align}\label{eqn:WFR}
  {\mathrm{WFR}^2_{\alpha,\beta}}(\bar{\rho}_0,\bar{\rho}_1) = \inf_{(\rho,\mathbf{u},g)\in\mathcal{C}_{\mathrm{WFR}_{\alpha,\beta}}} A_{\mathrm{WFR}_{\alpha,\beta}}[\rho,\mathbf{u},g] = \int_0^1 \int_{X}\left(\alpha\|\mathbf{u}_t\|^2 + \beta |g_t|^2 \right) \rho_t d\bx dt,
\end{align}
in which the feasible set of $(\rho,\mathbf{u},g)\in \mathcal{P}_2(X)\times TX \times C^1(X)$ is defined as
\begin{align}\label{eqn:feasible_WFR_flat}
  \mathcal{C}_{\mathrm{WFR}_{\alpha,\beta}}(\bar{\rho}_0,\bar{\rho}_1) = 
  \left\{ (\rho,\mathbf{u},g) \ \middle| 
  \partial_t\rho + \alpha\nabla\cdot(\rho \bu)=
  \beta\rho g \mathrm{\ in\ } \mathcal{D}'(X), \ \rho_{0,1}=\bar{\rho}_{0,1}, \mathbf{u}_t\cdot \mathbf{n}|_{\partial X} = 0
  \right\}.
\end{align}
Here $\alpha$ and $\beta$ are weights controlling the trade-off between mass transportation and mass creation/destruction, and $\rho_t(\cdot): (0,1)\rightarrow\mathcal{M}_+(X)$ is an absolutely continuous and unnormalized measure curve, since the growth term does not guarantee the mass conservation $\int_X \partial_t\rho_t d\bx = 0$. With abuse of notation, we use $g$ both for Riemannian metric and the growth term. But the intended meaning is clear from context.

The WFR formulation (\ref{eqn:WFR}) is equivalent to \cite{liero2016optimal}
\begin{align*}
   \inf\limits_{(\rho,g)} \left\{ \int_0^1 \int_{X} (\alpha\|\nabla g\|^2 + \beta |g|^2) \rho d\bx dt \ \middle| \ \partial_t\rho + \alpha\nabla\cdot(\rho\nabla g) = \beta\rho g,  \rho_{0,1} = \bar{\rho}_{0,1}, \nabla g\cdot \mathbf{n}|_{\partial X} = 0 \right\}.
\end{align*}
Namely, the optimal solution $(\rho^*,\mathbf{u}^*,g^*)$ of the WFR dynamical OT (\ref{eqn:WFR}) satisfies $\mathbf{u}^* = \nabla g^*$.

Unbalanced dynamical OT can also be formulated as in \cite{chizat2018interpolating} using one trade-off parameter $\lambda$,
\begin{align}\label{eqn:WFR02}
  {\mathrm{WFR}^2_{\lambda}}(\bar{\rho}_0,\bar{\rho}_1) = \inf_{(\rho,\mathbf{v},h)\in\mathcal{C}_{\mathrm{WFR}_{\lambda}}} A_{\mathrm{WFR}_{\lambda}}[\rho,\mathbf{v},h] = \int_0^1 \int_{X} (\|\mathbf{v}_t\|^2 + \lambda |h_t|^2) \rho_t d\bx dt,
\end{align}
in which the feasible set of $(\rho,\mathbf{v},h)\in \mathcal{P}_2(X)\times TX \times C^1(X)$ is defined as
\begin{align}\label{eqn:feasible_WFR_flat_02}
  \mathcal{C}_{\mathrm{WFR}_{\lambda}}(\bar{\rho}_0,\bar{\rho}_1) = 
  \left\{ (\rho,\mathbf{v},h) \ \middle| \ 
  \partial_t\rho + \nabla\cdot(\rho \bv)=
  \rho h \mathrm{\ in\ } \mathcal{D}'(X), \ \rho_{0,1}=\bar{\rho}_{0,1}, \mathbf{v}_t\cdot \mathbf{n}|_{\partial X} = 0
  \right\}.
\end{align}

\begin{lemma}
    The two WFR formulations (\ref{eqn:WFR}) and (\ref{eqn:WFR02}) of the unbalanced dynamical optimal transport are equivalent in the sense that 
    \begin{align}
    {\mathrm{WFR}^2_{\alpha/\beta}} = \alpha {\mathrm{WFR}^2_{\alpha,\beta}}
    \end{align}
\end{lemma}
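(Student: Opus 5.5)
The plan is to build an explicit bijection between the two feasible sets, $\mathcal{C}_{\mathrm{WFR}_{\alpha,\beta}}(\bar{\rho}_0,\bar{\rho}_1)$ and $\mathcal{C}_{\mathrm{WFR}_{\lambda}}(\bar{\rho}_0,\bar{\rho}_1)$ with $\lambda=\alpha/\beta$, under which the two actions differ only by the constant factor $\alpha$. Taking infima on both sides then gives the identity immediately.

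Given $(\rho,\mathbf{u},g)\in\mathcal{C}_{\mathrm{WFR}_{\alpha,\beta}}$, set
\begin{align*}
\mathbf{v}:=\alpha\mathbf{u},\qquad h:=\beta g,
\end{align*}
and keep $\rho$ unchanged. Then:
\begin{itemize}
\item The constraint $\partial_t\rho+\alpha\nabla\cdot(\rho\mathbf{u})=\beta\rho g$ becomes exactly $\partial_t\rho+\nabla\cdot(\rho\mathbf{v})=\rho h$ in $\mathcal{D}'(X)$.
\item The boundary condition $\mathbf{v}\cdot\mathbf{n}=\alpha\,\mathbf{u}\cdot\mathbf{n}=0$ is preserved, since $\alpha>0$.
\item The endpoint conditions on $\rho$ are untouched.
\end{itemize}
The inverse map is $\mathbf{u}=\mathbf{v}/\alpha$, $g=h/\beta$, which is well defined because $\alpha,\beta>0$. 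So this is a bijection between the feasible sets.

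For the actions, substitute directly:
\begin{align*}
A_{\mathrm{WFR}_{\lambda}}[\rho,\mathbf{v},h]=\int_0^1\!\!\int_X\big(\alpha^2\|\mathbf{u}\|^2+\tfrac{\alpha}{\beta}\beta^2|g|^2\big)\rho\,d\bx\,dt
=\alpha\int_0^1\!\!\int_X\big(\alpha\|\mathbf{u}\|^2+\beta|g|^2\big)\rho\,d\bx\,dt=\alpha A_{\mathrm{WFR}_{\alpha,\beta}}[\rho,\mathbf{u},g].
\end{align*}
Since the map is a bijection of feasible sets and $\alpha>0$ is constant, taking the infimum over corresponding feasible triples gives $\mathrm{WFR}^2_{\alpha/\beta}=\alpha\,\mathrm{WFR}^2_{\alpha,\beta}$. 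This includes the case where both sides equal $+\infty$ because the feasible sets are empty.

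There is no real obstacle here. The only care needed is to check that the rescaling respects the function-space requirements on $(\mathbf{u},g)$, such as $g\in C^1$ and the tangency condition. These are clearly invariant under multiplication by positive constants. The choice of which variable carries which factor is also forced: $\lambda=\alpha/\beta$ must multiply $|h|^2$ so that the two terms pick up the same factor $\alpha$.
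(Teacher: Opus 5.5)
Your proposal is correct and uses the same argument as the paper: the change of variables $\mathbf{v}=\alpha\mathbf{u}$, $h=\beta g$, $\lambda=\alpha/\beta$, under which the two actions differ by the factor $\alpha$. You also spell out what the paper's one-line proof leaves implicit: the bijection between feasible sets, including the boundary and endpoint conditions, and the explicit computation of the action.
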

\begin{proof}
    The equivalence can be easily verified by changing the variables
    \begin{align*}
        \mathbf{v} = \alpha \mathbf{u}, \quad h = \beta g, \quad \lambda = \frac{\alpha}{\beta},
    \end{align*}
    in (\ref{eqn:WFR02}), and then resulting in $\alpha {\mathrm{WFR}^2_{\alpha,\beta}}$ from (\ref{eqn:WFR}).
\end{proof}

Another interesting problem related to the WFR dynamical OT is the Fisher-Rao (FR) problem
\begin{align}\label{eqn:FR}
    \mathrm{FR}^2(\bar{\rho}_0,\bar{\rho}_1)=\inf_{(\rho,g)\in\mathcal{C}_{\mathrm{FR}}} A_{\mathrm{FR}}[\rho,g] = \int_0^1\int_X |g_t(\bx)|^2\rho_t(\bx) d\bx dt
\end{align}
with the feasible set
\begin{align}\label{eqn:FR_feasibility}
    \mathcal{C}_{\mathrm{FR}}(\bar{\rho}_0,\bar{\rho}_1):=\Big\{ (\rho,g) \Big| \ \partial_t\rho = \rho g,\ \rho_{0,1} = \bar{\rho}_{0,1} \Big\}.
\end{align}
The optimal solution $(\rho^*,g^*)$ of the FR problem (\ref{eqn:FR}) is explicitly given by \cite{chizat2018interpolating}
\begin{align*}
    \rho^*_t(\bx) = (t\sqrt{\bar{\rho}_1}+(1-t)\sqrt{\bar{\rho}_0})^2, \quad g^*_t(\bx) = \partial_t\rho^*(\bx)/\rho^*(\bx),
\end{align*}
and the corresponding optimal action is equal to the squared Hellinger distance \cite{hellinger1909neue}
\begin{align}\label{eqn:FR_explicitform}
    \mathrm{FR}^2(\bar{\rho}_0,\bar{\rho}_1) = A_{\mathrm{FR}}[\rho^*,g^*] = \int_X 4(\sqrt{\bar{\rho}_1}-\sqrt{\bar{\rho}_0})^2 d\bx := 4\mathrm{dis}^2_\mathrm{H}(\bar{\rho}_0,\bar{\rho}_1).
\end{align}

\begin{lemma}(\cite{chizat2018interpolating, liero2016optimal})
As $\beta\rightarrow 0$ (or equivalently $\lambda\rightarrow \infty$), if $\bar{\rho}_0$ and $\bar{\rho}_1$ are of equal mass, then WFR formula (\ref{eqn:WFR}) degenerates to the balanced dynamical OT (\ref{eq:DynamicalOT_flat}). In contrast, as $\alpha\rightarrow 0$ (or equivalently $\lambda\rightarrow 0$), WFR formula (\ref{eqn:WFR}) approaches the FR distance (\ref{eqn:FR}) in the sense that \cite{chizat2018interpolating}
\begin{align*}
    \lim_{\alpha\rightarrow 0}{\mathrm{WFR}^2_{\alpha,\beta}}(\bar{\rho}_0,\bar{\rho}_1) = {\mathrm{WFR}^2_{0,\beta}}(\bar{\rho}_0,\bar{\rho}_1) = \dfrac{1}{\beta}{\mathrm{FR}}^2(\bar{\rho}_0,\bar{\rho}_1) = \frac{4}{\beta}\mathrm{dis}^2_{\mathrm{H}}(\bar{\rho}_0,\bar{\rho}_1) = \frac{4}{\beta} \int_X (\sqrt{\bar{\rho}_1}-\sqrt{\bar{\rho}_0})^2 d\bx.
\end{align*}
\end{lemma}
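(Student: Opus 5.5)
The plan is to treat the two limits separately. Both rely on simple competitor constructions for the upper bounds and on lower-semicontinuity or monotonicity arguments for the lower bounds. Throughout, write $A_{\alpha,\beta}$ for the action in (\ref{eqn:WFR}). For fixed $\beta$ this action is nondecreasing in $\alpha$, and for fixed $\alpha$ it is nondecreasing in $\beta$; the constraint set, however, also depends on $(\alpha,\beta)$. To remove that dependence, I will first use the change of variables from the previous lemma: set $\mathbf{v}=\alpha\mathbf{u}$ and $h=\beta g$. The constraint then becomes the fixed continuity equation $\partial_t\rho+\nabla\cdot(\rho\mathbf{v})=\rho h$, and the action becomes
\[
\int_0^1\!\!\int_X\Big(\tfrac{1}{\alpha}\|\mathbf{v}\|^2+\tfrac1\beta |h|^2\Big)\rho\,d\bx\,dt .
\]
The feasible set is now independent of the parameters, and the integrand is monotone in $1/\alpha$ and $1/\beta$. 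It follows that $\mathrm{WFR}^2_{\alpha,\beta}$ is nonincreasing as $\alpha$ grows or $\beta$ grows, so both limits exist.

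\emph{Fisher--Rao limit ($\alpha\to0$).} For the lower bound, every feasible triple $(\rho,\mathbf{v},h)$ satisfies
\[
\int_0^1\!\!\int_X\Big(\tfrac1\alpha\|\mathbf{v}\|^2+\tfrac1\beta|h|^2\Big)\rho\,d\bx\,dt\ \ge\ \tfrac1\beta\int_0^1\!\!\int_X |h|^2\rho\,d\bx\,dt .
\]
The right-hand side alone does not control $\mathrm{FR}^2$, because the triple still transports mass. To handle this, I take a sequence of near-optimal triples $(\rho^\alpha,\mathbf{v}^\alpha,h^\alpha)$, which have $\int\!\!\int \|\mathbf{v}^\alpha\|^2\rho^\alpha\le C\alpha\to0$. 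Written in the momentum/source variables $(\rho,\rho\mathbf{v},\rho h)$, the action is jointly convex and lower semicontinuous (a Benamou--Brenier type functional), and the uniform bounds give weak-$*$ compactness of these measures. Along a subsequence the momentum converges to $0$. The limit $(\rho,0,\zeta)$ therefore solves $\partial_t\rho=\zeta$ with the prescribed endpoints, and lower semicontinuity yields
\[
\liminf_{\alpha\to 0}\mathrm{WFR}^2_{\alpha,\beta}\ \ge\ \tfrac1\beta\,\mathrm{FR}^2(\bar\rho_0,\bar\rho_1).
\]
For the upper bound, the pure-growth competitor $\mathbf{v}=0$, $\rho^*_t=(t\sqrt{\bar\rho_1}+(1-t)\sqrt{\bar\rho_0})^2$ is feasible for every $\alpha$ and gives cost $\tfrac1\beta\mathrm{FR}^2$, hence $\le \tfrac4\beta\mathrm{dis}^2_{\mathrm H}$ by (\ref{eqn:FR_explicitform}). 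Finally, $\mathrm{WFR}^2_{0,\beta}$ is interpreted as the problem with $\mathbf{v}$ forced to vanish, and its value equals $\tfrac1\beta\mathrm{FR}^2$ by the explicit FR solution quoted before the statement.

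\emph{Balanced limit ($\beta\to0$, equal masses).} For the upper bound, the Benamou--Brenier optimizer from (\ref{eq:DynamicalOT_flat}) with $h=0$ is feasible and costs $\tfrac1\alpha\mathrm{BB}_X^2$, so in the rescaled form
\[
\mathrm{WFR}^2_{\alpha,\beta}\le \tfrac{1}{\alpha}\,\mathrm{BB}_X^2\quad\text{for every }\beta .
\]
For the lower bound, take near-optimal triples. The source term satisfies $\int\!\!\int |h|^2\rho\le C\beta\to0$, and by Cauchy--Schwarz the total variation of the source is
\[
\int\!\!\int |h|\rho\ \le\ \Big(\int\!\!\int |h|^2\rho\Big)^{1/2}\Big(\int\!\!\int\rho\Big)^{1/2}.
\]
This tends to $0$ once total mass is bounded; mass is bounded by Grönwall's inequality applied to $\tfrac{d}{dt}\int\rho=\int\rho h$. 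The same compactness and lower semicontinuity argument then produces a limit pair solving the balanced continuity equation with cost at most $\liminf$. This gives $\liminf\ge\tfrac1\alpha\mathrm{BB}_X^2$, and undoing the scaling yields degeneration to (\ref{eq:DynamicalOT_flat}).

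The main obstacle is the compactness and lower-semicontinuity step, which requires reformulating the action as a convex, l.s.c.\ functional of measures $(\rho,\mathbf{m},\zeta)$ in the style of \cite{chizat2018interpolating} and checking that the endpoint constraints pass to the limit. If this becomes technical, I would simply invoke those results from \cite{chizat2018interpolating,liero2016optimal}, as the statement itself does, and present the explicit competitors above as the substantive computation.
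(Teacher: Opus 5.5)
Your argument is correct. The paper gives no proof of this lemma; it simply quotes it from \cite{chizat2018interpolating, liero2016optimal}. Your sketch is therefore a self-contained replacement for the citation rather than a variant of an argument in the text.

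Its structure is a standard $\Gamma$-limit argument carried out directly on (\ref{eqn:WFR}). The rescaling $\mathbf{v}=\alpha\mathbf{u}$, $h=\beta g$ makes the feasible set parameter-free and the value nonincreasing in $\alpha$ and in $\beta$. The pure-growth and pure-transport competitors give the upper bounds. Compactness together with lower semicontinuity of the perspective functional $\int\!\!\int(\tfrac{1}{\alpha}|\mathbf{m}|^2+\tfrac{1}{\beta}\zeta^2)/\rho$ gives the lower bounds.

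A different route, using only facts the paper already states, goes through the static formula (\ref{eqn:Hellinger_Kantorovich}). Write $\theta=\sqrt{\beta/(4\alpha)}\,\|\bx-\by\|$.
\begin{itemize}
\item As $\alpha\to0$: $c_{\mathrm{HK}}=+\infty$ on $\{\|\bx-\by\|\ge\pi\sqrt{\alpha/\beta}\}$, so near-optimal plans collapse onto the diagonal. Minimizing $\tfrac{4}{\beta}[\KL(\gamma|\bar\rho_0)+\KL(\gamma|\bar\rho_1)]$ pointwise gives $\gamma=\sqrt{\bar\rho_0\bar\rho_1}$ and exactly $\tfrac{4}{\beta}\int_X(\sqrt{\bar\rho_1}-\sqrt{\bar\rho_0})^2d\bx$. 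The diagonal plan carried by this $\gamma$ has zero transport cost, so it also gives the upper bound for every $\alpha$.
\item As $\beta\to0$: the inequality $-\log\cos\theta\ge\theta^2/2$ gives $c_{\mathrm{HK}}\ge\tfrac{1}{\alpha}\|\bx-\by\|^2$, with pointwise convergence to $\tfrac{1}{\alpha}\|\bx-\by\|^2$. Meanwhile the diverging weight $\tfrac{4}{\beta}$ on the entropies forces the marginals to match.
\end{itemize}

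What each route buys:
\begin{itemize}
\item The static route produces the Hellinger value by a one-line pointwise minimization. It also gets compactness for free, because bounded relative entropy with respect to $\bar\rho_0,\bar\rho_1$ makes the marginals, hence the plans, tight. Its cost is that it rests on the nontrivial equivalence $\mathrm{HK}=\mathrm{WFR}$.
\item Your dynamic route avoids that theorem. Its cost is that you must work with measure-valued relaxations of the FR and BB problems and identify their values with the classical ones.
\end{itemize}

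A few minor points to tidy up.
\begin{itemize}
\item Your weak-$*$ compactness step tacitly uses that $X$ is bounded. For an unbounded convex $X$, the energy bounds alone do not obviously give tightness of $\rho^\alpha$, since growth can amplify small tails, so a separate argument is needed.
\item The mass bound you derive in the balanced part is already needed in the Fisher--Rao part, to bound $\rho^\alpha\mathbf{v}^\alpha$ and $\rho^\alpha h^\alpha$ in total variation.
\item The pure-growth competitor gives $\tfrac{1}{\beta}\mathrm{FR}^2=\tfrac{4}{\beta}\mathrm{dis}^2_{\mathrm H}$ with equality, not merely $\le$.
\end{itemize}
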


Hereafter, we will adopt the WFR unbalanced dynamical OT (\ref{eqn:WFR}) with two trade-off parameters $(\alpha, \beta)$ to define the unbalanced synchronized OT. This formulation is more convenient when exploring the two extreme cases. See Sections \ref{section:Extreme_case_I} and \ref{section:Extreme_case_II}.

 The WFR formulation of the unbalanced dynamical OT (\ref{eqn:WFR}) can also fit into the general framework of the dissipation distance associated with the dissipation potential $\Psi$ \cite{liero2016optimal, peletier2014variational}, 
 \begin{align}\label{eqn:dissipation_potential}
     D_{\mathrm{diss}}(\bar{\rho}_0,\bar{\rho}_1) : = \inf \left\{ \int_0^1 \Psi(\rho,\dot{\rho}) dt \ \middle| \  \rho\in H^1\Big([0,1];\mathcal{M}_+(X)\Big), \rho_{0,1} = \bar{\rho}_{0,1} \right\},
 \end{align}
 in which overhead dot stands for the time derivative,  $\Psi(\rho,\sigma) = \Psi_{\rho}(\sigma) = \langle \mathbb{G}(\rho)\sigma,\sigma\rangle$ is a quadratic form, and $\mathbb{G}(\rho): T_\rho \mathcal{M}_+ \rightarrow T^*_\rho\mathcal{M}_+$ is called the Riemannian operator, and usually assumed symmetric and positive definite. The Legendre conjugate of $\Psi$ with respect to the second variable, denoted by $\Psi^*$, is formulated as $\Psi^*(\rho,g) = \Psi^*_{\rho}(g)= \langle g, \mathbb{K}(\rho)g\rangle$. Here $\mathbb{K}$ is called the Onsager operator. The Riemannian and Onsager operators are inverse to each other: $\mathbb{G}(\rho)^{-1} = \mathbb{K}(\rho)$ and $\mathbb{K}(\rho)^{-1} = \mathbb{G}(\rho)$.

 In the theory of Kantorovich-Wasserstein distance \cite{villani2009optimal,liero2016optimal, jordan1998variational,otto2001geometry}, the Onsager operator $\mathbb{K}$ is of form
 \begin{align*}
     \mathbb{K}(\rho)g = -\nabla\cdot(M(\rho)\nabla g) + H(\rho)g,
 \end{align*}
 in which $M(\rho)$ is a symmetric and positive definite mobility tensor, and $H(\rho)$ is a reaction matrix. The Onsager operator $\mathbb{K}(\rho)$ can be seen as the inverse of a metric tensor $\mathbb{G}(\rho)$ that gives rise to a geodesic distance between $(\bar{\rho}_0,\bar{\rho}_1)$ defined as
 \begin{align*}
     \mathrm{dis}^2_{\mathbb{K}}(\bar{\rho}_0,\bar{\rho}_1): = \inf_{\rho}\left\{ \int_0^1 \langle \mathbb{G}(\rho)\dot{\rho},\dot{\rho} \rangle dt \ \middle| \ \rho_{0,1} = \bar{\rho}_{0,1}\right\}
     = \inf_{\rho}\left\{ \int_0^1 \langle \mathbb{K}^{-1}(\rho)\dot{\rho},\dot{\rho}\rangle dt \ \middle| \ \rho_{0,1} = \bar{\rho}_{0,1} \right\}. 
 \end{align*}
 Since in general the inverse of $\mathbb{K}$ is intractable, it is better to use $\Psi^*$ to reformulate the general Wasserstein distance in terms of the dual variable $g = \mathbb{K}^{-1}(\rho)\dot{\rho}$,
 \begin{align}
  \mathrm{dis}^2_{\mathbb{K}}(\bar{\rho}_0,\bar{\rho}_1)
  =& \inf_{\rho}\left\{ \int_0^1 \langle g,\mathbb{K}(\rho)g\rangle dt \ \middle| \ \partial_t\rho = \mathbb{K}(\rho)g, \ \rho_{0,1} = \bar{\rho}_{0,1} \right\}, \label{eqn:dissipation_distance_K}\\
  =& \inf_{\rho}\left\{ \int_0^1 \int_X \|\nabla g\|_{M}^2 + \|g\|_{H}^2 d\bx dt\ \middle| \ \partial_t\rho = -\nabla\cdot (M\nabla g) + Hg, \ \rho_{0,1} = \bar{\rho}_{0,1} \right\}. \label{eqn:general_Waserstein}
 \end{align}
 It is shown in \cite{liero2016optimal} that general Wasserstein distance (\ref{eqn:general_Waserstein}) is equivalent to the minimization
 \begin{align}
     \mathrm{dis}_{\mathbb{K}}^2(\bar{\rho}_0,\bar{\rho}_1): =\inf_{\rho}\left\{ \int_0^1 \int_X \|\mathbf{u}\|_{M}^2 + \|g\|_{H}^2 d\bx dt\ \middle| \ \partial_t\rho = -\nabla\cdot (M\mathbf{u}) + Hg, \ \rho_{0,1} = \bar{\rho}_{0,1} \right\}. 
 \end{align}
Namely, the velocity-reaction pair $(\mathbf{u},g)$ are related by $\mathbf{u} = \nabla g$ at optima. 

Now under the framework of dissipation distance, the Onsager operator $\mathbb{K}$ (or sometimes the dissipation potential $\Psi_{\rho}(\sigma) = \langle \mathbb{G}(\rho)\sigma,\sigma\rangle$ if $\mathbb{K}$ is intractable, see (\ref{list:K_SyncOT_dissipation}) and (\ref{list:K_UnSyncOT_dissipation})), determines the general Wasserstein distance. Here we list several special cases:
\begin{subequations}\label{eqn:K_operators}
  \begin{align}
    \text{BB}: \quad & \mathbb{K}(\rho)g=-\nabla\cdot(\rho\nabla g); \\
    \text{FR}: \quad & \mathbb{K}(\rho)g= \rho g; \\
    \text{WFR}: \quad & \mathbb{K}(\rho)g=-\alpha\nabla\cdot(\rho\nabla g) + \beta \rho g; \\
    \text{M. SyncOT}: \quad  & \mathbb{K}(\rho)g = -\nabla\cdot(\rho\bA^{-1}\nabla g); \ (\mathrm{Corollary\ } \ref{corollary:Monge_SyncOT_Onsager}) \\
    \text{M. UnSyncOT}: \quad  & \mathbb{K}(\rho)g = -\alpha\nabla\cdot(\rho\bA^{-1}\nabla g) + \beta \rho g; 
    \ (\mathrm{Corollary\ } \ref{corollary:Monge_UnSyncOT_Onsager}) \\
    \text{K. SyncOT}: \quad  & \mathbb{G}(\rho)\dot{\rho} = \left(c_1(-\Delta_{\rho})^{-1} + c_2 \TK^*(-\Delta_{\TK\rho})^{-1}\TK \right)\dot{\rho};  \ (\mathrm{Corollary\ } \ref{corollary:K_SyncOT_dissipation}) \label{list:K_SyncOT_dissipation} \\
    \text{K. UnSyncOT}: \quad  & \Psi_\rho(\sigma) = \mathrm{eqn.\ } (\ref{eqn:K_UnSyncOT_dissipationpotential}); \ (\mathrm{Corollary\ } \ref{corollary:K_UnSyncOT_dissipation}) \label{list:K_UnSyncOT_dissipation}
  \end{align}
\end{subequations}


Similar to the equivalence between the squared 2-Wasserstein distance \eqref{eqn:pWasserstein_flat} and the BB formula \eqref{eq:DynamicalOT_flat}, the WFR formulation of the unbalanced dynamical OT \eqref{eqn:WFR} also has an equivalent static OT formulation, which is now called Hellinger-Kantorovich (HK) distance \cite{liero2018optimal}
\begin{align}\label{eqn:Hellinger_Kantorovich}
    \mathrm{HK}^2_{\alpha,\beta}(\bar{\rho}_0,\bar{\rho}_1): = \inf_{\pi\in\mathcal{M}_+(X\times X)}\ \int_{X\times X} c_{\mathrm{HK}} d\pi + \frac{4}{\beta}\KL((P_\bx)_{\sharp}\pi | \bar{\rho}_0) + \frac{4}{\beta}\KL((P_\by)_{\sharp}\pi | \bar{\rho}_1).
\end{align}
Here $P_\bx(\bx,\by)=\bx, P_\by(\bx,\by)=\by$, and the cost function is of a special form
\begin{align*}
    c_{\mathrm{HK}}(\bx,\by) = -\frac{8}{\beta}\log \cos \left(\min\left\{ \sqrt{\frac{\beta}{4\alpha}} \|\bx-\by\|, \frac{\pi}{2} \right\}\right).
\end{align*}
The HK distance \eqref{eqn:Hellinger_Kantorovich} is independently proposed in \cite{chizat2018unbalanced, liero2018optimal}, which will play a key role in the definition of the unbalanced synchronized optimal transport, see details in Section \ref{subsection:SyncOT_Kantorovich}.

Besides, there are several additional extensions of the dynamical OT. Unnormalized dynamical OT has been proposed \cite{gangbo2019unnormalized,lee2021generalized}, by taking the growth term in (\ref{eqn:feasible_WFR_flat}) as a more general form $g$ instead of $\rho g$, either in time-dependent only form \cite{gangbo2019unnormalized} or in both time- and spatial-dependent form \cite{lee2021generalized}. A further extension of the dynamical OT has been developed to incorporate more general energy functionals by replacing the kinetic energy in (\ref{eq:DynamicalOT_flat}) with a general Lagrangian $\mathcal{L}(\rho,\dot{\rho},t)$ \cite{pooladian2023neural,neklyudov2023computational}.

\section{Unbalanced Synchronized Optimal Transport Formulation} \label{section:SynOT}

In this section, we propose {\it unbalanced synchronized optimal transport} (UnSyncOT), to attain coherent dynamics across multiple spaces. Let $X^{(i)} \subseteq \mathbb{R}^{n_i}, i=1,\cdots,d$ be $d$ spaces of interest. We call $X^{(1)}$ the primary space, and $X^{(i)}, i=2,\cdots, d$ secondary spaces. UnSyncOT considers the following minimization problem
\begin{align}\label{eqn:SynOT_dSpaces}
 \mathrm{USOT}(\bar{\rho}_0,\bar{\rho}_1) = \inf_{(\rho^{(i)},\mathbf{u}^{(i)},g^{(i)})_{i=1}^d \in \mathcal{C}_{\mathrm{USOT}}} \ \sum_{i=1}^{d} c_i \int_0^1 \int_{X^{(i)}} ( \alpha \|\mathbf{u}^{(i)}\|^2 + \beta|g^{(i)}|^2) \rho^{(i)}  d\mathrm{vol}_{X^{(i)}}dt
\end{align}
with the feasible set defined as
\begin{align}\label{eqn:SynOT_dSpaces_Constraints}
  \mathcal{C}_{\mathrm{USOT}}(\bar{\rho}_0,\bar{\rho}_1)= \left\{ (\rho^{(i)},\mathbf{u}^{(i)}, g^{(i)})_{i=1}^d \middle| 
  \begin{array}{l}
  \partial_t\rho^{(i)} + \alpha \div_{X^{(i)}}(\rho^{(i)}\mathbf{u}^{(i)}) = \beta\rho^{(i)}g^{(i)} \mathrm{\ in\ } \mathcal{D}'(X^{(i)}), \\ \rho^{(i)}_{0,1} = \bar{\rho}_{0,1}^{(i)}, \ \langle \bu^{(i)}, \mathbf{n}|_{\partial X^{(i)}} \rangle_{g^{(i)}}=0, \ i=1:d,\\
  \rho^{(i)} = \mathcal{T}^{(i)}\rho^{(1)}, \ i=2,\ldots d.
  \end{array}
  \right\}
\end{align}
Here we adopt the notation in Section \ref{subsection:notation} to assume all spaces embedded Riemannian manifold $(X^{(i)}, g^{(i)})$, though it is possible that some spaces are flat spaces in which the Riemannian structure degenerates to Euclidean structure. Here $c_i$ are the weights across spaces $X^{(i)}$ satisfying $c_i\ge 0$ and $\sum_{i} c_i = 1$. In each space $X^{(i)}$, $(\alpha,\beta)$ are trade-off parameters between the mass transportation and mass growth. $\rho^{(1)}_t: (0,1)\rightarrow \mathcal{M}_+(X^{(1)})$ represents an absolutely continuous curve of measures in the primary space $X^{(1)}$ (the primary dynamics). Dynamics in the remaining spaces are induced from the primary one through maps $\mathcal{T}^{(i)}:\mathcal{M}_+(X^{(1)})\rightarrow\mathcal{M}_+(X^{(i)})$. We denote the initial and terminal measures in $X^{(i)}$ by $\rho_0^{(i)}=\bar{\rho}_0^{(i)}$ and $\rho_1^{(i)}=\bar{\rho}_1^{(i)}$.  Within the UnSyncOT formulation (\ref{eqn:SynOT_dSpaces}), these couplings impose natural marginal compatibility conditions: for each space $X^{(i)}, i=2,\cdots,d$, there exists at least one map $\mathcal{T}^{(i)}$ such that the endpoint measures are consistent, namely, $\rho_{0,1}^{(i)} = \mathcal{T}^{(i)}\rho_{0,1}^{(1)}$.

\begin{figure}[t]
  \centering
  \includegraphics[width=0.8\linewidth]{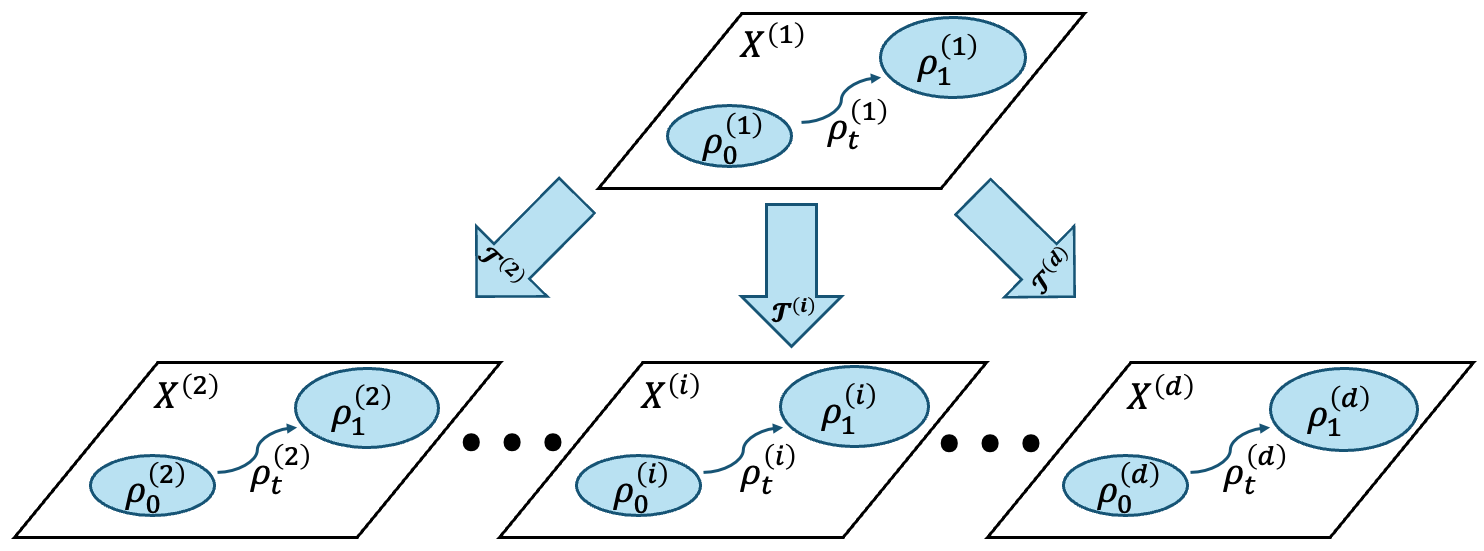}
  \caption{Schematic of unbalanced synchronized optimal transport. There is a given mapping from the primary space $X^{(1)}$ to each secondary space $X^{(i)}$, for $i=2,\cdots, d$.}
  \label{fig:myplot}
\end{figure}

Following our earlier work for the balanced SyncOT in \cite{cang2024synchronized}, we consider two commonly encountered forms of $\mathcal{T}^{(i)}$:
\vspace{-.1in}
\begin{itemize}
\item {\it Monge form}: $\mathcal{T}^{(i)}_{\mathrm{M}} = \mathbf{T}_{\sharp}^{(i)}$ for a known map $\mathbf{T}^{(i)}:X^{(1)}\rightarrow X^{(i)}$. For instance, the map can be obtained by a trained neural network, then one can use the pushforward operator to define $\rho^{(i)} = \mathbf{T}^{(i)}_\sharp\rho^{(1)}$.
\vspace{-.1in}
\item {\it Kantorovich form}: $\mathcal{T}^{(i)}_{\mathrm{K}} = \int_{X^{(1)}}\text{d}\pi^{(i)}$ for a known joint distribution (a Markov kernel) $\pi^{(i)}\in\mathcal{M}_+(X^{(1)}\times X^{(i)})$. For instance, $\pi$ can be obtained by solving a static Kantorovich OT on two datasets representing $X^{(1)}$ and $X^{(i)}$ and normalizing the resulting transport matrix row-wisely, then one can define $\rho_t^{(i)}(\bx^{(i)})=\int_{X^{(1)}}\rho_t^{(1)}(\bx^{(1)}) \pi^{(i)}(\bx^{(1)},\bx^{(i)}) \mathrm{d}\bx^{(1)} $.
\end{itemize}

Given the maps $\mathcal{T}^{(i)}$, the primary velocity-growth pair $(\bu^{(1)},g^{(1)})$ and the secondary pair $(\bu^{(i)},g^{(i)})$ are usually implicitly linked through $\mathcal{T}^{(i)}$. In the rest of the paper, we will explore the links between them, which will lead to simpler formulations for UnSyncOT for simple numerical implementation.

For brevity, we focus on the case of $d = 2$, and denote by $X\subseteq\mathbb{R}^m$ and $Y\subseteq\mathbb{R}^n$ the primary and secondary spaces, and $(\rho, \mathbf{u}, g)$ and $(\xi, \mathbf{v},h)$ the corresponding density-velocity-growth triples. The map between the primary and secondary spaces is denoted by $\mathcal{T}$ (either $\mathcal{T}_{\mathrm{M}}$ or $\mathcal{T}_{\mathrm{K}}$). The extension of the theories and numerical methods for UnSyncOT to the general $d$-space case is straightforward. In what follows, we will focus on the two-space UnSyncOT in both Monge and Kantorovich forms.

\subsection{Unbalanced Synchronized Optimal Transport in Monge Form}\label{subsection:Monge_UnSyncOT}

In the Monge UnSyncOT, we assume $X\subseteq \bbR^m$ to be open and convex. The $C^1$-embedding $\bT: X\rightarrow Y = \bT(X) \subseteq \bbR^n$ define the embedded Riemannian submanifold $(Y,g)$ of $\bbR^n$.

Now we consider the Monge UnSyncOT as follows:
\begin{align}\label{eqn:Monge_UnSyncOT}
\mathrm{USOT}_{\mathrm{M}}(\bar{\rho}_0,\bar{\rho}_1) = \inf_{(\rho,\mathbf{u},g;\xi,\bv,h)\in \mathcal{C}^{\mathrm{M}}_{\mathrm{USOT}} } A_{\mathrm{USOT}}^{\mathrm{M}} = \int_0^1 \bigg[ & c_1 \int_{X} \Big( \alpha\|\mathbf{u}_t\|^2 + \beta g_t^2 \Big) \rho_t d\bx   \nonumber \\
& + c_2 \int_{Y}  \Big( \alpha\|\mathbf{v}_t\|^2 +  \beta h_t^2 \Big) \xi_t d\mathrm{vol}_Y   \bigg] dt,
\end{align}
with the feasible set $\mathcal{C}^{\mathrm{M}}_{\mathrm{USOT}}(\bar{\rho}_0,\bar{\rho}_1)$ defined as
\begin{align}\label{eqn:Monge_UnSyncOT_Constraints}
  \mathcal{C}^{\mathrm{M}}_{\mathrm{USOT}} = \left\{ (\rho,\mathbf{u},g;\xi,\bv,h) \middle| 
  \begin{array}{l}
  \partial_t\rho + \alpha\nabla\cdot(\rho\mathbf{u}) = \beta\rho g \ \mathrm{in}\ \mathcal{D}'(X), 
  \rho_{0,1} = \bar{\rho}_{0,1},  \bu\cdot\mathbf{n}|_{\partial X} = 0,\\
  \partial_t\xi + \alpha\div_Y(\xi \mathbf{v}) = \beta\xi h\ \mathrm{in}\ \mathcal{D}'(Y), 
  \xi_t = \mathbf{T}_{\sharp}\rho_t,
    \langle \bv, \mathbf{n}|_{\partial Y} \rangle_g = 0, \\
  h_t(\by) = g_t(\mathbf{T}^{-1}(\mathbf{y})).
  \end{array}
  \right\}
\end{align}
The primary dynamics $\rho_t(\mathbf{x})$ induces a secondary dynamics $\xi_t(\by)$ through map $\mathbf{T}_{\sharp}:\mathcal{M}_+(X)\rightarrow\mathcal{M}_+(Y)$, with the assumption of the marginal compatibility condition that $\mathbf{T}$ is a $C^1$-embedding such that $\xi_{0,1} = \bT_\sharp \rho_{0,1}$.
The $C^1$-embedding $\bT$ guarantees to map the boundary of $X$ to the boundary of $Y$ such that the boundary conditions of $\bu$ and $\bv$ are both satisfied. Besides, growth terms $g_t$ in $X$ and $h_t = g_t\circ\mathbf{T}^{-1}$ in $Y$ are carefully taken in order to maintain the mass consistency in the two spaces,
\begin{align*}
    \frac{d}{dt} \int_X \rho_t(\bx) d\bx = \beta\int_X g_t(\bx) \rho_t(\bx) d\bx = \beta \int_Y g_t\left(\mathbf{T}^{-1}(\by)\right) \xi_t(\by)d\mathrm{vol}_Y = \frac{d}{dt}\int_Y \xi_t(\by) d\mathrm{vol}_Y,
\end{align*}
where the second equation above is due to the condition $\xi_t d\mathrm{vol}_Y = \mathbf{T}_{\sharp}(\rho_t d\bx)$.

The UnSyncOT in Monge form (\ref{eqn:Monge_UnSyncOT}) can be recast into a single-space unbalanced dynamical OT form where the ground metric depends on $\mathbf{T}$. We summarize it in the following theorem.

\begin{theorem}
The Monge unbalanced synchronized optimal transport (\ref{eqn:Monge_UnSyncOT}) is equivalent to the following single-space unbalanced dynamical OT:
\begin{align}\label{eqn:Monge_UnSyncOT_SingleSpace}
  \mathrm{USOT}_{\mathrm{M}}(\bar{\rho}_0,\bar{\rho}_1) = \inf_{(\rho,\mathbf{u},g)\in\mathcal{C}_{\mathrm{WFR}_{\alpha,\beta}}} \ \int_0^1 \int_{X} \Big[ \alpha\left\|\mathbf{u}_t\right\|^2_{\mathbf{A}} + \beta g_t^2 \Big] \rho_t d\bx dt
\end{align}
in which $\mathbf{A} = c_1 \mathbf{I}+c_2(\nabla \mathbf{T})^{\mathrm{T}}(\nabla \mathbf{T})$, $\nabla\mathbf{T}$ is the Jacobian of $\mathbf{T}$, $\|\mathbf{u}\|_{\mathbf{A}}^2 := \mathbf{u}^{\mathrm{T}} \mathbf{A}\mathbf{u}$, and $\mathcal{C}_{\mathrm{WFR}_{\alpha,\beta}}$ is defined in \eqref{eqn:feasible_WFR_flat}.
\end{theorem}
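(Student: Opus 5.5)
The plan is to mimic the proof of Lemma \ref{lemma:W2_Anorm}. We build a correspondence between feasible tuples $(\rho,\bu,g;\xi,\bv,h)\in\mathcal{C}^{\mathrm{M}}_{\mathrm{USOT}}$ and feasible triples $(\rho,\bu,g)\in\mathcal{C}_{\mathrm{WFR}_{\alpha,\beta}}$, and show that the two actions agree along this correspondence. The secondary variables are essentially determined by the primary ones. Indeed, $\xi_t\,d\mathrm{vol}_Y=\bT_\sharp(\rho_t d\bx)$ and $h_t=g_t\circ\bT^{-1}$ are imposed by the constraints. The only freedom is in $\bv$, and we will fix it to be the forward map (\ref{eqn:forward_map_XZ}), $\bv_t(\bT(\bx))=\nabla\bT(\bx)\bu_t(\bx)$.

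\emph{Step 1 (primary to secondary).} Given $(\rho,\bu,g)\in\mathcal{C}_{\mathrm{WFR}_{\alpha,\beta}}$, define $\xi,\bv,h$ as above and check that $\partial_t\xi+\alpha\div_Y(\xi\bv)=\beta\xi h$ holds in $\mathcal{D}'(Y)$. Take $\phi\in C_c^\infty(Y)$ and set $\varphi=\phi\circ\bT$, a $C^1$ test function on $X$. Then:
\begin{align*}
\frac{d}{dt}\int_Y\phi\,\xi_t\,d\mathrm{vol}_Y&=\frac{d}{dt}\int_X\varphi\,\rho_t\,d\bx=\int_X\big(\alpha\nabla\varphi\cdot\bu_t+\beta\varphi g_t\big)\rho_t\,d\bx\\
&=\int_Y\big(\alpha\langle\nabla_Y\phi,\bv_t\rangle_g+\beta\phi h_t\big)\xi_t\,d\mathrm{vol}_Y.
\end{align*}
The last equality uses $\nabla\varphi\cdot\bu=\langle\nabla_Y\phi,(\nabla\bT)\bu\rangle_g$, which follows from the coordinate formula for $\nabla_Y$ together with $g_{ij}=G_{ij}$. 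This is exactly the weak form of the secondary equation. The endpoint conditions $\xi_{0,1}=\bT_\sharp\bar\rho_{0,1}$ follow from the marginal compatibility assumption. The no-flux condition transfers because $\bT$ maps $\partial X$ to $\partial Y$, and weakly it is already encoded in the test-function identity.

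\emph{Step 2 (action identity).} By change of variables under $\bT_\sharp$ and the norm identity (\ref{eqn: two_norms}), we get
\[
\int_Y(\alpha\|\bv_t\|_g^2+\beta h_t^2)\xi_t\,d\mathrm{vol}_Y=\int_X(\alpha\|\bu_t\|_G^2+\beta g_t^2)\rho_t\,d\bx .
\]
Adding $c_1$ times the primary action and using $c_1+c_2=1$ gives $\int_X(\alpha\|\bu_t\|_{\bA}^2+\beta g_t^2)\rho_t\,d\bx$ with $\bA=c_1\mathbf I+c_2G$. Hence $\mathrm{USOT}_{\mathrm M}\le$ RHS of (\ref{eqn:Monge_UnSyncOT_SingleSpace}).

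\emph{Step 3 (reverse inequality; the main obstacle).} For an arbitrary feasible tuple, $\bv$ need not equal $(\nabla\bT)\bu$. Any admissible $\bv$ is a tangent field, so it can be written as $\bv=(\nabla\bT)\tilde\bu$ with $\tilde\bu=(\nabla\bT)^\dagger\bv$, as in (\ref{eqn:backward_map_XZ}). By the weak computation of Step 1 run backwards, both $\bu$ and $\tilde\bu$ then satisfy the same weak flux law against gradients: $\int\rho\,\nabla\varphi\cdot(\bu-\tilde\bu)=0$ for all test $\varphi$. In other words, $\rho(\bu-\tilde\bu)$ is divergence-free with no flux, and $\rho\tilde\bu$ has the same divergence as $\rho\bu$. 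It is therefore legitimate to replace $\bu$ by
\[
\bu^\ast=\bA^{-1}\big(c_1\bu+c_2G\tilde\bu\big),
\]
a convex combination in the $\bA$-geometry. This $\bu^\ast$ preserves the primary continuity equation, since $\rho\,\bA\bu^\ast=c_1\rho\bu+c_2\rho G\tilde\bu$. Pointwise, $\bu^\ast$ minimises $c_1\|\bw-\bu\|^2+c_2\|\bw-\tilde\bu\|_G^2$ over $\bw$, which gives
\[
c_1\|\bu\|^2+c_2\|\tilde\bu\|_G^2\ \ge\ \|\bu^\ast\|_{\bA}^2 .
\]
This delicate point needs checking carefully. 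The alternative I would try first, if this convex-combination argument gives trouble, is a dual argument. Write the Fenchel dual of both problems with a single potential $\varphi$ on $X$ (with $\phi=\varphi\circ\bT^{-1}$ on $Y$), noting that the Hamiltonian of $c_1\|\bu\|^2+c_2\|\tilde\bu\|^2_G$ under the shared constraint is $\tfrac{\alpha}{4}\|\nabla\varphi\|^2_{\bA^{-1}}$. This is the same Hamiltonian as for the single-space problem, so the two problems have equal values. Taking infima on both sides then yields equality.
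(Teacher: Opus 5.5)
Your Steps 1--2 are correct and reproduce the paper's computation: you pull the secondary action back through $\bT_\sharp$ and use $\|\bv_t\circ\bT\|_g=\|\bu_t\|_G$ when $\bv_t\circ\bT=(\nabla\bT)\bu_t$. They prove $\mathrm{USOT}_{\mathrm M}\le$ the right-hand side of (\ref{eqn:Monge_UnSyncOT_SingleSpace}). Step 3 is where the proof breaks, and neither of your routes works.

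The field $\bu^\ast=\bA^{-1}(c_1\bu+c_2G\tilde\bu)$ is not feasible. The identity $\rho\bA\bu^\ast=c_1\rho\bu+c_2\rho G\tilde\bu$ controls the divergence of $\rho\bA\bu^\ast$, not of $\rho\bu^\ast$. In any case you only know $\nabla\cdot(\rho\tilde\bu)=\nabla\cdot(\rho\bu)$, and nothing about $\nabla\cdot(\rho G\tilde\bu)$. Writing $\tilde\bu=\bu+\bw$ with $\nabla\cdot(\rho\bw)=0$ gives $\bu^\ast=\bu+c_2\bA^{-1}G\bw$, and $\rho\bA^{-1}G\bw$ is generally not divergence-free. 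For example, take $\rho\equiv1$, a constant diagonal non-scalar $G$, and $\bw=\nabla^\perp\psi$; the divergence is then a nonzero multiple of $\partial_{x_1x_2}\psi$.

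The dual fallback hides the same gap. The primary and secondary constraints are independent, so the dual carries two potentials $\varphi_1,\varphi_2$, with Hamiltonian $\frac{\alpha}{4c_1}\|\nabla\varphi_1\|^2+\frac{\alpha}{4c_2}\|\nabla\varphi_2\|_{G^{-1}}^2$. Collapsing this to one potential with $\frac{\alpha}{4}\|\nabla\varphi\|^2_{\bA^{-1}}$ requires the pointwise optimal split $\nabla\varphi_1=c_1\bA^{-1}\nabla\varphi$ to be a gradient, which fails in general.

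More fundamentally, the inequality you are after is false for the feasible set as written, so no argument can supply it. Because $\bv$ enters $\mathcal{C}^{\mathrm M}_{\mathrm{USOT}}$ only through its continuity equation, for fixed $(\rho,g)$ the minimizations over $\bu$ and $\tilde\bu$ decouple. Set $f_t=(\partial_t\rho_t-\beta\rho_tg_t)/\alpha$ and $\|f\|^2_{-1,\rho,\mathbf M}:=\inf\{\int_X\|\bu\|^2_{\mathbf M}\rho\,d\bx:\,-\nabla\cdot(\rho\bu)=f\}$. The transport cost at time $t$ is then $\alpha\big(c_1\|f_t\|^2_{-1,\rho_t,\mathbf I}+c_2\|f_t\|^2_{-1,\rho_t,G}\big)$. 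The single-space problem instead charges $\alpha\|f_t\|^2_{-1,\rho_t,\bA}$, which is $\ge$ the former, with equality only if one field is optimal for both norms. This decoupled structure is exactly what the paper itself obtains in the Kantorovich case (Corollary \ref{corollary:K_UnSyncOT_dissipation}), where the secondary velocity is optimized separately.

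Concretely, take $X=\mathbb{R}^2$, $\bT(x_1,x_2)=(x_1,x_2,\lambda x_1)$ with $\lambda\neq0$, and $c_1,c_2>0$; then $\bA=\mathrm{diag}(1+c_2\lambda^2,1)$ is constant. Suppose the single-space optimal velocity $\bA^{-1}\nabla\Phi_t$ is not a gradient, i.e.\ $\partial_{x_1x_2}\Phi_t\neq0$. This already happens for $\beta=0$ between centered Gaussians with non-diagonal covariance. Replacing $\bu$ and $\tilde\bu$ by their separately optimal fields then gives a feasible tuple with strictly smaller action.

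The paper's proof reaches equality only by asserting $\bv_t\circ\bT=(\nabla\bT)\bu_t$ ``since $\phi$ is arbitrary''. That identity is tested only against gradients $\nabla(\phi\circ\bT)$, and it yields precisely what you observed: $\rho(\bu-\tilde\bu)$ is divergence-free. If you build the velocity synchronization $\bv_t\circ\bT=(\nabla\bT)\bu_t$ into the constraint set (the forward map of Lemma \ref{lemma:W2_Anorm}), your Steps 1--2 already give a bijection with equal actions, and hence the stated equality. Step 3 should then simply be dropped.
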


\begin{proof}

 To prove the equivalence, the key step is to calculate the total cost in the secondary space. By the relation $\xi_t = \mathbf{T}_{\sharp}\rho_t$, the secondary space cost becomes
    \begin{align}\label{eqn:Tpush_eqn}
       \alpha \int_{Y}  \|\bv_t\|^2 \xi_t d\mathrm{vol}_Y + \beta \int_{Y}  |g_t\circ\mathbf{T}^{-1}|^2 \xi_t d\mathrm{vol}_Y 
       = \alpha\int_{X}  \|\bv_t\circ\mathbf{T}\|^2 \rho_t d\bx + \beta \int_X g_t^2 \rho_t d\bx.
    \end{align}
    Now we replace $\bv$ in terms of $\bu$. Note that $\bv$ satisfies the continuity equation with source term 
    in the distributional sense, hence for any test function $\phi\in C_{c}^{\infty}(Y)$, it holds that
    \[
    \int_{Y} \frac{\partial}{\partial t} \xi_t(\mathbf{y}) \phi(\mathbf{y}) d\mathrm{vol}_Y
    = - \alpha\int_{Y} \div_Y\left(\xi_t(\mathbf{y})\bv_t(\mathbf{y})\right)\phi(\mathbf{y}) d\mathrm{vol}_Y + \beta\int_{Y} \xi_t(\mathbf{y})g_t(\mathbf{T}^{-1}(\by))\phi(\mathbf{y}) d\mathrm{vol}_Y.
    \]
    From the above equation, we have
    \begin{align*}
      \text{LHS} &= \frac{d}{dt} \int_{Y} \xi_t(\mathbf{y}) \phi(\mathbf{y}) d\mathrm{vol}_Y  = \frac{d}{dt} \int_{X} \rho_t(\mathbf{x}) \phi(\bT(\mathbf{x})) d\mathbf{x} = \int_{X}  \partial_t\rho_t(\mathbf{x}) \phi(\bT(\mathbf{x})) d\mathbf{x} \\
      &= -\alpha\int_{X} \nabla\cdot \left(\rho_t(\mathbf{x})\bu_t(\mathbf{x})\right)\phi(\bT(\mathbf{x})) d\mathbf{x} 
      + \beta\int_X \rho_t(\bx)g_t(\bx)\phi(\mathbf{T}(\bx)) d\bx
      \\
      & = \alpha\int_{X}\nabla\Big(\phi(\bT(\mathbf{x}))\Big)\cdot \left(\rho_t(\mathbf{x})\bu_t(\mathbf{x})\right) d\mathbf{x} + \beta\int_X \rho_t(\bx)g_t(\bx)\phi(\mathbf{T}(\bx))d\bx \\
      & = \alpha\int_{X} \Big[(\nabla_Y\phi)(\bT(\mathbf{x}))\Big]^{\mathrm{T}} (\nabla \bT(\bx)) \bu_t(\mathbf{x})\rho_t(\mathbf{x})d\bx + \beta \int_X \rho_t(\bx)g_t(\bx)\phi(\mathbf{T}(\bx)) d\bx, \\
    \intertext{\text{and}}
      \text{RHS} & = \alpha\int_{Y} \nabla_{Y}\phi(\mathbf{y})\cdot \left(\xi_t(\mathbf{y})\bv_t(\mathbf{y})\right) d\mathrm{vol}_Y + \beta\int_{Y} \xi_t(\mathbf{y})g_t(\mathbf{T}^{-1}(\mathbf{y}))\phi(\mathbf{y}) d\mathrm{vol}_Y \\
      & = \alpha \int_{X} \Big[(\nabla_{Y}\phi)(\bT(\mathbf{x}))\Big]^{\mathrm{T}} \bv_t(\bT(\mathbf{x})) \rho_t(\mathbf{x}) d\bx + \beta\int_X \rho_t(\bx)g_t(\bx)\phi(\mathbf{T}(\bx)) d\bx.
    \end{align*}
    Since $\phi$ is arbitrary, it holds that
    \begin{align}\label{eqn:v_relation}
        \bv_t(\bT(\mathbf{x})) = (\nabla \bT(\bx)) \bu_t(\mathbf{x}) .
    \end{align}
    Finally, inserting the relation (\ref{eqn:v_relation}) between $\bv$ and $\bu$ into (\ref{eqn:Tpush_eqn}), we have
    \begin{align*}
      \alpha\int_{Y} \xi_t \|\bv_t\|^2 d\mathrm{vol}_Y + \beta \int_{Y} \xi_t |g_t\circ\mathbf{T}^{-1}|^2 d\mathrm{vol}_Y
      = \alpha \int_{X} \rho_t \big\| \bu_t \big\|^2_{(\nabla \bT)^\mathrm{T}(\nabla \bT)} d\mathbf{x} + \beta \int_X \rho_t g_t^2 d\bx.
    \end{align*}
    The proof is therefore completed by combining the above secondary space cost (in terms of $\rho,\bu,g$) and the primary space cost.
\end{proof}

We can further provide the dual formulation in term of $\Phi_t(\bx)$ for Monge UnSyncOT (\ref{eqn:Monge_UnSyncOT_SingleSpace}).
\begin{corollary}\label{corollary:Monge_UnSyncOT_dual}
    The minimizer $(\rho_t(\bx),\bu_t(\bx),g_t(\bx))$ of the single-space Monge UnSyncOT problem (\ref{eqn:Monge_UnSyncOT_SingleSpace}) satisfies
    \begin{align}\label{eqn:Monge_UnSyncOT_dual_u_g}
        \bu_t(\bx) = \bA^{-1}\nabla\Phi_t(\bx), \quad g_t(\bx) = \Phi_t(\bx),
    \end{align}
    in which the dual variable $\Phi_t(\bx)$ is determined by
    \begin{align}\label{eqn:Monge_UnSyncOT_dualform}
        \begin{cases}
            \partial_t\rho_t + \alpha\nabla\cdot(\rho_t\bA^{-1}\nabla\Phi_t) = \beta\rho_t\Phi_t,\\
            \partial_t\Phi_t + \frac{\alpha}{2}\|\nabla\Phi_t\|_{\bA^{-1}}^2 + \frac{\beta}{2}\Phi_t^2 \le 0, \\
            \rho_0 = \bar{\rho}_0, \rho_1 = \bar{\rho}_1.
        \end{cases}
    \end{align}
    If $\rho_t > 0$, then
    \begin{align}
        \partial_t\Phi_t + \frac{\alpha}{2}\|\nabla\Phi_t\|_{\bA^{-1}}^2 + \frac{\beta}{2}\Phi_t^2 = 0. 
    \end{align}
\end{corollary}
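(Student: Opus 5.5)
I would derive the optimality system through a Lagrangian saddle-point argument, in the style of Benamou--Brenier and the WFR analysis of Chizat et al.

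\textbf{Convexification.} Introduce the momentum $\bfm_t = \rho_t\bu_t$ and the source $\zeta_t = \rho_t g_t$. The action in (\ref{eqn:Monge_UnSyncOT_SingleSpace}) then becomes
\[
\int_0^1\!\!\int_X \Big[\alpha\frac{\|\bfm\|_\bA^2}{\rho} + \beta\frac{\zeta^2}{\rho}\Big]\, d\bx\, dt ,
\]
which is jointly convex and $1$-homogeneous in $(\rho,\bfm,\zeta)$. The constraint $\partial_t\rho + \alpha\nabla\cdot \bfm = \beta\zeta$ is linear. Since $\bA = c_1\mathbf I + c_2(\nabla\bT)^{\rm T}\nabla\bT$ is symmetric positive definite (for $c_1>0$, or by the embedding property when $c_1=0$), the problem is a convex program.

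\textbf{Lagrangian.} Add a multiplier $\Phi_t(\bx)$ for the constraint and integrate by parts in $t$ and $\bx$. The no-flux condition $\bu\cdot\mathbf n=0$ kills the spatial boundary terms, and the endpoint data produce the terms $\int_X \Phi_1\bar\rho_1 - \Phi_0\bar\rho_0$. The Lagrangian is
\[
\int_0^1\!\!\int_X \Big[\alpha\frac{\|\bfm\|_\bA^2}{\rho}+\beta\frac{\zeta^2}{\rho} - \rho\,\partial_t\Phi - \alpha\,\bfm\cdot\nabla\Phi - \beta\,\zeta\Phi\Big] + \int_X\big(\Phi_1\bar\rho_1-\Phi_0\bar\rho_0\big).
\]
Here I am assuming the sign conventions are fixed so that the Lagrangian is a sup over $\Phi$.

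\textbf{Pointwise minimization.} First minimize in $\bfm$ and $\zeta$ for fixed $\rho>0$.
\begin{itemize}
\item The first-order condition in $\bfm$ is $2\bA\bfm/\rho = \nabla\Phi$, so $\bu = \tfrac12\bA^{-1}\nabla\Phi$.
\item The first-order condition in $\zeta$ gives $g = \tfrac12\Phi$.
\end{itemize}
The harmless factor $\tfrac12$ is absorbed by rescaling $\Phi\mapsto 2\Phi$ (equivalently, by normalizing the action with a $\tfrac12$). After this rescaling, $\bu=\bA^{-1}\nabla\Phi$ and $g=\Phi$, which is (\ref{eqn:Monge_UnSyncOT_dual_u_g}). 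Substituting back, the integrand becomes
\[
-\rho\Big(\partial_t\Phi + \tfrac{\alpha}{2}\|\nabla\Phi\|^2_{\bA^{-1}} + \tfrac{\beta}{2}\Phi^2\Big),
\]
using $\nabla\Phi^{\rm T}\bA^{-1}\nabla\Phi = \|\nabla\Phi\|_{\bA^{-1}}^2$.

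\textbf{Minimization over $\rho\ge0$.} The infimum over $\rho\ge 0$ is finite only if the Hamilton--Jacobi expression is $\le 0$; otherwise it is $-\infty$. This yields the inequality in (\ref{eqn:Monge_UnSyncOT_dualform}). Wherever $\rho_t>0$, complementary slackness forces equality. The continuity equation in (\ref{eqn:Monge_UnSyncOT_dualform}) is then simply the primal constraint with the recovered $(\bu,g)$, together with the endpoint conditions.

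\textbf{Main obstacle.} The delicate step is justifying strong duality and attainment, so that the KKT/complementary-slackness conditions actually characterize the minimizer. My plan is to invoke Fenchel--Rockafellar duality in the standard way, as done for WFR in \cite{chizat2018interpolating}. The cost is the support function of the convex set
\[
\Big\{(a,\mathbf b,c) : a + \tfrac{1}{4\alpha}\|\mathbf b\|^2_{\bA^{-1}} + \tfrac{1}{4\beta}c^2\le0\Big\},
\]
and $\bA$ merely deforms this set by a fixed, uniformly positive definite matrix field. The qualification condition (e.g.\ $\Phi\equiv -C$ strictly feasible) therefore transfers verbatim from the WFR case. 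At the level of a formal derivation, I would present the computation above and cite that argument for rigor.
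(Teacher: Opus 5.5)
Your derivation matches the paper's proof. The paper also passes to $(\rho,\bfm,H)$ and forms the Lagrangian with multiplier $\Phi$, using the halved action $\tfrac{\alpha}{2}\rho^{-1}\|\bfm\|_\bA^2+\tfrac{\beta}{2}\rho^{-1}H^2$ (your ``normalize by $\tfrac12$'' option); it reads off $\bu=\bA^{-1}\nabla\Phi$ and $g=\Phi$, gets the Hamilton--Jacobi inequality from the $\rho$-variation, and has equality where $\rho_t>0$, and your $\inf_{\rho\ge0}$ argument justifies the inequality more cleanly than the paper's formal $\rho$-derivative.

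One slip in your rigor aside: $\Phi\equiv -C$ is not strictly feasible for the dual constraint, since it gives $\partial_t\Phi+\tfrac{\alpha}{2}\|\nabla\Phi\|^2_{\bA^{-1}}+\tfrac{\beta}{2}\Phi^2=\tfrac{\beta}{2}C^2>0$. Take instead, e.g., $\Phi(t,\bx)=-\epsilon t$ with $\epsilon>0$ small, which makes the left side at most $-\epsilon+\tfrac{\beta}{2}\epsilon^2<0$.
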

\begin{proof}
    Introducing auxiliary variables $\bfm_t = \rho_t\bu_t$ and $H_t = \rho_t g_t$ and the lower semicontinuous convex function $J(\rho_t,\bfm_t,H_t)$
    \begin{align}\label{eqn:J_linearlize}
        J(\rho_t,\bfm_t,H_t)=
        \begin{cases}
            \alpha\rho^{-1}_t\|\bfm_t\|_\bA^2 + \beta \rho_t^{-1}H_t^2, \quad & \mathrm{if}\ \rho_t>0, \\
            0, \quad & \mathrm{if}\ (\rho_t,\bfm_t,H_t)=(0,\mathbf{0},0),\\
            +\infty, \quad & \mathrm{if}\ \mathrm{otherwise}.
        \end{cases}
    \end{align}
    Then the Monge UnSyncOT (\ref{eqn:Monge_UnSyncOT_SingleSpace}) can be recast into
    \begin{align}
        \min_{(\rho,\bfm,H)} \left\{\int_0^1\int_X J(\rho,\bfm,H) d\bx dt \ \middle| \ \partial_t\rho + \alpha\nabla\cdot\bfm = \beta H, \rho_{0,1} = \bar{\rho}_{0,1}, \bfm_t\cdot\bn|_{\partial X} =0\right\}.
    \end{align}
    Now we introducing the Lagrange multiplier $\Phi_t$ of the constraints to the Lagrangian
    \begin{align*}
        \mathcal{L}(\rho,\bfm,H;\Phi) = \int_0^1\int_X \frac{\alpha}{2}\rho^{-1}\|\bfm\|_\bA^2 + \frac{\beta}{2}\rho^{-1}H^2 + \Phi(\partial_t\rho + \alpha \nabla\cdot\bfm - \beta H) d\bx dt.
    \end{align*}
    Taking the first-order optimality condition, we have
    \begin{align*}
        \begin{cases}
            -\frac{\alpha}{2}\rho^{-2} \|\bfm\|_\bA^2 - \frac{\beta}{2}\rho^{-2}H^2-\partial_t\Phi \ge0, \\
            \alpha\rho^{-1}\bA\bfm = \alpha\nabla\Phi, \\
            \beta\rho^{-1}H = \beta\Phi.
        \end{cases}
    \end{align*}
    The last two equations in the above system implies (\ref{eqn:Monge_UnSyncOT_dual_u_g}), namely, the velocity-reaction pair $(\bu_t,g_t)$ at optima is related by $\bu_t = \bA^{-1}\nabla g_t$.
    
    Inserting (\ref{eqn:Monge_UnSyncOT_dual_u_g}) into the first equation in the above system leads
    \begin{align*}
        \partial_t\Phi + \frac{\alpha}{2}\|\nabla\Phi\|_{\bA^{-1}}^2 + \frac{\beta}{2}\Phi^2 \le 0.
    \end{align*}
    Additionally, inserting (\ref{eqn:Monge_UnSyncOT_dual_u_g}) into the continuity equation yields
    \begin{align*}
        \partial_t \rho + \alpha\nabla\cdot(\rho \bA^{-1}\nabla\Phi) = \beta\rho\Phi.
    \end{align*}
    Therefore we prove the system (\ref{eqn:Monge_UnSyncOT_dualform}). If $\rho_t > 0$, then the inequality in the $\Phi$-equation becomes equality.
\end{proof}

The dual form in Corollary \ref{corollary:Monge_UnSyncOT_dual} directly implies the reformulation of the single-space Monge UnSyncOT in terms of the dissipation distance.
\begin{corollary}\label{corollary:Monge_UnSyncOT_Onsager}
    In the framework of dissipation distance (\ref{eqn:dissipation_potential}), the single-space Monge unbalanced synchronized optimal transport (\ref{eqn:Monge_UnSyncOT_SingleSpace}) can be recast into the form (\ref{eqn:dissipation_distance_K})
    with the Onsager operator $\mathbb{K}$ given as  $\mathbb{K}(\rho_t)g_t = -\alpha\nabla\cdot(\rho_t\bA^{-1}\nabla g_t) + \beta\rho_t g_t$. This is one of the cases in List \ref{eqn:K_operators}.
\end{corollary}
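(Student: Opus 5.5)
The plan is to read the Onsager operator directly off the optimality system of Corollary \ref{corollary:Monge_UnSyncOT_dual}, and then to check that the two variational problems agree. By Corollary \ref{corollary:Monge_UnSyncOT_dual}, any minimizer of (\ref{eqn:Monge_UnSyncOT_SingleSpace}) satisfies $\bu_t=\bA^{-1}\nabla g_t$ with $g_t=\Phi_t$. Substituting this into the constraint of $\mathcal{C}_{\mathrm{WFR}_{\alpha,\beta}}$ gives
\begin{align*}
\partial_t\rho_t = -\alpha\nabla\cdot(\rho_t\bA^{-1}\nabla g_t)+\beta\rho_t g_t =: \mathbb{K}(\rho_t)g_t .
\end{align*}
This is exactly the constraint $\partial_t\rho=\mathbb{K}(\rho)g$ in (\ref{eqn:dissipation_distance_K}). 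Note that the homogeneous Neumann condition $\bA^{-1}\nabla g\cdot\bn=0$ is inherited from $\bu\cdot\bn|_{\partial X}=0$.

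Next I will verify that the action matches. Integrating by parts with the Neumann condition,
\begin{align*}
\langle g,\mathbb{K}(\rho)g\rangle=\int_X \alpha\rho\,\nabla g^{\mathrm T}\bA^{-1}\nabla g+\beta\rho g^2\,d\bx .
\end{align*}
When $\bu=\bA^{-1}\nabla g$, we have $\|\bu\|_\bA^2=\|\nabla g\|_{\bA^{-1}}^2$, so this equals the integrand of (\ref{eqn:Monge_UnSyncOT_SingleSpace}). Hence the infimum in (\ref{eqn:dissipation_distance_K}) with this $\mathbb{K}$ is at least $\mathrm{USOT}_{\mathrm M}$: every pair $(\rho,g)$ admissible there yields the admissible triple $(\rho,\bA^{-1}\nabla g,g)\in\mathcal{C}_{\mathrm{WFR}_{\alpha,\beta}}$ with the same action. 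The positive-definiteness of $\bA=c_1\mathbf I+c_2(\nabla\bT)^{\rm T}\nabla\bT$ guarantees that $\bA^{-1}$ exists and that $\mathbb{K}(\rho)$ is symmetric and nonnegative. For the reverse inequality, I will use Corollary \ref{corollary:Monge_UnSyncOT_dual}: the minimizer has exactly this gradient structure, so the infimum over the larger class is attained inside the smaller one.

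The main obstacle is making the reverse inequality rigorous without assuming a minimizer exists. To handle it, I will instead argue pointwise in time. Fix $\rho_t$ and $\dot\rho_t$, and minimize $\int\alpha\rho\|\bu\|_\bA^2+\beta\rho g^2$ subject to $-\alpha\nabla\cdot(\rho\bu)+\beta\rho g=\dot\rho$. The idea is to decompose $\rho\bA\bu$ into a part $\rho\nabla\tilde g$ plus a remainder, analogously to the Helmholtz decomposition in the weighted space $L^2(\rho\bA)$. Concretely, let $\tilde g$ solve $\mathbb{K}(\rho)\tilde g=\dot\rho$. This is an elliptic problem with coercive bilinear form $\langle\cdot,\mathbb{K}(\rho)\cdot\rangle$, solvable by Lax--Milgram on the weighted space. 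Then set $\bu=\bA^{-1}\nabla\tilde g+\mathbf r$ and $g=\tilde g+s$, where $(\mathbf r,s)$ satisfies $-\alpha\nabla\cdot(\rho\mathbf r)+\beta\rho s=0$. Testing this homogeneous equation against $\tilde g$ kills the cross terms, so the action equals $\langle\tilde g,\mathbb K\tilde g\rangle+\int\alpha\rho\|\mathbf r\|_\bA^2+\beta\rho s^2$, which is at least $\langle\tilde g,\mathbb K\tilde g\rangle$. This gives the reverse inequality for every admissible curve, independent of existence of minimizers, and completes the identification.
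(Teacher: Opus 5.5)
Your proposal is correct. Its first paragraph is exactly the paper's argument. The paper justifies the corollary only by saying that the optimality system of Corollary \ref{corollary:Monge_UnSyncOT_dual} ($\bu_t=\bA^{-1}\nabla\Phi_t$, $g_t=\Phi_t$) ``directly implies'' it: one substitutes the gradient structure into the WFR constraint and reads off $\mathbb{K}$.

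Where you go beyond the paper is the two-sided comparison. The paper's route inherits the formal character of Corollary \ref{corollary:Monge_UnSyncOT_dual}, which comes from a Lagrangian computation that presupposes a minimizer (and $\rho_t>0$ for the equality case). In exchange it is short and also yields the Hamilton--Jacobi relation for $\Phi_t$. Your fixed-time projection onto gradient-type pairs $(\bA^{-1}\nabla\varphi,\varphi)$ works in the weighted space with norm $\|\varphi\|_{\mathbb{K}}^2=\int_X(\alpha\rho\|\nabla\varphi\|_{\bA^{-1}}^2+\beta\rho\varphi^2)\,d\bx$. This is the Otto/Liero--Mielke--Savar\'e argument that the paper only quotes in Section \ref{section:Overview} for general mobility and reaction; your case is $M=\alpha\rho\bA^{-1}$, $H=\beta\rho$ after $\bu\mapsto\bA\bu$. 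It is also the same inner-minimization device the paper uses for the Kantorovich case in Corollary \ref{corollary:K_UnSyncOT_dissipation}. Its advantages are that it applies to every admissible curve, needs no existence theory, and identifies $\Psi_\rho(\dot\rho)=\langle\mathbb{K}(\rho)^{-1}\dot\rho,\dot\rho\rangle$ pointwise in time. So it also justifies $\mathbb{G}=\mathbb{K}^{-1}$ in the form (\ref{eqn:dissipation_potential}).

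A few points to make explicit when you write it up:
\begin{itemize}
\item Lax--Milgram here is just the Riesz representation theorem, since $\langle\cdot,\mathbb{K}(\rho)\cdot\rangle$ is the inner product of that space.
\item The hypothesis you must check is that $\dot\rho$ is a bounded functional on this space. For any triple of finite action this follows from $|\langle\dot\rho,\varphi\rangle|\le\big(\int_X(\alpha\rho\|\bu\|_{\bA}^2+\beta\rho g^2)\,d\bx\big)^{1/2}\|\varphi\|_{\mathbb{K}}$. This estimate by itself already gives $\langle\tilde g,\mathbb{K}(\rho)\tilde g\rangle\le$ the action, without writing out the cross terms.
\item Where $\rho_t$ vanishes, $\tilde g$ is only an element of the completion, so the integration by parts that kills the cross terms should be read through smooth approximants in the weighted norm.
\item Coercivity of the zero-order part uses $\beta>0$, which holds in the unbalanced setting.
\end{itemize}
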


\subsection{Unbalanced Synchronized Optimal Transport in Kantorovich Form}\label{subsection:SyncOT_Kantorovich}

In the Kantorovich UnSyncOT, we assume $X\subseteq \bbR^m$ and $Y\subseteq \bbR^n$ to be open and convex. The maps $\mathcal{T}_\mathrm{K}$ and $\mathcal{T}_\mathrm{K}^*$ are defined as
\begin{align}
    \mathcal{T}_{\mathrm{K}}: \mathcal{M}_+(X) \rightarrow \mathcal{M}_+(Y), \quad (\mathcal{T}_{\mathrm{K}}\rho)(\by) = \int_X \rho(\bx) \pi(\bx,\by) d\bx, \label{eqn:TK}\\
    \mathcal{T}_{\mathrm{K}}^*: \mathcal{M}_+(Y) \rightarrow \mathcal{M}_+(X), \quad (\mathcal{T}_{\mathrm{K}}^*\xi)(\bx) = \int_Y \xi(\by) \pi(\bx,\by) d\by. \label{eqn:TK_star}
\end{align}
Here $\pi\in\mathcal{M}_+(X \times Y)$ is a given joint measure between $X$ and $Y$ with compact support. We require that $\pi(\bx,\by)$ is a Markov kernel,
\begin{align}\label{eqn:Markov_kernel}
\int_Y \pi(\bx,\by) d\by = 1, \quad \forall \bx \in X,
\end{align}
such that later when defining the relation between $\rho_t$ and $\xi_t$ in equation \reff{eqn:KantorovichUnSynOT_constraints}, the mass is conserved across the spaces: $\int_X \rho(\mathrm{d}\mathbf{x})= \int_Y \xi(\mathrm{d}\mathbf{y})$.

We consider the Kantorovich UnSyncOT as follows:
\begin{align}\label{eqn:KantorovichUnSynOT}
\mathrm{USOT}_{\mathrm{K}}(\bar{\rho}_0,\bar{\rho}_1) := \inf_{(\rho,\mathbf{u},g;\xi,\bv,h)\in\mathcal{C}^{\mathrm{K}}_{\mathrm{USOT}} } A_{\mathrm{USOT}}^{\mathrm{K}} = \int_0^1 \bigg(& c_1 \int_{X} \Big[ \alpha\|\mathbf{u}_t\|^2 
+ \beta g_t^2 \Big] \rho_t d\bx   \nonumber\\
+ & c_2 \int_{Y}  \Big[ \alpha \|\mathbf{v}_t\|^2 +  \beta h_t^2 \Big] \xi_t d\by  \bigg)dt,
\end{align}
with the feasible set $\mathcal{C}^{\mathrm{K}}_{\mathrm{USOT}}(\bar{\rho}_0,\bar{\rho}_1)$ defined as
\begin{align}\label{eqn:KantorovichUnSynOT_constraints}
  \mathcal{C}^{\mathrm{K}}_{\mathrm{USOT}} = \left\{ (\rho,\mathbf{u},g;\xi,\bv,h) \middle| 
  \begin{array}{l}
  \partial_t\rho + \alpha\nabla\cdot(\rho\mathbf{u}) = \beta \rho g \mathrm{\ in\ } \mathcal{D}'(X), \
  \rho_{0,1} = \bar{\rho}_{0,1}, \  \bu\cdot\mathbf{n}|_{\partial X} = 0,\\
  \partial_t\xi + \alpha\nabla\cdot(\xi \mathbf{v}) = \beta\xi h \mathrm{\ in\ } \mathcal{D}'(Y),\ \bv\cdot\mathbf{n}|_{\partial Y} = 0,\\
  \xi_t(\mathbf{y}) = (\TK\rho_t)(\by) = \int_{X} \rho_t(\bx)\pi(\bx,\by) d\bx, \\
  \xi_t(\by)h_t(\by) = (\TK(\rho_tg_t))(\by).
  \end{array}
  \right\}
\end{align}

\begin{theorem}\label{theorem:Kantorovich_UnSyncOT}
The Kantorovich unbalanced synchronized optimal transport (\ref{eqn:KantorovichUnSynOT}) is equivalent to the following single-space unbalanced dynamical OT:
\begin{align}\label{eqn:KantorovichUnSynOT_onespace}
\mathrm{USOT}_{\mathrm{K}}(\bar{\rho}_0,\bar{\rho}_1) = \inf_{(\rho,\mathbf{u},g)\in\mathcal{C}_{\mathrm{WFR}_{\alpha,\beta}}} \ \int_0^1 \bigg[ & c_1 \int_{X} \Big( \alpha\|\mathbf{u}_t\|^2 + \beta g_t^2 \Big) \rho_t d\bx   \nonumber \\
 + & c_2 \int_X  \Big(\alpha \TK^*(\|\nabla\Phi_t\|^2) + \beta \TK^* (h_t^2) \Big) \rho_t d\bx   \bigg] dt,
\end{align}
in which $\mathcal{C}_{\mathrm{WFR}_{\alpha,\beta}}$ is defined in \eqref{eqn:feasible_WFR_flat}, and $\Phi_t$ is defined as
\begin{align}\label{eqn:Phi}
    \Phi_t(\by) = (-\Delta_{\xi_t})^{-1}\left( \int_X -\nabla\cdot (\rho_t(\bx)\mathbf{u}_t(\bx))\pi(\bx,\by)d\bx \right) = (-\Delta_{\xi_t})^{-1}\Big(\TK(-\nabla\cdot(\rho_t\bu_t))(\by)\Big),
\end{align}
and $h_t$ is defined in the $\mathcal{C}^{\mathrm{K}}_{\mathrm{USOT}}$ condition (\ref{eqn:KantorovichUnSynOT_constraints}). Besides, we have the following bound estimate for Kantorovich UnSyncOT,
\begin{align}\label{eqn:Kantorovich_UnSyncOT_bound}
    c_1 \mathrm{WFR}_{\alpha,\beta,X}^2(\bar{\rho}_0,\bar{\rho}_1) + c_2 \mathrm{WFR}_{\alpha,\beta,Y}^2(\TK\bar{\rho}_0,\TK\bar{\rho}_1) \le \mathrm{USOT}_{\mathrm{K}}(\bar{\rho}_0,\bar{\rho}_1) \le c \mathrm{WFR}_{\alpha,\beta,X}^2(\bar{\rho}_0,\bar{\rho}_1),
\end{align}
in which $c = \max\{ c_1+c_2\Lambda^2,1 \}$ with $\Lambda = \mathrm{esssup}_{t\in(0,1)}\|\TK\|_{T_{\rho_t}\rightarrow T_{\TK\rho_t}}$.
\end{theorem}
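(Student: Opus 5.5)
The plan has three parts: reduce the secondary action to a functional on $X$, show that optimizing over $\bv$ produces the gradient field $\nabla\Phi_t$ of (\ref{eqn:Phi}), and then derive the two inequalities in (\ref{eqn:Kantorovich_UnSyncOT_bound}).

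\textbf{Reduction.} Fix a feasible primary triple $(\rho,\bu,g)\in\mathcal{C}_{\mathrm{WFR}_{\alpha,\beta}}$. Then $\xi_t=\TK\rho_t$ and $\xi_th_t=\TK(\rho_tg_t)$ are determined. Apply $\TK$ to the primary equation $\partial_t\rho+\alpha\nabla\cdot(\rho\bu)=\beta\rho g$; this is legitimate because $\TK$ is linear and acts only in $\bx$. It gives
\begin{align*}
\partial_t\xi_t-\beta\xi_th_t=\alpha\,\TK\big(-\nabla\cdot(\rho_t\bu_t)\big).
\end{align*}
The secondary constraint therefore reduces to $-\nabla\cdot(\xi_t\bv_t)=\TK(-\nabla\cdot(\rho_t\bu_t))$ with $\bv\cdot\bn|_{\partial Y}=0$. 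Only $\bv$ remains free, so minimizing the secondary cost over $\bv$ is exactly the Wasserstein-norm problem (\ref{eqn:WassersteinNorm}) at $\xi_t$. Its minimizer is the gradient field $\bv_t=\nabla\Phi_t$, with $\Phi_t=(-\Delta_{\xi_t})^{-1}\TK(-\nabla\cdot(\rho_t\bu_t))$. Compatibility holds: the source has zero mean, since $\int_Y\pi\,d\by=1$ and the normal flux vanishes.

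The cost identities then follow by Fubini:
\begin{align*}
\int_Y\|\nabla\Phi_t\|^2\xi_t\,d\by=\int_X\TK^*(\|\nabla\Phi_t\|^2)\rho_t\,d\bx,\qquad \int_Y h_t^2\xi_t\,d\by=\int_X\TK^*(h_t^2)\rho_t\,d\bx.
\end{align*}
Taking the infimum over the primary triple gives (\ref{eqn:KantorovichUnSynOT_onespace}). For the converse, any primary minimizer together with $\bv=\nabla\Phi$ is feasible in $\mathcal{C}^{\mathrm{K}}_{\mathrm{USOT}}$, so the two infima coincide.

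\textbf{Lower bound.} Take any feasible tuple. The primary part is an admissible WFR competitor in $X$, so its cost is at least $c_1\mathrm{WFR}^2_{\alpha,\beta,X}(\bar\rho_0,\bar\rho_1)$. The secondary triple $(\xi,\bv,h)$ satisfies the WFR constraints in $Y$ with endpoints $\TK\bar\rho_{0,1}$, so its cost is at least $c_2\mathrm{WFR}^2_{\alpha,\beta,Y}(\TK\bar\rho_0,\TK\bar\rho_1)$. Summing and taking the infimum gives the left inequality.

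\textbf{Upper bound.} This is the main obstacle. Take a WFR-optimal (or $\varepsilon$-optimal) $(\rho,\bu,g)$ in $X$ and insert it into (\ref{eqn:KantorovichUnSynOT_onespace}). The secondary cost must then be bounded by the primary cost.

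For the transport part, I interpret $\Lambda$ as the operator norm of $\TK$ from $(T_{\rho_t},\|\cdot\|_{-1,\rho_t})$ to $(T_{\TK\rho_t},\|\cdot\|_{-1,\TK\rho_t})$. This gives
\begin{align*}
\int_Y\|\nabla\Phi_t\|^2\xi_t=\big\|\TK(\nabla\cdot(\rho_t\bu_t))\big\|^2_{-1,\xi_t}\le\Lambda^2\|\nabla\cdot(\rho_t\bu_t)\|^2_{-1,\rho_t}\le\Lambda^2\int_X\|\bu_t\|^2\rho_t.
\end{align*}

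For the growth part, apply Cauchy--Schwarz (Jensen) against the probability measure $\rho_t(\bx)\pi(\bx,\by)\,d\bx/\xi_t(\by)$:
\begin{align*}
h_t^2(\by)\le\frac{\TK(\rho_tg_t^2)(\by)}{\xi_t(\by)},\qquad\text{hence}\qquad \int_Yh_t^2\xi_t\,d\by\le\int_X g_t^2\rho_t\,d\bx,
\end{align*}
using the Markov property $\int_Y\pi(\bx,\by)\,d\by=1$.

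Combining, the total cost is at most
\begin{align*}
(c_1+c_2\Lambda^2)\,\alpha\!\int\!\!\int\|\bu\|^2\rho+(c_1+c_2)\,\beta\!\int\!\!\int g^2\rho.
\end{align*}
Since $c_1+c_2=1$, this is at most $\max\{c_1+c_2\Lambda^2,1\}\,\mathrm{WFR}^2_{\alpha,\beta,X}(\bar\rho_0,\bar\rho_1)$. The delicate points are making the definition of $\Lambda$ precise in this sense and justifying division by $\xi_t$; where $\xi_t=0$, one has $\rho_t\pi=0$ there as well.
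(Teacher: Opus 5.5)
Your proposal is correct and follows essentially the same route as the paper. The reduction applies $\TK$ to the primary equation (the paper does this weakly by pairing with $\TK^*\phi$) and then uses Otto's minimal-energy argument to get $\bv_t=\nabla\Phi_t$. The lower bound comes from feasibility of the primary and secondary triples, and the upper bound from the operator norm $\Lambda$ for transport plus a conditional-expectation argument for the reaction term. Your Jensen step $h_t^2\le \TK(\rho_t g_t^2)/\xi_t$ is just the inequality form of the paper's exact identity $\xi_t h_t^2=\TK(\rho_t g_t^2)-\xi_t\,\mathrm{Var}_{\bx|\by}(g_t)$, so nothing is lost for the bound.
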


\begin{proof}
    Firstly we prove the single-space reformulation of the Kantorovich UnSyncOT (\ref{eqn:KantorovichUnSynOT_onespace}).
    By the relation between $\xi_t = \TK \rho_t$, the total cost in the secondary space becomes
    \begin{align}\label{eqn:Tpush_eqn_Kantorovich}
       \alpha\int_{Y} \|\bv_t\|^2 (\TK\rho_t) d\by + \beta\int_{Y} h_t^2 (\TK\rho_t) d\by 
       = \ &\alpha\int_{X}  \TK^*(\|\mathbf{v}_t\|^2) \rho_t d\bx + \beta  \int_X \TK^*(h_t^2)  \rho_t d\bx.
    \end{align}
    Note that $\bv_t$ satisfies the continuity equation with source term 
    in the distributional sense, hence for any test function $\phi\in C_{c}^{\infty}(Y)$, it holds that
    \[
    \frac{d}{dt} \int_{Y}  \xi_t(\mathbf{y}) \phi(\mathbf{y})d\by
    =  \alpha\int_{Y} - \nabla\cdot\left(\xi_t(\mathbf{y})\bv_t(\mathbf{y})\right)\phi(\mathbf{y}) d\by + \beta \int_{Y} \xi_t(\mathbf{y})h_t(\by)\phi(\mathbf{y}) d\by.
    \]
    Since $\phi\in C_{c}^{\infty}(Y)$ and $\pi$ is of compact support, we have $\TK^*\phi \in C_c^{\infty}(X)$. It follows that
    \begin{align*}
    &\mathrm{LHS} =  \frac{d}{dt} \langle \TK^*\phi, \rho_t \rangle_{L^2(X)} = 
    \alpha \big\langle \TK^*\phi, -\nabla\cdot(\rho_t\bu_t) \big\rangle_{L^2(X)} +
    \beta \big\langle \TK^*\phi, \rho_t g_t \big\rangle_{L^2(X)}\\
     &\hspace{1.6in}= \alpha \big\langle \phi, \TK(-\nabla\cdot(\rho_t\bu_t)) \big\rangle_{L^2(Y)} +
    \beta \big\langle \phi, \TK(\rho_t g_t) \big\rangle_{L^2(Y)}\\
    &\mathrm{RHS}= \alpha \big\langle \phi, -\nabla\cdot(\xi_t\bv_t) \big\rangle_{L^2(Y)} + \beta \big\langle \phi,  \xi_t h_t \big\rangle_{L^2(Y)}.
    \end{align*}
    Feasibility condition (\ref{eqn:KantorovichUnSynOT_constraints}) implies that 
    $
    \xi_t h_t = \TK(\rho_t g_t),
    $
    therefore by comparing the two sides, $\mathbf{v}_t$ satisfies
    \begin{align}\label{eqn:v_Kantorovich}
     -\nabla\cdot(\xi_t(\by)\bv_t(\by)) = \TK(-\nabla\cdot(\rho_t \bu_t))(\by) = \int_{X}-\nabla\cdot(\rho_t(\bx)\bu_t(\bx)) \pi(\bx,\by) d\bx.
    \end{align}
    Using Otto's argument \cite{figalli2021invitation}, among all velocity fields $\bv_t$ satisfying (\ref{eqn:v_Kantorovich}),  there is a unique optimal one with smallest $L^2(\xi_t;Y)$-norm $\int_{Y} \|\bv_t(\by)\|^2 \xi_t(d\mathbf{y})$:  $\mathbf{v}_t = \nabla\Phi_t$ in which $\Phi_t(\by)$ satisfies
    \begin{align}\label{eqn:UnSyncOT_2nd_v}
        \begin{cases}
            -\nabla\cdot(\xi_t(\by)\nabla\Phi_t(\by)) = \TK(-\nabla\cdot(\rho_t \bu_t))(\by), \quad &\mathrm{in}\ Y, \\
            \frac{\partial\Phi_t}{\partial \mathbf{n}} = 0, \quad &\mathrm{on} \ \partial Y.
        \end{cases}
    \end{align}
 Denoting $(-\Delta_{\xi_t})\Phi_t(\by): = -\nabla\cdot(\xi_t(\by)\nabla\Phi_t(\by))$,  we obtain (\ref{eqn:Phi}).

 Now we prove the bound estimate (\ref{eqn:Kantorovich_UnSyncOT_bound}). The lower bound is straightforward. Indeed, taking any feasible point $(\rho,\bu,g;\xi,\bv,h) \in \mathcal{C}^{\mathrm{K}}_{\mathrm{USOT}}(\bar{\rho}_0,\bar{\rho}_1)$, we have that $(\rho,\bu,g)\in \mathcal{C}_{\mathrm{WFR}_{\alpha,\beta,X}}(\bar{\rho}_0,\bar{\rho}_1)$ and $(\xi,\bv,h)\in \mathcal{C}_{\mathrm{WFR}_{\alpha,\beta,Y}}(\TK\bar{\rho}_0,\TK\bar{\rho}_1)$, then it follows that
 \begin{align*}
     c_1 \mathrm{WFR}_{\alpha,\beta,X}^2(\bar{\rho}_0,\bar{\rho}_1) + c_2 \mathrm{WFR}_{\alpha,\beta,Y}^2(\TK\bar{\rho}_0,\TK\bar{\rho}_1) \le 
     A_{\mathrm{USOT}}^{\mathrm{K}}[\rho,\bu,g;\xi,\bv,h],
 \end{align*}
 taking infimum over $\mathcal{C}^{\mathrm{K}}_{\mathrm{USOT}}$ on the right hand side yields the lower bound estimate.

 For the upper bound, we consider the transport and reaction parts separately. We again consider a feasible point $(\rho,\bu,g;\xi,\bv,h) \in \mathcal{C}^{\mathrm{K}}_{\mathrm{USOT}}$, and write $f_t(\bx) = -\nabla\cdot(\rho_t \bu_t)\in T_{\rho_t}$. Here $T_{\rho_t}$ represents the tangent space of the Wasserstein space $(\mathcal{P}_2(X), W_2)$ at $\rho_t$. In the secondary space $Y$, the unique minimal-energy velocity is $\bv_t = \nabla\Phi_t$ with $-\Delta_{\xi_t}\Phi_t = \TK f_t$ as given from (\ref{eqn:v_Kantorovich}), and $\TK f_t \in T_{\xi_t}$. Then the secondary transport cost in (\ref{eqn:KantorovichUnSynOT}) or (\ref{eqn:KantorovichUnSynOT_onespace}) is
 \begin{align*}
     \int_Y \xi_t\|\bv_t\|^2 d\by = \int_Y \xi_t \|\nabla\Phi_t\|^2 d\by = \|\TK f_t\|_{-1,\xi_t}^2
 \end{align*}
 in which $\|\TK f_t\|_{-1,\xi_t}^2$ is the Wasserstein norm defined in (\ref{eqn:WassersteinNorm}). For fixed $t\in(0,1)$, we define the norm of $\TK: T_{\rho_t} \rightarrow T_{\xi_t}$ as
 \begin{align*}
     \|\TK\|:=\|\TK\|_{T_{\rho_t}\rightarrow T_{\TK\rho_t}}: = \sup_{0\neq h \in T_{\rho_t}} \frac{\|\TK h\|_{-1,\TK\rho_t}}{\|h\|_{-1,\rho_t}}, \quad \Lambda = \mathrm{esssup}_{t\in(0,1)} \|\TK\|_{T_{\rho_t}\rightarrow T_{\TK\rho_t}},
 \end{align*}
 then it follows that
 \begin{align}\label{eqn:Transport_part}
     \int_Y \xi_t\|\bv_t\|^2 d\by = \|\TK f_t\|_{-1,\xi_t}^2 \le \|\TK\|^2 \|f_t\|^2_{-1,\rho_t} \le \|\TK\|^2 \int_X \rho_t \|\bu_t\|^2 d\bx \le \Lambda^2 \int_X \rho_t \|\bu_t\|^2 d\bx,
 \end{align}
 in which the second inequality is due to the definition of the Wasserstein norm (\ref{eqn:WassersteinNorm}).

 For the reaction part, again we let $\xi_t = \TK \rho_t$. According to the feasibility condition (\ref{eqn:KantorovichUnSynOT_constraints}), the reaction cost in secondary space $Y$ given in (\ref{eqn:KantorovichUnSynOT}) or  (\ref{eqn:KantorovichUnSynOT_onespace}) can be rewritten as:
 \begin{align*}
     \int_Y \xi_t h_t^2 d\by = \int_{Y}   \frac{(\mathcal{T}_{\mathrm{K}}(\rho_t g_t) )^2}{\mathcal{T}_{\mathrm{K}}\rho_t} d\by.
 \end{align*}
 Given a fixed $t \in (0,1)$, for all $\by$ such that $\xi_t(\by)>0$, we define the conditional probability on $X$
    \begin{align*}
        Q(d\bx|\by):=\frac{\rho_t(\bx)\pi(\bx,\by)}{\xi_t(\by)}d\bx.
    \end{align*}
    Then for any integrable $f:X\rightarrow\mathbb{R}$,
    \begin{align*}
        \mathbb{E}[f|\by] 
        = \int_X f(\bx)Q(d\bx|\by)
        = \frac{\TK(\rho_t f)(\by)}{\TK\rho_t(\by)}.
    \end{align*}
    In particular
    \begin{align*}
        \mathbb{E}[g_t|\by] 
        = \frac{\TK(\rho_t g_t)}{\TK \rho_t} = h_t(\by), \quad
        \mathbb{E}[g_t^2|\by] 
        = \frac{\TK(\rho_t g_t^2)}{\TK \rho_t},        
    \end{align*}
    namely, $h_t(\by)$ is the conditional mean of $g_t$ given $\by$.
    Define the conditional variance at $\by$,
    \begin{align*}
        \mathrm{Var_{\bx|\by}}(g_t):=
        \mathbb{E}[g_t^2|\by] - (\mathbb{E}[g_t|\by])^2,
    \end{align*}
    it follows that
    \begin{align}\label{eqn:var}
        \frac{(\TK(\rho_t g_t)(\by))^2}{\TK\rho_t(\by)} 
        = \TK(\rho_t g_t^2)(\by) - \TK \rho_t(\by) \mathrm{Var}_{\bx|\by}(g_t). 
    \end{align}
    By convention, on set $\{ \xi_t=0 \}$ we define $\mathrm{Var}_{\bx|\by}(g_t)=0$ and $\TK(\rho_t g_t) = 0$, then the equation (\ref{eqn:var}) still holds.
    Taking integral on two sides of (\ref{eqn:var}), and using $\int_Y \pi(\bx,\by)d\by = 1$, we have
    \begin{align}
        \int_Y \xi_t h_t^2 d\by = \int_Y \frac{(\TK(\rho_t g_t)(\by))^2}{\TK\rho_t(\by)} d\by
        &= \int_Y \TK(\rho_t g_t^2)(\by) d\by - \int_Y \TK \rho_t(\by) \mathrm{Var}_{\bx|\by}(g_t) d\by \nonumber\\
        &= \int_X \rho_t g_t^2 d\bx - \int_Y \TK \rho_t(\by) \mathrm{Var}_{\bx|\by}(g_t) d\by \le \int_X \rho_t g_t^2 d\bx, \label{eqn:Reaction_part}
    \end{align}
    which implies that the reaction cost in the secondary space is no greater than that in the primary space. Combining the domination of the transport cost in the primary space over that in the secondary space in equation (\ref{eqn:Transport_part}) and the domination of the reaction cost in the primary space over that in the secondary space in equation (\ref{eqn:Reaction_part}), and taking the necessary infimum, we get the upper bound estimate (\ref{eqn:Kantorovich_UnSyncOT_bound}).
\end{proof}

\begin{corollary}\label{corollary:K_UnSyncOT_dissipation}
    The Kantorovich unbalanced synchronized optimal transport (\ref{eqn:KantorovichUnSynOT}) can be rewritten in the form of dissipation potential (\ref{eqn:dissipation_potential})
    with $\Psi(\rho,\sigma) = \Psi_{\rho}(\sigma)$ being a quadratic form 
    \begin{align}\label{eqn:K_UnSyncOT_dissipationpotential}
        \Psi_{\rho}(\sigma) = \min_g &\ c_1 \left(  \alpha \left\| \frac{\sigma-\beta\rho g}{\alpha}\right\|_{-1,\rho}^2 + \beta \int_X \rho g^2 d\bx \right)  \notag\\
        &+ c_2 \left( \alpha \left\| \TK\left( \frac{\sigma-\beta\rho g}{\alpha} \right) \right\|_{-1,{\TK}\rho}^2 + \beta \int_Y \frac{(\TK(\rho g))^2}{\TK\rho} d\by \right).
    \end{align}
\end{corollary}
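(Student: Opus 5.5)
We will obtain the corollary from Theorem \ref{theorem:Kantorovich_UnSyncOT} by a two-level minimization. For a fixed absolutely continuous curve $\rho_t$ with $\dot\rho_t=\sigma$, we first minimize the action over all admissible $(\bu,g;\xi,\bv,h)$, and then minimize over the curve. Since the constraints are linear and the action has no explicit time dependence, the inner infimum decouples pointwise in $t$. It therefore suffices to show that, for each $t$,
\[
\inf\{\text{instantaneous integrand of } A^{\mathrm K}_{\mathrm{USOT}} \text{ over admissible } (\bu,g,\bv,h) \text{ at fixed } (\rho,\sigma)\}=\Psi_\rho(\sigma).
\]
Then $\mathrm{USOT}_{\mathrm K}(\bar\rho_0,\bar\rho_1)=\inf_\rho\int_0^1\Psi(\rho,\dot\rho)\,dt$, which is (\ref{eqn:dissipation_potential}).

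\textbf{Step 1 (eliminating $\bu$).} Fix $g$. The primary constraint $\sigma+\alpha\nabla\cdot(\rho\bu)=\beta\rho g$ says that $-\nabla\cdot(\rho\bu)=f:=(\sigma-\beta\rho g)/\alpha$. We need to justify that $f\in T_\rho$. Testing with constants gives $\int f\,d\bx=0$ by the no-flux condition, so $f$ lies in the range of $-\Delta_\rho$; this compatibility will be stated as part of the domain of $\Psi$, with $\Psi=+\infty$ otherwise. The key coupling is that the secondary velocity depends on $\bu$ only through $f$. By (\ref{eqn:v_Kantorovich}), $-\nabla\cdot(\xi\bv)=\TK f$. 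Minimizing the secondary transport term over $\bv$ then gives $\|\TK f\|^2_{-1,\TK\rho}$ via (\ref{eqn:UnSyncOT_2nd_v}), independently of which $\bu$ realizes $f$. The primary term $\int_X\rho\|\bu\|^2\,d\bx$ is minimized over $\bu$ with divergence $f$ by Otto's argument, exactly as in (\ref{eqn:WassersteinNorm}), and equals $\|f\|^2_{-1,\rho}$. Because the two infima are attained by independent choices, the joint minimum over $(\bu,\bv)$ is the sum:
\[
c_1\alpha\|f\|_{-1,\rho}^2+c_2\alpha\|\TK f\|^2_{-1,\TK\rho},
\]
where the factor $\alpha$ arises from $\alpha\|\bu\|^2$ with $\nabla\cdot(\rho\bu)$ scaled accordingly.

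\textbf{Step 2 (the reaction terms).} The constraint $\xi h=\TK(\rho g)$ determines $h$ from $g$ and leaves no freedom. Hence $\int_Y\xi h^2\,d\by=\int_Y(\TK(\rho g))^2/\TK\rho\,d\by$, with the same convention on $\{\xi=0\}$ as in the proof of the theorem. Adding the primary term $\beta\int_X\rho g^2\,d\bx$ and minimizing over the remaining free variable $g$ yields exactly (\ref{eqn:K_UnSyncOT_dissipationpotential}).

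\textbf{Step 3 (quadratic form and well-posedness).} The main obstacle is showing that $\Psi_\rho$ is a genuine quadratic form in $\sigma$ and that the minimum over $g$ is attained. The objective is jointly quadratic in $(\sigma,g)$. Its $g$-part is bounded below by $c_1\beta\int_X\rho g^2\,d\bx$, which is coercive on $L^2(\rho)$; the remaining terms are nonnegative and continuous. The conditional-variance identity (\ref{eqn:var}) gives the bound $\int_Y(\TK(\rho g))^2/\TK\rho\,d\by\le\int_X\rho g^2\,d\bx$, and $\|\TK\cdot\|$ is bounded by $\Lambda$ as in (\ref{eqn:Transport_part}). Therefore a unique minimizer $g(\sigma)$ exists and depends linearly on $\sigma$. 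Partial minimization of a nonnegative quadratic form over a linear variable is again a nonnegative quadratic form, which gives the claimed structure $\Psi_\rho(\sigma)=\langle\mathbb G(\rho)\sigma,\sigma\rangle$.

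The delicate point is the interchange of $\inf$ over the curve with the pointwise-in-time infima. This requires a measurable selection in $t$ of the optimal $(\bu_t,g_t,\bv_t)$. We will obtain it from the linear, continuous dependence of the minimizers on $(\rho_t,\dot\rho_t)$.
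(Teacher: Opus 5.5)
Your proposal is correct and follows the paper's proof. The paper likewise fixes $\rho$ and $\sigma$, replaces $\bu$ by the Otto minimizer with $-\nabla\cdot(\rho\bu)=(\sigma-\beta\rho g)/\alpha$, and notes that the secondary constraint $-\nabla\cdot(\xi\bv)=\TK\big((\sigma-\beta\rho g)/\alpha\big)$ sees $\bu$ only through this divergence, so the $\bv$-minimization decouples; it then fixes $h$ by $\xi h=\TK(\rho g)$ and leaves only the minimization over $g$.

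Your Step 3 and the measurable-selection remark go beyond the paper, which takes attainment and quadraticity for granted. If you keep them, note that $g\mapsto\|(\sigma-\beta\rho g)/\alpha\|_{-1,\rho}^2$ is convex and lower semicontinuous but not continuous on $L^2(\rho)$. It is $+\infty$ unless $\beta\int_X\rho g\,d\bx=\int_X\sigma\,d\bx$, and this is a constraint on $g$, not on the domain of $\Psi$. So existence should be argued from convexity, lower semicontinuity and the coercivity of $c_1\beta\int_X\rho g^2\,d\bx$, which requires $c_1>0$.
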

\begin{proof}
    Kantorovich UnSyncOT is a minimization for a time-space double integral. For fixed $\rho=\rho_t$, we can consider the inner minimization 
    \begin{align}\label{eqn:K_UnSyncOT_inner_min}
        \min_{(\bu,g,\bv)} c_1 \int_X \alpha\rho\|\bu\|^2 + \beta\rho g^2 d\bx  + c_2 \int_Y \alpha \xi \|\bv\|^2 + \beta \frac{\TK(\rho g)^2}{\xi} d\by,
    \end{align}
    subject to 
    \begin{align*}
        -\alpha\nabla\cdot(\rho\bu) + \beta \rho g = \sigma, \ \bu\cdot\bn|_{\partial X} = 0; \quad
        -\nabla\cdot(\xi \bv) = \TK(-\nabla\cdot(\rho\bu)), \ \bv\cdot\bn|_{\partial Y} = 0
    \end{align*}
    Let $\bu^* = \nabla\phi$ satisfy $-\nabla\cdot(\rho\nabla\phi) = (\sigma-\beta\rho g)/\alpha$ with zero Neumann boundary condition, which makes the Wasserstein norm $\int_X \rho\|\bu\|^2 d\bx = \|(\sigma-\beta\rho g)/\alpha\|^2_{-1,\rho}$. On the other hand, the minimizer $\bv^*$ for $\min_\bv \int_Y \xi\|\bv\|^2 d\by$ satisfies $\bv^* = \nabla\psi$ and $-\nabla(\xi \nabla\psi) = \TK(-\nabla\cdot(\rho\bu))$, in which $-\nabla\cdot(\rho\bu)$ is equal to $(\sigma-\beta\rho g)/\alpha$ and is independent of the choice of $\bu$, therefore $\bv^*$ makes the Wasserstein norm $\int_Y \xi\|\bv\|^2 d\by = \|\TK(\sigma-\beta\rho g)/\alpha\|^2_{-1,\xi} = \|\TK(\sigma-\beta\rho g)/\alpha\|^2_{-1,{\TK}\rho}$. Finally replace $\xi$ by $\TK\rho$ in the last term of (\ref{eqn:K_UnSyncOT_inner_min}) leads to the desired result.
\end{proof}

\begin{remark}
    For Kantorovich UnSyncOT, we are unable to rewrite it in terms of the Onsager operator $\mathbb{K}(\rho)$, therefore we only leave it in terms of the dissipation potential $\Psi_{\rho}(\sigma)=\Psi(\rho,\sigma)$ (\ref{eqn:K_UnSyncOT_dissipationpotential}) as shown in the list \ref{eqn:K_operators}.
\end{remark}

Though Kantorovich UnSyncOT (\ref{eqn:KantorovichUnSynOT}) can be recast into a single-space problem (\ref{eqn:KantorovichUnSynOT_onespace}), computing $\Phi_t$ at each step requires solving an elliptic equation with Neumann boundary condition, which is computationally intensive. To improve the computational efficiency, we instead adopt an approximate formulation. To this end, we introduce a lemma which uses trapezoidal quadrature with respect to the Hellinger-Kantorovich (HK) metric given in \reff{eqn:Hellinger_Kantorovich} \cite{liero2016optimal} to approximate the squared WFR distance. We use a shorter notation $\mathrm{HK}=\mathrm{HK}_{\alpha,\beta}$ in this section.

\begin{lemma}\label{lemma:HK_quadrature}
    Let $\xi_t \in \mathrm{AC}\big([0,1];(\mathcal{M}_+(Y),\mathrm{HK})\big)$ be an absolutely continuous curve in the metric space $(\mathcal{M}_+(Y),\mathrm{HK})$, and the Hellinger-Kantorovich metric derivative $t \rightarrow |\dot{\xi}_t|_{\mathrm{HK}}$ belong to $L^2(0,1)$. Then we have that
    \begin{align}\label{eqn:HK_metric}
        |\dot{\xi}_t|^2_{\mathrm{HK}} = \inf_{(\bv,h)} \left\{ \int_Y \left(\alpha\|\bv_t\|^2 + \beta |h_t|^2\right)\xi_t d\by: \partial_t\xi_t + \alpha\nabla\cdot(\xi_t\bv_t) = \beta\xi_th_t, \ \bv_t\cdot\mathbf{n}|_{\partial Y} = 0  \right\}.
    \end{align}
    For a partition $\Pi=\{ 0=t_0<\cdots < t_N = 1 \}$ with mesh size $|\Pi|:=\max_i \Delta t_i$ and $\Delta t_i = t_{i+1} - t_i$, define the discrete action
    \begin{align}\label{eqn:discrete_action}
        A_{\Pi}(\xi_t): = \sum_{i=0}^{N-1}\frac{\mathrm{HK}^2(\xi_{t_i},\xi_{t_{i+1}})}{\Delta t_i},
    \end{align}
    then we have that for any partition $\Pi$,
    \begin{align}\label{eqn:partition_bound}
        A_{\Pi}(\xi_t) \le \int_0^1 |\dot{\xi}_t|_{\mathrm{HK}}^2 dt,
    \end{align}
    and 
    \begin{align}
        \lim_{|\Pi|\rightarrow 0 } A_{\Pi} (\xi_t) = \int_0^1 |\dot{\xi}_t|_{\mathrm{HK}}^2 dt.
    \end{align}
    In particular, if $\xi_t$ is the constant-speed $\mathrm{HK}$-geodesic between $\xi_0$ and $\xi_1$, then $A_{\Pi}(\xi_t) = \mathrm{HK}^2(\xi_0,\xi_1)$ for any partition $\Pi$.  
\end{lemma}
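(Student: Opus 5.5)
The proof rests on the fact that $(\mathcal{M}_+(Y),\mathrm{HK})$ is a complete geodesic metric space. The key input is the dynamic characterization of HK from \cite{liero2016optimal,liero2018optimal}: $\mathrm{HK}^2_{\alpha,\beta}(\mu_0,\mu_1)$ equals the minimal WFR action $\mathrm{WFR}^2_{\alpha,\beta}(\mu_0,\mu_1)$ of (\ref{eqn:WFR}) over curves joining $\mu_0$ to $\mu_1$ on $[0,1]$. Moreover, for a.e.\ $t$ along an absolutely continuous curve, the metric derivative is realized by the minimal-norm velocity–reaction pair solving the continuity equation with source.

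\emph{Identity (\ref{eqn:HK_metric}).} I would invoke this characterization directly, namely the metric-derivative theorem for HK in \cite{liero2016optimal}, which parallels the Wasserstein case in Ambrosio–Gigli–Savaré. Informally, rescaling the WFR problem to a short interval $[t,t+\delta]$ gives $\mathrm{HK}^2(\xi_t,\xi_{t+\delta})/\delta^2 \to$ the infimum in (\ref{eqn:HK_metric}). The infimum is attained at $\bv_t=\nabla h_t$, by the same Otto-type argument used in Theorem \ref{theorem:Kantorovich_UnSyncOT}. I would not reprove this identity but cite it.

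\emph{Upper bound (\ref{eqn:partition_bound}).} On each subinterval $[t_i,t_{i+1}]$ I would use the definition of the metric derivative, $\mathrm{HK}(\xi_{t_i},\xi_{t_{i+1}})\le\int_{t_i}^{t_{i+1}}|\dot\xi_t|_{\mathrm{HK}}dt$. Then Cauchy–Schwarz gives $\mathrm{HK}^2(\xi_{t_i},\xi_{t_{i+1}})\le \Delta t_i\int_{t_i}^{t_{i+1}}|\dot\xi_t|^2_{\mathrm{HK}}dt$. Dividing by $\Delta t_i$ and summing over $i$ yields (\ref{eqn:partition_bound}).

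\emph{Convergence as $|\Pi|\to0$.} This is the main step. My plan is to show $\liminf_{|\Pi|\to0}A_\Pi(\xi_t)\ge\int_0^1|\dot\xi_t|^2_{\mathrm{HK}}dt$ and combine it with the upper bound. Define the step function $m_\Pi(t)=\mathrm{HK}(\xi_{t_i},\xi_{t_{i+1}})/\Delta t_i$ for $t\in[t_i,t_{i+1})$, so that $A_\Pi=\int_0^1 m_\Pi^2\,dt$. Since $\int_0^1 m_\Pi^2\,dt\le\int_0^1|\dot\xi_t|^2_{\mathrm{HK}}dt$, the family $(m_\Pi)$ is bounded in $L^2$, so along a subsequence $m_\Pi\rightharpoonup m$ weakly in $L^2$. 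For fixed $s<s'$, the triangle inequality in HK bounds $\mathrm{HK}(\xi_s,\xi_{s'})$ by the sum of the partition increments between the nodes nearest $s$ and $s'$, plus two boundary increments that vanish by continuity of $\xi$. This gives $\mathrm{HK}(\xi_s,\xi_{s'})\le\int_s^{s'}m\,dt$. By the minimality property of the metric derivative (it is the smallest $L^1$ function dominating increments), $|\dot\xi_t|_{\mathrm{HK}}\le m$ a.e. Weak lower semicontinuity of the $L^2$ norm then gives $\int_0^1|\dot\xi_t|^2_{\mathrm{HK}}\,dt\le\int_0^1 m^2\,dt\le\liminf A_\Pi$. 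Since every subsequence has a further subsequence with this property, the full limit holds. The delicate point is the boundary-increment bookkeeping, which works because $\xi$ is uniformly continuous in HK.

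\emph{Geodesic case.} If $\xi_t$ is a constant-speed geodesic, then $\mathrm{HK}(\xi_s,\xi_{s'})=|s-s'|\,\mathrm{HK}(\xi_0,\xi_1)$. Each term of $A_\Pi$ is therefore $\Delta t_i\,\mathrm{HK}^2(\xi_0,\xi_1)$, and summing over $i$ gives $A_\Pi(\xi_t)=\mathrm{HK}^2(\xi_0,\xi_1)$ for every partition $\Pi$.
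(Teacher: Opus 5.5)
Your proof is correct. For the identity (\ref{eqn:HK_metric}), the partition bound (\ref{eqn:partition_bound}) and the geodesic case it matches the paper: a citation of Liero--Mielke--Savar\'e, the estimate $\mathrm{HK}(\xi_{t_i},\xi_{t_{i+1}})\le\int_{t_i}^{t_{i+1}}|\dot{\xi}_t|_{\mathrm{HK}}\,dt$ plus Cauchy--Schwarz, and linear scaling along a constant-speed geodesic.

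You genuinely differ in the convergence $A_\Pi(\xi_t)\to\int_0^1|\dot{\xi}_t|^2_{\mathrm{HK}}\,dt$. The paper argues pointwise. It uses $\mathrm{HK}(\xi_{t+h},\xi_t)/h\to|\dot{\xi}_t|_{\mathrm{HK}}$ for a.e.\ $t$, rewrites each summand as $|\dot{\xi}_{t_i}|^2_{\mathrm{HK}}\Delta t+o(\Delta t)$ on a uniform grid, and appeals to dominated convergence and a Riemann-sum limit. That is short but only heuristic as written:
\begin{itemize}
\item the $o(1)$ is not uniform in $t$;
\item the grid points may fall in the exceptional null set;
\item no dominating function is exhibited;
\item Riemann sums of an $L^2$ function need not converge to its integral.
\end{itemize}

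Your argument avoids all of these issues. You write $A_\Pi=\int_0^1 m_\Pi^2\,dt$, extract a weak $L^2$ limit $m$, and prove $\mathrm{HK}(\xi_s,\xi_{s'})\le\int_s^{s'}m\,dt$ by the triangle inequality and uniform continuity. You then invoke the minimality clause of Theorem 1.1.2 in Ambrosio--Gigli--Savar\'e and weak lower semicontinuity. This uses only the reference the paper already cites, handles arbitrary nonuniform partitions at once, and is rigorous.

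It even yields more. Since $m\ge|\dot{\xi}|_{\mathrm{HK}}$ a.e.\ and $\|m\|_{L^2}\le\liminf\|m_\Pi\|_{L^2}\le\||\dot{\xi}|_{\mathrm{HK}}\|_{L^2}$, the weak limit must equal $|\dot{\xi}|_{\mathrm{HK}}$. Hence $m_\Pi\to|\dot{\xi}|_{\mathrm{HK}}$ strongly in $L^2$, and the subsequence bookkeeping becomes unnecessary.

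One minor caveat: your aside that the infimum in (\ref{eqn:HK_metric}) is attained at $\bv_t=\nabla h_t$ is formal. In general the optimizer lies in the $L^2(\xi_t)$-closure of such gradient pairs, but nothing in your proof depends on it.
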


\begin{proof}
    The equation (\ref{eqn:HK_metric}) is a direct result of Theorem 17 and Theorem 18 in \cite{liero2016optimal}.
    By the comprehensive study on Hellinger-Kantorovich distance in \cite{liero2016optimal,liero2018optimal} and the property of metric derivative in \cite{ambrosio2005gradient} Theorem 1.1.2, we have that for any $0<r<s<1$, $\mathrm{HK}(\xi_{r},\xi_s) \le \int_r^s |\dot{\xi}_t|_{\mathrm{HK}}dt$. Then squaring the two sides and applying Cauchy-Schwarz inequality, we have
    \begin{align*}  
    \frac{\mathrm{HK}^2(\xi_{t_i},\xi_{t_{i+1}})}{\Delta t_i} \le \frac{1}{\Delta t_i} \left( \int_{t_i}^{t_{i+1}}|\dot{\xi}_t|_{\mathrm{HK}}\ dt\right)^2 \le \int_{t_i}^{t_{i+1}} |\dot{\xi}_t|_{\mathrm{HK}}^2 \ dt.
    \end{align*}
    Summing over $i$ gives the bound (\ref{eqn:partition_bound}). 

    By Theorem 1.1.2 of \cite{ambrosio2005gradient}, the metric derivative $|\dot{\xi}_t|_{\mathrm{HK}}$ exists for a.e. $t\in(0,1)$ and 
    \begin{align*}
        \lim_{h\rightarrow 0^+} \frac{\mathrm{HK}(\xi_{t+h},\xi_{t})}{h} = |\dot{\xi}_t|_{\mathrm{HK}},
    \end{align*}
    so it follows that 
    \begin{align}\label{eqn:DCT_inequality}
        \frac{\mathrm{HK}^2(\xi_{t+h},\xi_{t})}{h^2} = |\dot{\xi}_t|_{\mathrm{HK}}^2 + o(1).
    \end{align}
    Then by the dominated convergence theorem, using the bound \reff{eqn:DCT_inequality} to dominate, we have on a uniform partition $\Pi$ that
    \begin{align*}
        \lim_{\Delta t \rightarrow 0} \sum_{i=0}^{N-1} \frac{\mathrm{HK}^2(\xi_{t_i},\xi_{t_{i+1}})}{\Delta t} = \lim_{\Delta t \rightarrow 0} \sum_{i=0}^{N-1} \Big(|\dot{\xi}_{t_i}|_{\mathrm{HK}}^2 \Delta t + o(\Delta t)\Big) = \int_0^1 |\dot{\xi}_t|_{\mathrm{HK}}^2 dt.
    \end{align*}
    The same argument works for any refining nonuniform sequence with $|\Pi|\rightarrow 0$.

    When $\xi_t$ is a constant-speed HK-geodesic (see \cite{liero2016optimal}, Theorem 12), then $\mathrm{HK}(\xi_{t_i},\xi_{t_{i+1}}) = (t_{i+1}-t_i)\mathrm{HK}(\xi_0,\xi_1)$, so every term in the discrete action $A_{\Pi}(\xi_t)$ equals $\Delta t_i \mathrm{HK}^2(\xi_0,\xi_1)$, and the sum is $\mathrm{HK}^2(\xi_0,\xi_1)$.
\end{proof}

\begin{remark}
    Lemma \ref{lemma:HK_quadrature} indicates that for a given curve $\xi_t$ in $(\mathcal{M}_+(Y),\mathrm{HK})$, the secondary cost 
    \begin{align}
        \inf_{(\bv_t,h_t)} \left\{ \int_Y \left(\alpha\|\bv_t\|^2 + \beta |h_t|^2\right)\xi_t(d\by): \partial_t\xi_t + \alpha\nabla\cdot(\xi_t\bv_t) = \beta\xi_th_t, \ \bv_t\cdot\mathbf{n}|_{\partial Y} = 0  \right\},
    \end{align}
    can be approximated by the discrete action $A_{\Pi}(\xi_t)$ in (\ref{eqn:discrete_action}), with an error $o(1)$.
\end{remark}

According to lemma \ref{lemma:HK_quadrature}, we introduce the approximate Kantorovich UnSyncOT problem.
\begin{definition}[Approximate Kantorovich UnSyncOT]
    Given a partition 
    \[
    \Pi=\{ 0=t_0<\cdots < t_N = 1 \},
    \]
    we define the approximate Kantorovich unbalanced synchronized optimal transport problem as
\begin{align}\label{eqn:KantorovichSynOT_approximate}
    \mathrm{USOT}_{\mathrm{K},\Delta t}(\bar{\rho}_0,\bar{\rho}_1) = \inf_{(\rho,\mathbf{u},g)\in\mathcal{C}_{\mathrm{USOT}}^{\mathrm{K},\Delta t}} c_1 \int_0^1 \int_{X} \Big[ \alpha\|\mathbf{u}\|^2 + \beta |g|^2 \Big] \rho d\bx dt   + c_2 \sum_{i=0}^{N-1} \frac{1}{\Delta t_i}\mathrm{HK}^2(\xi_{t_i},\xi_{t_{i+1}}),
    \end{align}
    in which    \begin{align}\label{eqn:KantorovichSynOT_approximate_constraints}
      \mathcal{C}^{\mathrm{K},\Delta t}_{\mathrm{USOT}}:= \left\{ (\rho,\mathbf{u},g) \middle| 
      \begin{array}{l}
      \partial_t\rho_t + \alpha\nabla\cdot(\rho_t\mathbf{u}_t) = \beta\rho_t g_t \ \mathrm{in}\ \mathcal{D}'(X),\
      \rho_{0,1} = \bar{\rho}_{0,1}, \bu_t\cdot\mathbf{n}|_{\partial X} = 0,\\
      \xi_{t_i}(\mathbf{y}) = (\TK\rho_{t_i})(\by)= \int_{X} \pi(\bx,\by)\rho_{t_i}(\bx) d\bx.
      \end{array}
      \right\}
    \end{align}
\end{definition}

\begin{remark}
    In most cases, $\mathrm{HK}(\xi_{t_i},\xi_{t_{i+1}})$ is computed on flat space $Y$. In a special case, when we treat the Monge UnSyncOT as a degenerated Kantorovich UnSyncOT, $\mathrm{HK}(\xi_{t_i},\xi_{t_{i+1}})$ needs to be solved on $Y = \mathbf{T}(X)$, an embedded Riemannian manifold. See Section \ref{expr} for the numerical experiments.
\end{remark}

\section{Pure Transport Case ($\beta=0$): Balanced Synchronized Optimal Transport}\label{section:Extreme_case_I}

When $\beta = 0$ (and set $\alpha = 1$) and $\bar{\rho}_0,\bar{\rho}_1$ are of equal mass, the UnSyncOT (\ref{eqn:SynOT_dSpaces}) degenerates to (balanced) synchronized optimal transport (SyncOT), which was proposed in \cite{cang2025synchronized}. In a two-space case when $X$ is an open subset of $\mathbb{R}^m$, and $Y (\subset \mathbb{R}^n)$ is open (for Kantorovich case) or relatively open (for Monge case) with respect to $\bT(\mathbb{R}^m)$, SyncOT aims to minimize the following cross-space action
\begin{align}\label{eqn:SynOT_2Spaces}
\mathrm{SOT}(\bar{\rho}_0,\bar{\rho}_1) := \inf_{(\rho,\mathbf{u};\xi,\bv)\in\mathcal{C}_{\mathrm{SOT}}} \ \int_0^1 \bigg(  c_1 \int_{X}  \rho_t\|\mathbf{u}_t\|^2 d\mathrm{vol}_X   
 + c_2 \int_{Y}   \xi_t \|\mathbf{v}_t\|^2 d\mathrm{vol}_Y \bigg) dt,
\end{align}
with feasible set defined as
\begin{align}\label{eqn:SynOT_2Spaces_Constraints}
  \mathcal{C}_{\mathrm{SOT}}(\bar{\rho}_0,\bar{\rho}_1):= \left\{ (\rho,\mathbf{u};\xi,\bv) \middle| 
  \begin{array}{l}
  \partial_t\rho + \nabla\cdot(\rho\mathbf{u}) = 0, \ 
  \rho_{0,1} = \bar{\rho}_{0,1}, \langle \bu, \mathbf{n}|_{\partial X} \rangle_{g_X}= 0,\\
  \partial_t\xi + \nabla\cdot(\xi \mathbf{v}) = 0, \ \xi_t = \mathcal{T}\rho_t,
  \ \langle \bv, \mathbf{n}|_{\partial Y} \rangle_{g_Y} = 0.
  \end{array}
  \right\}
\end{align}
Here we adopt a generic Riemannian manifold notation for both spaces, just as what we do in \reff{eqn:SynOT_dSpaces}, even though the primary space $X$ is flat. In what follows, we will use Riemannian manifold notation (respectively Euclidean notation) for the secondary space in the Monge formulation (respectively Kantorovich formulation).

\subsection{Monge Synchronized Optimal Transport}\label{subsection:Monge_SyncOT}

Let the map $\mathcal{T}$ be of the Monge form $\mathcal{T} = \mathcal{T}_{\mathrm{M}} = \mathbf{T}_{\sharp}$ for a $C^1$-embedding $\mathbf{T}:X\subseteq \mathbb{R}^m\rightarrow Y = \bT(X)$ such that $Y$ is a $C^1$ embedded $m$-dimensional submanifold of $\mathbb{R}^n$. Then the SyncOT (\ref{eqn:SynOT_2Spaces}) becomes the Monge SyncOT problem
\begin{align}\label{eqn:Monge_SynOT_2Spaces}
{\mathrm{SOT}}_{\mathrm{M}}(\bar{\rho}_0,\bar{\rho}_1) := \inf_{(\rho,\mathbf{u};\xi,\bv)\in\mathcal{C}_{\mathrm{SOT}}^{\mathrm{M}}} \ \int_0^1 \bigg(  c_1 \int_{X} \|\mathbf{u}_t\|^2 \rho_t d\bx   
 + c_2 \int_{Y}   \|\mathbf{v}_t\|^2 \xi_t d\mathrm{vol}_Y \bigg) dt,
\end{align}
for $\rho_t(\bx)\in\mathcal{P}_2(X), \bu_t(\bx)\in T_\bx X, \xi_t(\by)\in\mathcal{P}_2(Y), \bv_t(\by)\in T_{\by}Y$
with feasible set defined as
\begin{align}\label{eqn:Monge_SynOT_2Spaces_Constraints}
  \mathcal{C}_{\mathrm{SOT}}^{\mathrm{M}}(\bar{\rho}_0,\bar{\rho}_1):= \left\{ (\rho,\mathbf{u};\xi,\bv) \middle| 
  \begin{array}{l}
  \partial_t\rho + \nabla\cdot(\rho\mathbf{u}) = 0, \ 
  \rho_{0,1} = \bar{\rho}_{0,1}, \  \bu_t \cdot\mathbf{n}|_{\partial X} = 0,\\
  \partial_t\xi + \mathrm{div}_{Y}(\xi \mathbf{v}) = 0, \ \xi_t d\mathrm{vol}_Y = \mathbf{T}_{\sharp}(\rho_t d\bx),
  \ \langle \bv_t, \mathbf{n}|_{\partial Y} \rangle_g = 0.
  \end{array}
  \right\}
\end{align}

In \cite{cang2025synchronized}, we proved that the Monge SyncOT can be recast into a single-space dynamical OT with a modified kinetic energy. The conclusion of this theorem can be viewed as a direct consequence of lemma \ref{lemma:W2_Anorm}.
\begin{theorem}[\cite{cang2025synchronized}]\label{theorem:MongeSync}
    When taking $\mathcal{T} = \mathbf{T}_{\sharp}$ in the SyncOT \reff{eqn:Monge_SynOT_2Spaces}-\reff{eqn:Monge_SynOT_2Spaces_Constraints}, it can be reformulated as the following single-space dynamical OT:
    \begin{align}\label{eqn:Monge_SyncOT_SingleSpace}
    \mathrm{BB}^2_{X,\mathbf{A}}(\bar{\rho}_0,\bar{\rho}_1) = \min_{(\rho,\mathbf{u})\in\mathcal{C}^X_{\mathrm{BB}}(\bar{\rho}_0,\bar{\rho}_1)} \int_0^1\int_{X} \rho_t(\mathbf{x}) \|\mathbf{u}_t(\mathbf{x})\|_{\mathbf{A}}^2  d\bx dt,
\end{align}
where $\mathcal{C}^X_{\mathrm{BB}}$ is defined in \reff{eqn:feasible_BB_flat}, $\mathbf{A} = c_1 \mathbf{I} + c_2(\nabla\mathbf{T})^{\mathrm{T}}(\nabla\mathbf{T})$, and $\|\mathbf{u}\|_{\mathbf{A}}^2:=\mathbf{u}^{\mathrm{T}}\mathbf{A}\mathbf{u}$.
\end{theorem}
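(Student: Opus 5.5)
The argument parallels the proof of the Monge UnSyncOT theorem above with $\beta=0$, $\alpha=1$, reducing the two-space action to a single-space action with metric $\mathbf{A}$.

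\textbf{Step 1 (secondary velocity).} Fix a feasible $(\rho,\mathbf{u};\xi,\bv)\in\mathcal{C}^{\mathrm{M}}_{\mathrm{SOT}}$. Test the secondary continuity equation against $\phi\in C_c^\infty(Y)$ and use $\xi_t d\mathrm{vol}_Y=\mathbf{T}_\sharp(\rho_t d\bx)$. The left side becomes $\frac{d}{dt}\int_X\rho_t\,\phi\circ\mathbf{T}\,d\bx$. The primary equation and the chain rule $\nabla(\phi\circ\mathbf{T})=(\nabla\mathbf{T})^{\mathrm T}(\nabla_Y\phi)\circ\mathbf{T}$ turn this into $\int_X\langle(\nabla_Y\phi)\circ\mathbf{T},\nabla\mathbf{T}\,\mathbf{u}_t\rangle\rho_t\,d\bx$. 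The right side is $\int_X\langle(\nabla_Y\phi)\circ\mathbf{T},\bv_t\circ\mathbf{T}\rangle_g\rho_t\,d\bx$. Comparing the two, $\bv_t\circ\mathbf{T}$ and $\nabla\mathbf{T}\,\mathbf{u}_t$ induce the same flux.

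Conversely, $\bv_t(\mathbf{T}(\bx)):=\nabla\mathbf{T}(\bx)\mathbf{u}_t(\bx)$ is an admissible secondary velocity. This is exactly the forward map (\ref{eqn:forward_map_XZ}) of Lemma~\ref{lemma:W2_Anorm} with $\mathbf{S}=\mathbf{T}$. The boundary condition transfers because $\mathbf{T}$ is an embedding mapping $\partial X$ to $\partial Y$.

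\textbf{Step 2 (action identity).} For this choice, by (\ref{eqn: two_norms}) and the pushforward identity as in (\ref{eqn:Yaction_equal_Xaction_Anorm}),
\[
\int_Y\|\bv_t\|_g^2\,\xi_t\,d\mathrm{vol}_Y=\int_X\|\mathbf{u}_t\|^2_{(\nabla\mathbf{T})^{\mathrm T}\nabla\mathbf{T}}\,\rho_t\,d\bx .
\]
Adding $c_1\int_X\|\mathbf{u}_t\|^2\rho_t\,d\bx$ gives $\int_X\|\mathbf{u}_t\|_{\mathbf{A}}^2\rho_t\,d\bx$ with $\mathbf{A}=c_1\mathbf{I}+c_2(\nabla\mathbf{T})^{\mathrm T}\nabla\mathbf{T}$. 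This yields $\mathrm{SOT}_{\mathrm M}\le\mathrm{BB}^2_{X,\mathbf{A}}$.

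\textbf{Step 3 (reverse inequality).} The main obstacle is that the continuity equation on $Y$ fixes $\bv_t$ only up to fields that are divergence-free with respect to $\xi_t$. So Step 1 identifies only the flux, not $\bv_t$ pointwise; the paper's earlier proof simply asserted equality. I would handle this by noting that any feasible $\bv_t$ has energy at least that of the minimal-norm (gradient) representative of $\partial_t\xi_t$. Therefore the secondary cost is at least $\|\partial_t\xi_t\|^2_{-1,\xi_t}$ on $(Y,g)$.

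Next, pull the problem back to $X$. Given any feasible pair, take $\tilde{\mathbf{u}}_t=(\nabla\mathbf{T})^\dagger\bv_t\circ\mathbf{T}$ via the backward map (\ref{eqn:backward_map_XZ}); it solves the same continuity equation for $\rho_t$. The quadratic form $c_1\|\mathbf{u}\|^2+c_2\|\mathbf{w}\|_g^2$, minimized jointly over pairs $(\mathbf{u},\mathbf{w})$ sharing the flux constraint, is bounded below by the minimum over $\mathbf{u}$ with $\mathbf{w}=\nabla\mathbf{T}\mathbf{u}$. The reason is that $\nabla\mathbf{T}$ is an isometry from $(T_\bx X,\mathbf{A}_2)$ onto $T_\by Y$, where $\mathbf{A}_2=(\nabla\mathbf{T})^{\mathrm T}\nabla\mathbf{T}$, so the two spaces share one tangent geometry.

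Concretely, I would pass to the primary variable $\mathbf{u}^\ast=\mathbf{A}^{-1}(c_1\mathbf{u}+c_2\mathbf{A}_2\tilde{\mathbf{u}})$. Convexity of $\|\cdot\|^2$ then gives $c_1\|\mathbf{u}\|^2+c_2\|\tilde{\mathbf{u}}\|^2_{\mathbf{A}_2}\ge\|\mathbf{u}^\ast\|_{\mathbf{A}}^2$, and $\rho_t\mathbf{u}^\ast$ still has the correct divergence because both $\rho_t\mathbf{u}$ and $\rho_t\tilde{\mathbf{u}}$ do. Taking infima gives $\mathrm{SOT}_{\mathrm M}\ge\mathrm{BB}^2_{X,\mathbf{A}}$, which completes the equality.
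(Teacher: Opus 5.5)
Your Steps 1--2 are correct and are exactly the paper's proof. The paper applies the forward map (\ref{eqn:forward_map_XZ}) with $\mathbf{S}=\mathbf{T}$, uses the action identity (\ref{eqn:Yaction_equal_Xaction_Anorm}), adds the $c_1$-term and takes the infimum. This rigorously gives $\mathrm{SOT}_{\mathrm{M}}\le\mathrm{BB}^2_{X,\mathbf{A}}$. You are also right that the constraints (\ref{eqn:Monge_SynOT_2Spaces_Constraints}) only force $\rho_t(\tilde{\mathbf{u}}_t-\mathbf{u}_t)$ to be divergence-free, not zero. Step 3, however, fails in two places. First, the claim that the joint minimum over pairs $(\mathbf{u},\mathbf{w})$ sharing only the flux constraint is bounded \emph{below} by the minimum over coupled pairs $\mathbf{w}=\nabla\mathbf{T}\,\mathbf{u}$ is backwards. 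The coupled pairs form a subset, so the joint minimum is $\le$. The isometry of $\nabla\mathbf{T}$ only gives $\|\mathbf{w}\|_g=\|\tilde{\mathbf{u}}\|_{\mathbf{A}_2}$; it does not let you replace $\tilde{\mathbf{u}}$ by $\mathbf{u}$. Second, $\rho_t\mathbf{u}^\ast$ does not solve the continuity equation. Write $\rho_t\mathbf{u}^\ast=\rho_t\mathbf{u}_t+c_2\mathbf{A}^{-1}\mathbf{A}_2\,\rho_t(\tilde{\mathbf{u}}_t-\mathbf{u}_t)$. Multiplying the divergence-free field $\rho_t(\tilde{\mathbf{u}}_t-\mathbf{u}_t)$ by the non-scalar, $\bx$-dependent matrix $c_2\mathbf{A}^{-1}\mathbf{A}_2$ destroys divergence-freeness. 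Averaging respects the linear constraint only with constant scalar weights. With those weights, $\|c_1\mathbf{u}+c_2\tilde{\mathbf{u}}\|^2_{\mathbf{A}}\le c_1\|\mathbf{u}\|^2+c_2\|\tilde{\mathbf{u}}\|^2_{\mathbf{A}_2}$ is false in general.

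This cannot be patched, because for the problem as literally posed the reverse inequality is false when $m\ge 2$. Via the backward map, $\mathrm{SOT}_{\mathrm{M}}=\inf_\rho\int_0^1\big(c_1\|\dot\rho_t\|^2_{\mathbf{I}}+c_2\|\dot\rho_t\|^2_{\mathbf{A}_2}\big)dt$, where $\|\sigma\|^2_{\mathbf{B}}:=\inf\{\int_X\|\mathbf{u}\|^2_{\mathbf{B}}\rho\,d\bx:\,-\nabla\cdot(\rho\mathbf{u})=\sigma\}$. This is a sum of two separate infima, whereas $\mathrm{BB}^2_{X,\mathbf{A}}$ is the infimum of the sum. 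At the $\mathbf{A}$-optimal curve the two coincide only if $\mathbf{A}^{-1}\nabla\Phi_t$ is simultaneously the $\mathbf{I}$-minimal velocity (a gradient) and the $\mathbf{A}_2$-minimal one. For a concrete counterexample, take $X=\mathbb{R}^2$ and $\mathbf{T}\bx=\mathrm{diag}(1,2)\bx$, so $\mathbf{A}=\mathrm{diag}(1,1+3c_2)$. Let $\bar\rho_0=N(0,\mathbf{A}^{-1})$ and $\bar\rho_1=N(0,\Sigma_1)$ with $\Sigma_1$ non-diagonal. The $\mathbf{A}$-optimal velocity is $\mathbf{A}^{-1/2}M_t\mathbf{A}^{1/2}\bx$ with $M_t$ symmetric and non-diagonal. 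This field is a gradient only if $M_t$ commutes with $\mathbf{A}$, which it does not. Hence $\mathrm{SOT}_{\mathrm{M}}<\mathrm{BB}^2_{X,\mathbf{A}}$ for every $c_2\in(0,1)$. The equality does hold when $m=1$, where the flux is unique. It also holds once the synchronization is read as additionally imposing $\mathbf{v}_t\circ\mathbf{T}=\nabla\mathbf{T}\,\mathbf{u}_t$, which is what the paper's appeal to the forward map tacitly does. Under that reading your Steps 1--2 are already the complete proof, and Step 3 should be dropped rather than repaired.
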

\begin{proof}
    The second half of the action in \reff{eqn:Monge_SynOT_2Spaces} is the action in the secondary space $Y$, which is identical to the action with $G$-norm in the primary space $X$ with $G = (\nabla\mathbf{T})^{\mathrm{T}}(\nabla\mathbf{T})$, as shown in the proof of Lemma \ref{lemma:W2_Anorm}, particularly the equation (\ref{eqn:Yaction_equal_Xaction_Anorm}). Then this half together with the first half of the action lead to a (total) action with $\mathbf{A}$-norm in the primary space $X$. Taking infimum over the feasible set $\mathcal{C}_{\mathrm{BB}}^X$ for the total action yields \reff{eqn:Monge_SyncOT_SingleSpace}. 
\end{proof}

Here we give the dual form of the Monge SyncOT which is derived in \cite{cang2025synchronized}.
\begin{corollary}[\cite{cang2025synchronized}]\label{corollary:Monge_SyncOT_dual}
    The minimizer $(\rho_t(\bx),\bu_t(\bx))$ of the single-space Monge SyncOT problem \reff{eqn:Monge_SyncOT_SingleSpace} satisfies
    \begin{align}\label{eqn:Monge_SyncOT_dual}
        \bu_t(\bx) = \bA^{-1}\nabla\Phi_t(\bx),
    \end{align}
    in which the dual variable $\Phi_t(\bx)$ is determined by
    \begin{align}\label{eqn:Monge_SyncOT_dualform}
        \begin{cases}
            \partial_t\rho_t + \nabla\cdot(\rho_t\bA^{-1}\nabla\Phi_t) = 0, \quad \rho_{0,1} = \bar{\rho}_{0,1};\\
            \partial_t\Phi_t + \frac{1}{2}\|\nabla\Phi_t\|_{\bA^{-1}}^2 \le 0.          
        \end{cases}
    \end{align}
    If $\rho_t > 0$, then
    \begin{align}
        \partial_t\Phi_t + \frac{1}{2}\|\nabla\Phi_t\|_{\bA^{-1}}^2 = 0. 
    \end{align}
\end{corollary}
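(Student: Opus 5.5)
The plan is to mirror the proof of Corollary \ref{corollary:Monge_UnSyncOT_dual} with $\beta=0$, $\alpha=1$; equivalently, to write down the first-order optimality system of the convex problem (\ref{eqn:Monge_SyncOT_SingleSpace}) after Benamou--Brenier linearization. First, introduce the momentum $\bfm_t=\rho_t\bu_t$ and the lower semicontinuous convex integrand
\[
J(\rho,\bfm)=\rho^{-1}\|\bfm\|_\bA^2 \text{ if } \rho>0,\qquad J(0,\mathbf{0})=0,\qquad J=+\infty \text{ otherwise}.
\]
This turns (\ref{eqn:Monge_SyncOT_SingleSpace}) into a convex minimization of $\int_0^1\int_X J\,d\bx\,dt$ subject to the linear constraints $\partial_t\rho+\nabla\cdot\bfm=0$, $\rho_{0,1}=\bar\rho_{0,1}$, and $\bfm\cdot\bn|_{\partial X}=0$. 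Since $\bA=c_1\mathbf I+c_2(\nabla\bT)^\rmT\nabla\bT$ is symmetric positive definite whenever $c_1>0$, or whenever $c_2>0$ and $\bT$ is an immersion, $J$ is a genuine perspective function and $\bA^{-1}$ exists.

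Next, form the Lagrangian with multiplier $\Phi_t(\bx)$:
\[
\mathcal L=\int_0^1\int_X \tfrac12\rho^{-1}\|\bfm\|_\bA^2+\Phi(\partial_t\rho+\nabla\cdot\bfm)\,d\bx\,dt .
\]
The factor $\tfrac12$ is only a normalization of $\Phi$. Integrate by parts in time and space. The boundary terms vanish because the endpoint densities are prescribed and $\bfm\cdot\bn=0$. This leaves $-\int\rho\,\partial_t\Phi-\int \bfm\cdot\nabla\Phi$ plus a fixed boundary contribution.

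Stationarity in $\bfm$ gives $\rho^{-1}\bA\bfm=\nabla\Phi$, that is, $\bu=\bA^{-1}\nabla\Phi$, which is (\ref{eqn:Monge_SyncOT_dual}). Substituting this into the continuity equation yields the first line of (\ref{eqn:Monge_SyncOT_dualform}).

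For the $\rho$-variation there is a sign constraint $\rho\ge0$, so the KKT condition is an inequality: $-\tfrac12\rho^{-2}\|\bfm\|_\bA^2-\partial_t\Phi\ge0$. Using $\rho^{-1}\bfm=\bA^{-1}\nabla\Phi$ gives $\rho^{-2}\|\bfm\|^2_\bA=\|\bA^{-1}\nabla\Phi\|_\bA^2=\|\nabla\Phi\|^2_{\bA^{-1}}$, hence $\partial_t\Phi+\tfrac12\|\nabla\Phi\|^2_{\bA^{-1}}\le0$. By complementary slackness, equality holds wherever $\rho_t>0$.

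An alternative route is to obtain everything at once from Corollary \ref{corollary:Monge_UnSyncOT_dual} by setting $\beta=0$ and $\alpha=1$, in which case the reaction variable drops out. I would remark on this, but keep the direct derivation.

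The main obstacle is rigor rather than computation: justifying the interchange of inf and sup (no duality gap) and the regularity of $\Phi$. I would handle this as in Benamou--Brenier and \cite{cang2025synchronized}. Write the dual as a maximization over $\Phi$ of $\int_X\Phi_1\bar\rho_1-\Phi_0\bar\rho_0$ subject to the Hamilton--Jacobi inequality, which is the Legendre dual of $J$ with respect to $(\rho,\bfm)$. Then invoke Fenchel--Rockafellar duality, which applies since $J$ is convex, proper, and lower semicontinuous, and a feasible point exists (for example, the $W_{2,\bA}$ geodesic from Lemma \ref{lemma:W2_Anorm}). At the formal level of the paper, the KKT computation above suffices.
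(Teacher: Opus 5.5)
Your proposal is correct and follows essentially the paper's route. The paper cites this corollary from \cite{cang2025synchronized} without reproducing a proof, but your momentum substitution, Lagrangian with multiplier $\Phi$, and KKT conditions are exactly the $\beta=0$, $\alpha=1$ specialization of the derivation the paper gives for Corollary~\ref{corollary:Monge_UnSyncOT_dual}, including the $\tfrac12$ normalization and the complementary-slackness equality where $\rho_t>0$.

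One small correction to your rigor remark: Fenchel--Rockafellar needs a constraint qualification, not merely a feasible primal point. Here it is naturally checked on the dual side: $\Phi=-t$ puts $(\partial_t\Phi,\nabla\Phi)=(-1,\mathbf{0})$ in the interior of the Hamilton--Jacobi constraint set. That yields no duality gap and primal attainment, but not a regular dual maximizer. The paper works only at the formal KKT level, so this does not affect your argument.
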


The dual form in Corollary \ref{corollary:Monge_SyncOT_dual} directly implies the reformulation of the single-space Monge SyncOT in terms of the dissipation distance.
\begin{corollary}\label{corollary:Monge_SyncOT_Onsager}
    In the framework of dissipation distance (\ref{eqn:dissipation_potential}), the single-space Monge (balanced) synchronized optimal transport (\ref{eqn:Monge_SyncOT_SingleSpace}) can be recast into the form (\ref{eqn:dissipation_distance_K}) with the Onsager operator $\mathbb{K}$ given as  $\mathbb{K}(\rho_t)g_t = -\nabla\cdot(\rho_t\bA^{-1}\nabla g_t)$. This is one of the cases in List \ref{eqn:K_operators}.
\end{corollary}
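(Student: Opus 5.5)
The plan is to read off the Onsager operator from the single-space reformulation of Theorem \ref{theorem:MongeSync} together with the dual structure in Corollary \ref{corollary:Monge_SyncOT_dual}, mirroring what was done for the unbalanced case in Corollary \ref{corollary:Monge_UnSyncOT_Onsager} with $\beta=0$, $\alpha=1$. Concretely, I will show that for each fixed $\rho=\rho_t$ the inner minimization over velocities in (\ref{eqn:Monge_SyncOT_SingleSpace}) defines a quadratic dissipation potential $\Psi_\rho(\dot\rho)=\langle \mathbb{G}(\rho)\dot\rho,\dot\rho\rangle$. I will then check that its Legendre dual is $\Psi^*_\rho(g)=\langle g,\mathbb{K}(\rho)g\rangle$ with $\mathbb{K}(\rho)g=-\nabla\cdot(\rho\bA^{-1}\nabla g)$.

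\textbf{Step 1: the inner problem.} Fix $t$ and $\sigma=\partial_t\rho_t$. I will consider
\begin{align*}
\Psi_\rho(\sigma)=\inf_{\bu}\Big\{\int_X \rho\|\bu\|_{\bA}^2 d\bx \ :\ -\nabla\cdot(\rho\bu)=\sigma,\ \bu\cdot\bn|_{\partial X}=0\Big\}.
\end{align*}
By Lemma \ref{lemma:W2_Anorm} and Theorem \ref{theorem:MongeSync}, minimizing $\int_0^1\Psi_{\rho_t}(\partial_t\rho_t)dt$ over curves with the given endpoints reproduces $\mathrm{BB}^2_{X,\bA}$. This puts the problem in the form (\ref{eqn:dissipation_potential}).

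\textbf{Step 2: characterize the minimizer.} I will use the same Lagrange-multiplier argument as in the proof of Corollary \ref{corollary:Monge_UnSyncOT_dual}, restricted to the static inner problem. Stationarity gives $\rho\bA\bu=\rho\nabla g$ for a multiplier $g$, so $\bu=\bA^{-1}\nabla g$, consistent with (\ref{eqn:Monge_SyncOT_dual}). Alternatively, one can argue by orthogonality: any competitor is $\bA^{-1}\nabla g+\bw$ with $\nabla\cdot(\rho\bw)=0$ and $\bw\cdot\bn=0$. The cross term $\int\rho\langle\nabla g,\bw\rangle\,d\bx$ then vanishes after integration by parts, which gives uniqueness. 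The multiplier $g$ solves the weighted elliptic Neumann problem
\begin{align*}
-\nabla\cdot(\rho\bA^{-1}\nabla g)=\sigma,\quad \bA^{-1}\nabla g\cdot\bn=0 .
\end{align*}
This is exactly $\sigma=\mathbb{K}(\rho)g$.

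\textbf{Step 3: compute the potential.} Substituting the minimizer, $\Psi_\rho(\sigma)=\int_X\rho\langle\nabla g,\bA^{-1}\nabla g\rangle\,d\bx=\langle g,\mathbb{K}(\rho)g\rangle$, again by integration by parts with the Neumann condition. Since $\sigma=\mathbb{K}(\rho)g$, this equals $\langle \mathbb{K}(\rho)^{-1}\sigma,\sigma\rangle$, so $\mathbb{G}=\mathbb{K}^{-1}$. Integrating in time and taking the infimum recovers (\ref{eqn:dissipation_distance_K}) with the stated $\mathbb{K}$. The optimal $g$ coincides with the dual potential $\Phi_t$ of Corollary \ref{corollary:Monge_SyncOT_dual}.

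\textbf{Main obstacle.} The delicate point is the well-posedness of $\mathbb{K}(\rho)$ as an invertible operator. This needs $\bA^{-1}$ uniformly elliptic, which holds because $\bA\succeq c_1\mathbf{I}$ and $\nabla\bT$ is $C^1$. It also needs $\sigma$ to have zero mean (mass conservation), with $g$ determined only up to constants, and $\rho>0$ where the equation is posed. I would handle this by working formally on $\{\rho>0\}$, modulo constants, exactly as the paper treats the Wasserstein norm (\ref{eqn:WassersteinNorm}).
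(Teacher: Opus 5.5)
Your proposal is correct and follows essentially the paper's route. The paper gives no separate proof: it reads $\mathbb{K}(\rho)g=-\nabla\cdot(\rho\bA^{-1}\nabla g)$ directly off the relation $\bu_t=\bA^{-1}\nabla\Phi_t$ in Corollary \ref{corollary:Monge_SyncOT_dual}, and your pointwise-in-time inner minimization with the orthogonality argument is a more careful version of that same substitution. (One small slip: $\bA\succeq c_1\mathbf{I}$ only bounds $\bA^{-1}$ from above, so uniform ellipticity of $\bA^{-1}$ also needs $\nabla\bT$ to be bounded, which continuity alone does not give on an open $X$.)
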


Additionally, using Theorem \ref{theorem:MongeSync}, we can show that Monge SyncOT generates constant-speed geodesics under some specific conditions. 
\begin{corollary}\label{corollary:geodesics}
    Given the Monge SyncOT problem \reff{eqn:Monge_SynOT_2Spaces}, and assume there exists a $C^1$-embedding $\mathbf{S}: X \rightarrow Z = \mathbf{S}(X) \subseteq \mathbb{R}^n$, such that
    \begin{align}\label{eqn:A_equal_S}
        \mathbf{A} = c_1 \mathbf{I} + c_2(\nabla\mathbf{T})^{\mathrm{T}}(\nabla\mathbf{T}) = (\nabla\mathbf{S})^{\mathrm{T}}(\nabla\mathbf{S}), 
    \end{align}    
    then we have
    \begin{align}\label{eqn:W2A_BBZ}
        W_{2,\mathbf{A}}(\bar{\rho}_0,\bar{\rho}_1) 
        =\mathrm{BB}_{X,\mathbf{A}}(\bar{\rho}_0,\bar{\rho}_1)  = \mathrm{BB}_{Z}(\mathbf{S}_{\sharp}\bar{\rho}_0, \mathbf{S}_{\sharp}\bar{\rho}_1) 
        = W_2({\mathbf{S}}_{\sharp}\bar{\rho}_0, {\mathbf{S}}_{\sharp}\bar{\rho}_1),
    \end{align}
    namely, the map
    \begin{align}
        \mathbf{S}_{\sharp}: (\mathcal{M}_+(X),W_{2,{\mathbf{A}}})\rightarrow (\mathcal{M}_+(Z),W_2)
    \end{align}
    is an isometric bijection.
    Besides, the minimizer $t\rightarrow \rho_t^*$ is a constant-speed geodesic with respect to the $W_{2,\mathbf{A}}$ metric.
\end{corollary}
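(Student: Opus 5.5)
The chain of equalities \reff{eqn:W2A_BBZ} is Lemma \ref{lemma:W2_Anorm} applied to the embedding $\mathbf{S}$. By the assumption \reff{eqn:A_equal_S}, the matrix $\mathbf{A}$ coming from Theorem \ref{theorem:MongeSync} is exactly the pullback metric $(\nabla\mathbf{S})^{\mathrm{T}}(\nabla\mathbf{S})$. The hypotheses of Lemma \ref{lemma:W2_Anorm} then hold verbatim: $X$ is open and convex and $\mathbf{S}$ is a $C^1$-embedding. This gives $W_{2,\mathbf{A}}=\mathrm{BB}_{X,\mathbf{A}}=\mathrm{BB}_Z(\mathbf{S}_\sharp\cdot,\mathbf{S}_\sharp\cdot)=W_2(\mathbf{S}_\sharp\cdot,\mathbf{S}_\sharp\cdot)$. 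By Theorem \ref{theorem:MongeSync}, $\mathrm{SOT}_{\mathrm{M}}(\bar\rho_0,\bar\rho_1)=\mathrm{BB}^2_{X,\mathbf{A}}(\bar\rho_0,\bar\rho_1)$, so the Monge SyncOT value is identified with these quantities.

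For the isometric bijection, I will argue as follows. Since $\mathbf{S}$ is injective onto $Z$, the pushforward $\mathbf{S}_\sharp$ is a bijection between measures on $X$ and on $Z$, with inverse $(\mathbf{S}^{-1})_\sharp$. Mass is preserved, so the equal-mass setting of the balanced problem is respected. The isometry property is the first step in the proof of Lemma \ref{lemma:W2_Anorm}. There, the equality of curve lengths $\|\dot\gamma\|_{\mathbf{A}}=\|\tfrac{d}{dt}\mathbf{S}(\gamma)\|_g$ gives $d_{\mathbf{A}}(\bx,\bx')=d_g(\mathbf{S}\bx,\mathbf{S}\bx')$. Moreover, $(\mathbf{S}\times\mathbf{S})_\sharp$ is a bijection of coupling sets. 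So for every pair of measures we get $W_{2,\mathbf{A}}(\mu,\nu)=W_2(\mathbf{S}_\sharp\mu,\mathbf{S}_\sharp\nu)$, and not only for the prescribed endpoints.

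For the geodesic claim, let $(\rho^*,\bu^*)$ be a minimizer of \reff{eqn:Monge_SyncOT_SingleSpace}. Its image $\eta^*_t=\mathbf{S}_\sharp\rho^*_t$, with velocity $\bw^*=\nabla\mathbf{S}\,\bu^*$, is feasible for $\mathrm{BB}_Z$ by the one-to-one correspondence \reff{eqn:forward_map_XZ}--\reff{eqn:backward_map_XZ}, and it has the same action by \reff{eqn:Yaction_equal_Xaction_Anorm}. Hence $\eta^*$ minimizes $\mathrm{BB}_Z$.

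The step I expect to need the most care is the standard Benamou--Brenier/Otto--Villani fact on the manifold $Z$: a minimizer of the kinetic action is a constant-speed $W_2$-geodesic. I will argue it through the metric derivative. For any $0\le s<t\le 1$,
\[ W_2(\eta^*_s,\eta^*_t)\le\int_s^t\|\bw^*_r\|_{L^2(\eta^*_r)}dr . \]
By Cauchy--Schwarz, the total action is at least the squared length, and that in turn is at least $W_2^2(\eta_0,\eta_1)$. Equality in both must hold at the minimizer. Equality in Cauchy--Schwarz forces $\|\bw^*_r\|_{L^2(\eta^*_r)}$ to be a.e. constant, equal to $W_2(\eta_0,\eta_1)$. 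Equality in the triangle inequalities along $[0,s],[s,t],[t,1]$ then forces $W_2(\eta^*_s,\eta^*_t)=|t-s|W_2(\eta_0,\eta_1)$. Pulling back through the isometry $\mathbf{S}_\sharp$ gives $W_{2,\mathbf{A}}(\rho^*_s,\rho^*_t)=|t-s|\,W_{2,\mathbf{A}}(\bar\rho_0,\bar\rho_1)$, which is the claim.
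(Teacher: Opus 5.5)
Your proposal is correct and follows the paper's route. The identities and the isometric bijection come from Lemma \ref{lemma:W2_Anorm} under \reff{eqn:A_equal_S}. The geodesic claim comes from pushing the minimizer to $Z$, using that $\mathbf{S}_\sharp\rho^*_t$ is a constant-speed $W_2$-geodesic there, and pulling back through the isometry $\mathbf{S}_\sharp$.

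The only difference is that you verify two facts the paper simply asserts. First, that $\mathbf{S}_\sharp\rho^*$ minimizes $\mathrm{BB}_Z$, via the correspondence \reff{eqn:forward_map_XZ}--\reff{eqn:backward_map_XZ}. Second, that a Benamou--Brenier minimizer has constant speed, via equality in Cauchy--Schwarz and in the triangle inequality. This completes the paper's argument rather than departing from it.
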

\begin{proof}
    The proof of the identities in \reff{eqn:W2A_BBZ} is a direct consequence of Lemma \ref{lemma:W2_Anorm} given the condition \reff{eqn:A_equal_S}. Additionally, 
    since $t\rightarrow \eta_t^* = \mathbf{S}_{\sharp}\rho_t^*$ is a constant-speed geodesic in $(\mathcal{P}(Z),W_2)$, and $\mathbf{S}_{\sharp}$ introduces an isometric bijection between $(\mathcal{P}(X),W_{2,\mathbf{A}})$ and $(\mathcal{P}(Z),W_2)$, therefore $t\rightarrow \rho_t^*$ is also a constant-speed geodesic in $(\mathcal{P}(X),W_{2,\mathbf{A}})$.
\end{proof}

Corollary \ref{corollary:geodesics} indicates that the jointly optimal trajectory between spaces $X$ and $Y$ can be recovered by the optimal trajectory in space $Z$. Inspired by this fact, we can define the following interpolation between spaces.
\begin{definition}[Wasserstein Barycenter between Spaces]
    Given two spaces $X$ and $Y$ linked by a $C^1$ embedding $\mathbf{T}:X\rightarrow Y=\mathbf{T}(X)$, and a weight vector $\mathbf{c}=(c_1,c_2)^{\mathrm{T}}\in\Sigma_2$, a space $Z$ is called the Wasserstein barycenter between $X$ and $Y$, denoted by $Z = \mathrm{WB}(X,Y;\mathbf{c})$, provided that the SyncOT \reff{eqn:Monge_SyncOT_SingleSpace} between $X$ and $Y$ is equivalent to the classic dynamical OT problem in $Z$, namely, there exists a $C^1$-embedding $\mathbf{S}: X \rightarrow Z = \mathbf{S}(X)$ such that equation (\ref{eqn:A_equal_S}) holds.
\end{definition}

\begin{example}
    Consider $X = \mathbb{R}^2$ and $\mathbf{T}: (x,y)\rightarrow (x,y, F(x,y))$ with $F\in C_{c}^{\infty}(\mathbb{R}^2)$. Then 
    \begin{align*}
        \mathbf{A} = c_1 \mathbf{I} + c_2(\nabla\mathbf{T})^{\mathrm{T}}(\nabla\mathbf{T}) 
        = \mathbf{I} + c_2(\nabla F)(\nabla F)^\mathrm{T}.
    \end{align*}
    We can take $\mathbf{S}: (x,y)\rightarrow (x,y, \sqrt{c_2}F(x,y))$, then one can verify that $\mathbf{A} = (\nabla\mathbf{S})^{\mathrm{T}}(\nabla\mathbf{S})$. Therefore, Corollary \ref{corollary:geodesics} applies and optimal trajectory $\rho_t^*(\bx)$ is a constant-speed geodesic with respect to $W_{2,\mathbf{A}}$ metric.

    Furthermore, the space $Z:=\big((x,y,z)| z = \sqrt{c}_2F(x,y)\big) \subseteq \mathbb{R}^3$ is the Wasserstein Barycenter between $X = \mathbb{R}^2$ and $Y:=\big((x,y,z)| z = F(x,y)\big) \subseteq \mathbb{R}^3$. In the extreme cases when $\mathbf{c} = (1,0)^{\mathrm{T}}$, $Z = X$; when $\mathbf{c} = (0,1)^{\mathrm{T}}$, $Z = Y$.
\end{example}

\begin{remark}
    Given Riemaniann metric $\mathbf{A}$, finding a diffeomorphism $\mathbf{S}$ such that $\mathbf{A} = (\nabla\mathbf{S})^{\mathrm{T}}(\nabla\mathbf{S})$ is indeed the isometric immersion problem. A short review of this problem can be seen in \cite{han2024isometric}.
\end{remark}

\subsection{Kantorovich Synchronized Optimal Transport}
In the setting of Kantorovich SyncOT, we assume $X\subseteq \bbR^m$ and $Y\subseteq \bbR^n $ to be open and convex.
When the map $\mathcal{T}$ is of the Kantorovich form $\mathcal{T} = \mathcal{T}_{\mathrm{K}}$ as defined in \reff{eqn:TK}, the SyncOT (\ref{eqn:SynOT_2Spaces}) becomes the Kantorovich SyncOT problem
\begin{align}\label{eqn:Kantorovich_SynOT_2Spaces}
\mathrm{SOT}_{\mathrm{K}}(\bar{\rho}_0,\bar{\rho}_1) := \inf_{(\rho,\mathbf{u};\xi,\bv)\in\mathcal{C}^{\mathrm{K}}_{\mathrm{SOT}}} \ \int_0^1 \bigg(  c_1 \int_{X}  \rho_t \|\mathbf{u}_t\|^2 d\bx   
 + c_2 \int_{Y}   \xi_t \|\mathbf{v}_t\|^2 d\by   \bigg) dt,
\end{align}
with feasible set defined as
\begin{align}\label{eqn:Kantorovich_SynOT_2Spaces_Constraints}
  \mathcal{C}^{\mathrm{K}}_{\mathrm{SOT}}= \left\{ (\rho,\mathbf{u};\xi,\bv) \middle| 
  \begin{array}{l}
  \partial_t\rho + \nabla\cdot(\rho\mathbf{u}) = 0 \mathrm{\ in\ } \mathcal{D}'(X), \ 
  \rho_{0,1} = \bar{\rho}_{0,1}, \  \bu\cdot\mathbf{n}|_{\partial X} = 0 ,\\
  \partial_t\xi + \nabla\cdot(\xi \mathbf{v}) = 0 \mathrm{\ in\ } \mathcal{D}'(Y),
  \ \bv\cdot\mathbf{n}|_{\partial Y} = 0,\ 
  \xi_t(\by) = (\TK\rho_t)(\by).
  \end{array}
  \right\}
\end{align}
As a special case of Theorem \ref{theorem:Kantorovich_UnSyncOT}, we have the following corollary which describes the Kantorovich SyncOT (\ref{eqn:Kantorovich_SynOT_2Spaces}) solely in the primary space $X$.

\begin{corollary}\label{corollary:Kantorovich_SyncOT}
The Kantorovich balanced synchronized optimal transport (\ref{eqn:Kantorovich_SynOT_2Spaces}) is equivalent to the following single-space balanced dynamical OT:
\begin{align*}
\mathrm{SOT}_{\mathrm{K}}(\bar{\rho}_0,\bar{\rho}_1) = \inf_{(\rho,\mathbf{u})\in\mathcal{C}_{\mathrm{BB}}^X} \ \int_0^1 \int_{X} \bigg[ c_1 \|\mathbf{u}_t\|^2 + c_2\TK^*\left(\|\nabla\Phi_t\|^2\right) \bigg] \rho_t d\bx  
dt,
\end{align*}
in which $\mathcal{C}_{\mathrm{BB}}^X$ is defined in \eqref{eqn:feasible_BB_flat}, and $\Phi_t$ is defined in (\ref{eqn:Phi}). Besides, we have the following bound estimate for Kantorovich SyncOT,
\begin{align*}
    c_1 \mathrm{BB}_{X}^2(\bar{\rho}_0,\bar{\rho}_1) + c_2 \mathrm{BB}_{Y}^2(\TK\bar{\rho}_0,\TK\bar{\rho}_1) \le \mathrm{SOT}_{\mathrm{K}}(\bar{\rho}_0,\bar{\rho}_1) \le (c_1+c_2\Lambda^2) \mathrm{BB}_{X}^2(\bar{\rho}_0,\bar{\rho}_1),
\end{align*}
in which $\Lambda = \mathrm{esssup}_{t\in(0,1)}\|\TK\|_{T_{\rho_t}\rightarrow T_{\TK\rho_t}}$.
\end{corollary}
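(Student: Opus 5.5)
The plan is to obtain the corollary as the specialization $\beta=0$, $\alpha=1$ of Theorem \ref{theorem:Kantorovich_UnSyncOT}, verifying along the way that each step of that proof survives when the reaction terms are removed.

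\textbf{Single-space reformulation.} With $\beta=0$ the feasible set $\mathcal{C}^{\mathrm{K}}_{\mathrm{USOT}}$ becomes exactly $\mathcal{C}^{\mathrm{K}}_{\mathrm{SOT}}$ in (\ref{eqn:Kantorovich_SynOT_2Spaces_Constraints}). The constraint $\xi_t h_t=\TK(\rho_t g_t)$ and the growth variables drop out, since no reaction cost remains. Testing the $Y$-continuity equation against $\phi\in C_c^\infty(Y)$ uses that $\TK^*\phi\in C_c^\infty(X)$ because $\pi$ has compact support. As in (\ref{eqn:v_Kantorovich}), this gives
\begin{align*}
-\nabla\cdot(\xi_t\bv_t)=\TK\big(-\nabla\cdot(\rho_t\bu_t)\big).
\end{align*}
For a fixed primary pair $(\rho,\bu)$, the secondary action is then minimized pointwise in $t$ by $\bv_t=\nabla\Phi_t$ with $\Phi_t$ as in (\ref{eqn:Phi}); this is Otto's minimal-norm argument. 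Rewriting $\int_Y\xi_t\|\nabla\Phi_t\|^2d\by=\int_X\TK^*(\|\nabla\Phi_t\|^2)\rho_t d\bx$ via $\xi_t=\TK\rho_t$ gives the stated single-space action. Taking the infimum over $(\rho,\bu)\in\mathcal{C}^X_{\mathrm{BB}}$ then yields the reformulation, because optimizing over $\bv$ first commutes with the outer infimum.

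\textbf{Bounds.} For the lower bound, any feasible quadruple gives $(\rho,\bu)\in\mathcal{C}^X_{\mathrm{BB}}(\bar\rho_0,\bar\rho_1)$ and $(\xi,\bv)\in\mathcal{C}^Y_{\mathrm{BB}}(\TK\bar\rho_0,\TK\bar\rho_1)$. The action is therefore at least $c_1\mathrm{BB}_X^2+c_2\mathrm{BB}_Y^2$, and taking the infimum gives the bound. For the upper bound, apply (\ref{eqn:Transport_part}) at each $t$ to get
\begin{align*}
\int_Y\xi_t\|\bv_t\|^2d\by\le\Lambda^2\int_X\rho_t\|\bu_t\|^2d\bx,
\end{align*}
so that the total action is at most $(c_1+c_2\Lambda^2)\int_0^1\int_X\rho_t\|\bu_t\|^2$. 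Taking the infimum over $\mathcal{C}^X_{\mathrm{BB}}$ gives $(c_1+c_2\Lambda^2)\mathrm{BB}_X^2$. Without the reaction part, the constant is $c_1+c_2\Lambda^2$ rather than $\max\{c_1+c_2\Lambda^2,1\}$; the $1$ in the theorem came only from the reaction estimate (\ref{eqn:Reaction_part}).

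\textbf{Main obstacle.} The main difficulty is the dependence of $\Lambda$ on the curve $\rho_t$. For the upper bound we want to evaluate along a near-minimizing BB curve, so $\Lambda$ must be read as the supremum along that curve, or more robustly as a uniform bound over the relevant curves. I would state this interpretation explicitly, then pass to the limit along a minimizing sequence using that bound.
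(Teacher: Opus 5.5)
Your proposal is correct and follows the paper's route. The paper gives no separate proof: it obtains the corollary as the $\beta=0$, $\alpha=1$ special case of Theorem \ref{theorem:Kantorovich_UnSyncOT}, and you rerun that proof with the reaction terms deleted. Your two refinements are also accurate and make explicit what the paper leaves implicit. First, the sharper constant $c_1+c_2\Lambda^2$ (instead of $\max\{c_1+c_2\Lambda^2,1\}$) requires going back into the proof, because the $1$ enters only through (\ref{eqn:Reaction_part}). Second, $\Lambda$ must be read along the curve used for the upper bound; since an exact BB minimizer exists, you can take that curve and no limiting argument is needed.
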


As a special case of Corollary \ref{corollary:K_UnSyncOT_dissipation}, the following corollary provides a reformulation of the Kantorovich SyncOT in the form of dissipation potential.

\begin{corollary}\label{corollary:K_SyncOT_dissipation}
    The Kantorovich (balanced) synchronized optimal transport \reff{eqn:Kantorovich_SynOT_2Spaces} can be rewritten in the form of dissipation potential (\ref{eqn:dissipation_potential}) 
    with $\Psi(\rho,\sigma) = \Psi_{\rho}(\sigma)$ given below
    \begin{align*}
        \Psi_{\rho}(\sigma) = c_1 \left\| \sigma \right\|_{-1,\rho}^2 +
         c_2 \left\| \TK\sigma \right\|_{-1,{\TK}\rho}^2,
    \end{align*}
    which leads to the Riemannian operator $\mathbb{G}(\rho)$,
    \begin{align*}
        \mathbb{G}(\rho) = c_1(-\Delta_\rho)^{-1} + c_2 \TK^*(-\Delta_{\TK\rho})^{-1}\TK.
    \end{align*}
    This is one of the cases in the list \ref{eqn:K_operators}.
    
\end{corollary}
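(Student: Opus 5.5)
Setting $\beta=0$, $\alpha=1$ in Corollary \ref{corollary:K_UnSyncOT_dissipation} gives the dissipation potential directly; the remaining work is to identify the associated Riemannian operator $\mathbb{G}(\rho)$. With $\beta=0$ the constraint $\partial_t\rho=\sigma$ reads $\sigma=-\nabla\cdot(\rho\bu)$. The inner minimization (\ref{eqn:K_UnSyncOT_inner_min}) then no longer contains a free growth variable $g$: the minimization over $g$ in (\ref{eqn:K_UnSyncOT_dissipationpotential}) is vacuous and both reaction terms vanish. The argument from that proof carries over unchanged. For fixed $\rho$ and $\sigma$, the optimal primary velocity is $\bu^*=\nabla\phi$ with $-\nabla\cdot(\rho\nabla\phi)=\sigma$ and zero Neumann data, which gives the cost $\|\sigma\|_{-1,\rho}^2$. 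The secondary velocity must satisfy (\ref{eqn:v_Kantorovich}), i.e. $-\nabla\cdot(\xi\bv)=\TK\sigma$. This depends only on $\sigma$, not on the particular choice of $\bu$, so the two minimizations decouple. Otto's argument then gives $\bv^*=\nabla\psi$ with cost $\|\TK\sigma\|^2_{-1,\TK\rho}$. Summing the two costs with weights $c_1,c_2$ yields $\Psi_\rho(\sigma)$.

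Next I would write each Wasserstein norm as a quadratic form via the potential representation from Section 2: $\|h\|^2_{-1,\rho}=\langle(-\Delta_\rho)^{-1}h,h\rangle_{L^2(X)}$. This gives
\begin{align*}
\Psi_\rho(\sigma)=c_1\langle(-\Delta_\rho)^{-1}\sigma,\sigma\rangle_{L^2(X)}+c_2\langle(-\Delta_{\TK\rho})^{-1}\TK\sigma,\TK\sigma\rangle_{L^2(Y)}.
\end{align*}
The key step is to move $\TK$ across the second inner product. Here $\TK^*$ from (\ref{eqn:TK_star}) is exactly the $L^2$ adjoint of $\TK$, since both are integration against the same kernel $\pi$ and Fubini applies because $\pi$ has compact support. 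Hence the second term equals $\langle\TK^*(-\Delta_{\TK\rho})^{-1}\TK\sigma,\sigma\rangle_{L^2(X)}$. Comparing with $\Psi_\rho(\sigma)=\langle\mathbb{G}(\rho)\sigma,\sigma\rangle$ identifies $\mathbb{G}(\rho)=c_1(-\Delta_\rho)^{-1}+c_2\TK^*(-\Delta_{\TK\rho})^{-1}\TK$. This operator is symmetric, because $(-\Delta_\rho)^{-1}$ is self-adjoint on mean-zero functions and the second term has the sandwich form $B^*CB$. It is nonnegative, and positive definite on the tangent space by the $c_1$ term when $c_1>0$.

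The main technical obstacle is well-posedness of $(-\Delta_{\TK\rho})^{-1}\TK\sigma$. The Neumann problem requires $\int_Y\TK\sigma\,d\by=0$. This holds by the Markov property (\ref{eqn:Markov_kernel}) together with $\int_X\sigma\,d\bx=0$ in the balanced setting: $\int_Y\TK\sigma=\int_X\sigma\int_Y\pi\,d\by\,d\bx=\int_X\sigma=0$. I would also assume $\TK\rho>0$, or otherwise interpret the operators on the support, so that the elliptic problem is uniquely solvable up to constants. The potential is then defined modulo constants, and pairing it with the mean-zero function $\TK\sigma$ removes that ambiguity.
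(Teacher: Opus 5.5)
Your proposal is correct and follows the paper's route. The paper obtains this corollary as the $\beta=0$, $\alpha=1$ specialization of Corollary \ref{corollary:K_UnSyncOT_dissipation}. It reads off $\mathbb{G}(\rho)$ from the identity $\|h\|_{-1,\rho}^2=\langle(-\Delta_\rho)^{-1}h,h\rangle$ together with the $L^2$-adjointness of $\TK^*$ and $\TK$. Your checks of the Neumann compatibility condition via the Markov property, and of symmetry and positivity of $\mathbb{G}(\rho)$, are useful details that the paper leaves implicit.
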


\section{Pure Reaction Case ($\alpha=0$): Synchronized Fisher-Rao Problem}\label{section:Extreme_case_II}

When $\alpha = 0$ (and set $\beta = 1$), the UnSyncOT (\ref{eqn:SynOT_dSpaces}) degenerates to synchronized Fisher-Rao (SyncFR) problem. In a two-space case when $X \subseteq \mathbb{R}^m, Y \subseteq \mathbb{R}^n$ are open and convex, SyncFR aims to minimize the following cross-space action
\begin{align}\label{eqn:FR_SynOT_2Spaces}
\mathrm{SFR}^2(\bar{\rho}_0,\bar{\rho}_1) := \inf_{(\rho,g;\xi,h)\in\mathcal{C}_{\mathrm{SFR}}} A_{\mathrm{SFR}} = \int_0^1 \bigg(  c_1 \int_{X}  \rho_t\|g_t\|^2 d\bx   
 + c_2 \int_{Y}   \xi_t \|h_t\|^2   d\by   \bigg) dt,
\end{align}
with feasible set defined as
\begin{align}\label{eqn:FR_SynOT_2Spaces_Constraints}
  \mathcal{C}_{\mathrm{SFR}}= \left\{ (\rho,g;\xi,h) \middle| 
  \begin{array}{l}
  \partial_t\rho = \rho g \mathrm{\ in\ } \mathcal{D}'(X), \ 
  \rho_{0,1} = \bar{\rho}_{0,1}, \\
  \partial_t\xi = \xi h \mathrm{\ in\ } \mathcal{D}'(Y), \ \xi_t d\by = \mathcal{T}(\rho_t d\bx).
  \end{array}
  \right\}
\end{align}

\subsection{Monge Synchronized Fisher-Rao Problem}

When $\mathcal{T} = \mathcal{T}_{\mathrm{M}} = \mathbf{T}_{\sharp}$, the synchronization constraint $\xi_t = \mathbf{T}_{\sharp}\rho_t$ in (\ref{eqn:FR_SynOT_2Spaces_Constraints}) with $\mathbf{T}$ being a $C^1$ embedding $\mathbf{T}:X\rightarrow Y = \mathbf{T}(X)$ forces $g_t(\bx) = h_t(\mathbf{T}(\bx))$. In this case, the two-space action in \reff{eqn:FR_SynOT_2Spaces} collapses to the single FR action in \reff{eqn:FR}. So the synchronized two-space FR brings no new dynamics beyond one space. We summary this fact in the following theorem.

\begin{theorem}
    Let $\mathbf{T}$ be a $C^1$ embedding $\mathbf{T}: X \rightarrow Y = \mathbf{T}(X)$ and consider Monge synchronized Fisher-Rao problem $\mathrm{SFR}_{\mathrm{M}}$ as given in (\ref{eqn:FR_SynOT_2Spaces}) with $\mathcal{T}=\mathbf{T}_{\sharp}$ in (\ref{eqn:FR_SynOT_2Spaces_Constraints}). The feasibility condition (\ref{eqn:FR_SynOT_2Spaces_Constraints}) implies that $h_t(\mathbf{T}(\bx)) = g_t(\bx)$. Then the action $A^{\mathrm{M}}_{\mathrm{SFR}} = A_{\mathrm{SFR}}$ in (\ref{eqn:FR_SynOT_2Spaces}) reduces to
    \begin{align*}
        A^{\mathrm{M}}_{\mathrm{SFR}} = \int_0^1 \int_X  |g_t|^2 \rho_t d\bx dt.
    \end{align*}
    Consequently,
    \begin{align}
        \mathrm{SFR}_{\mathrm{M}}(\bar{\rho}_0,\bar{\rho}_1) = \mathrm{FR}_X(\bar{\rho}_0,\bar{\rho}_1),
    \end{align}
    and the minimizing path is the single-space FR geodesic on $X$,
    \begin{align}
        \rho_t(\bx) = \Big( (1-t)\sqrt{\bar{\rho}_0(\bx)}
        +t\sqrt{\bar{\rho}_1(\bx)} \Big)^2, \quad 
        g_t(\bx) = \frac{\partial_t\rho_t(\bx)}{\rho_t(\bx)} = \partial_t \log \rho_t(\bx).
    \end{align}
\end{theorem}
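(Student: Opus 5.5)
The argument has three steps: identify $h$ in terms of $g$ using the synchronization constraint, pull the secondary action back to $X$ by change of variables, and invoke the known single-space FR solution (\ref{eqn:FR_explicitform}).

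\emph{Step 1: the constraint forces $h_t\circ\mathbf{T}=g_t$.} This mirrors the computation in the proof of the Monge UnSyncOT theorem with $\alpha=0$, $\beta=1$. Fix $\phi\in C_c^\infty(Y)$. Using $\xi_t\,d\by=\mathbf{T}_\sharp(\rho_t\,d\bx)$ and $\partial_t\rho_t=\rho_t g_t$, I will write
\begin{align*}
\frac{d}{dt}\int_Y \xi_t\phi
=\frac{d}{dt}\int_X\rho_t\,\phi\circ\mathbf{T}
=\int_X \rho_t g_t\,\phi\circ\mathbf{T}.
\end{align*}
On the other hand, $\partial_t\xi_t=\xi_t h_t$ gives
\begin{align*}
\frac{d}{dt}\int_Y \xi_t\phi=\int_Y\xi_t h_t\phi=\int_X\rho_t\,(h_t\circ\mathbf{T})\,\phi\circ\mathbf{T}.
\end{align*}
Since $\mathbf{T}$ is an embedding, $\{\phi\circ\mathbf{T}\}$ is rich enough to separate points of $X$. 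Hence $\rho_t(g_t-h_t\circ\mathbf{T})=0$ a.e. So $h_t\circ\mathbf{T}=g_t$ $\rho_t$-a.e., which is all the action sees; on $\{\rho_t=0\}$ the growth rate is irrelevant and may be chosen this way.

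\emph{Step 2: reduce the action.} Pushing forward as in (\ref{eqn:Tpush_eqn}), I get $\int_Y\xi_t h_t^2=\int_X\rho_t\,(h_t\circ\mathbf{T})^2=\int_X\rho_t g_t^2$. Therefore the action is $(c_1+c_2)\int_0^1\!\int_X\rho_t g_t^2=\int_0^1\!\int_X\rho_t g_t^2$, using $c_1+c_2=1$.

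\emph{Step 3: the two problems have the same feasible set.} Every $(\rho,g)\in\mathcal{C}_{\mathrm{FR}}$ lifts to a feasible quadruple by setting $\xi_t=\mathbf{T}_\sharp\rho_t$ and $h_t=g_t\circ\mathbf{T}^{-1}$. One checks $\partial_t\xi=\xi h$ by the same test-function computation run backwards. Conversely, every feasible quadruple projects to $(\rho,g)\in\mathcal{C}_{\mathrm{FR}}$. The actions agree under this correspondence, so taking infima gives $\mathrm{SFR}^2_{\mathrm{M}}=\mathrm{FR}^2_X$. The stated minimizer is then the explicit FR geodesic from (\ref{eqn:FR_explicitform}), together with its lift.

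The main obstacle is Step 1: making the identification $h_t\circ\mathbf{T}=g_t$ rigorous in the distributional, possibly degenerate setting where $\rho_t$ vanishes on a set. I would handle this by working with the fluxes $H_t=\rho_t g_t$ and $\mathbf{T}_\sharp$-pushed measures, so the identification is needed only $\rho_t$-a.e.
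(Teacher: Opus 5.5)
Your proposal is correct and follows the paper's proof essentially step for step. The shared steps are: computing $\frac{d}{dt}\int_Y \phi\,\xi_t$ two ways (via $\xi_t=\mathbf{T}_\sharp\rho_t$ with $\partial_t\rho_t=\rho_t g_t$, and via $\partial_t\xi_t=\xi_t h_t$) to obtain $h_t\circ\mathbf{T}=g_t$, pushing the secondary action forward to $X$, using $c_1+c_2=1$, and invoking the explicit FR minimizer (\ref{eqn:FR_explicitform}). Your lifting of $\mathcal{C}_{\mathrm{FR}}$ into the synchronized feasible set (Step 3) and your remark that the identification is only needed $\rho_t$-a.e.\ are details the paper leaves implicit in its ``Consequently''; they are refinements, not a different route.
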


\begin{proof}
    For any test function $\phi\in C_c^{\infty}(Y)$, we have
    \begin{align*}
        \frac{d}{dt}\int_Y \phi(\by) \xi_t(\by) d\mathrm{vol}_Y
        =\frac{d}{dt}\int_X \phi(\mathbf{T}(\bx))\rho_t(\bx)d\bx
        =\int_X \phi(\mathbf{T}(\bx))g_t(\bx)\rho_t(\bx) d\bx.
    \end{align*}
    Since $\xi_t$ satisfies $\partial_t\xi_t = \xi_t h_t$, then
    \begin{align*}
        \frac{d}{dt}\int_Y \phi(\by) \xi_t(\by) d\mathrm{vol}_Y 
        = \int_Y \phi(\by) \xi_t(\by) h_t(\by) d\mathrm{vol}_Y
        = \int_X \phi(\mathbf{T}(\bx))  h_t(\mathbf{T}(\bx)) 
        \rho_t(\bx) d\bx.
    \end{align*}
    Comparing the two right-hand sides yields $h_t(\mathbf{T}(\bx)) = g_t(\bx)$. Then the SyncFR action becomes
    \begin{align*}
        A^{\mathrm{M}}_{\mathrm{SFR}} &= \int_0^1 \bigg( c_1 \int_{X} |g_t|^2 \rho_t d\bx   
        + c_2 \int_{Y} |h_t|^2 \xi_t d\mathrm{vol}_Y   \bigg) dt \\
        &= \int_0^1 \bigg( c_1 \int_{X} |g_t|^2 \rho_t d\bx   
        + c_2 \int_{X} |h_t\circ\mathbf{T}|^2 \rho_t d\bx   \bigg) dt \\
        &= \int_0^1 \bigg( c_1 \int_{X} |g_t|^2 \rho_t d\bx   
        + c_2 \int_{X} |g_t|^2 \rho_t d\bx   \bigg) dt
        = A_{\mathrm{FR}}.
    \end{align*}
    Consequently, $\mathrm{SFR}_{\mathrm{M}}(\bar{\rho}_0,\bar{\rho}_1) = \mathrm{FR}_X(\bar{\rho}_0,\bar{\rho}_1)$. The minimizing path is due to the results in \cite{chizat2018interpolating}.
\end{proof}

\subsection{Kantorovich Synchronized Fisher-Rao Problem}

In the setting of Kantorovich SyncFR problem, we assume $X\subseteq \bbR^m$ and $Y\subseteq \bbR^n $ to be open and convex. Taking the map $\mathcal{T}$ in SyncFR constraint \reff{eqn:FR_SynOT_2Spaces_Constraints} to be $\mathcal{T} = \mathcal{T}_{\mathrm{K}}$ as defined in \reff{eqn:TK}, the Kantorovich SyncFR does not collapse the two-space action $A_{\mathrm{SFR}}$ into the single FR action $A_{\mathrm{FR}}$ in \reff{eqn:FR}. In this case, we rather provide lower and upper bound estimates for the two-space action. We summary the results in the following theorem.
\begin{theorem}\label{theorem:Kantorovich_FR}
    The Kantorovich synchronized Fisher-Rao problem $\mathrm{SFR}_{\mathrm{K}}$ given in (\ref{eqn:FR_SynOT_2Spaces}) with $\mathcal{T} = \mathcal{T}_{\mathrm{K}}$ defined in \reff{eqn:FR_SynOT_2Spaces_Constraints} can be recast into the following minimization problem    \begin{align}\label{eqn:Kantorovich_FR_1space}
        \mathrm{SFR}_{\mathrm{K}}^2(\bar{\rho}_0,\bar{\rho}_1) = \inf_{(\rho,g)\in\mathcal{C}_{\mathrm{FR}}(\bar{\rho}_0,\bar{\rho}_1)} \int_0^1 \int_{X}  \Big(c_1 |g_t|^2  
        + c_2  \TK^*\left(|h_t|^2\right) \Big) \rho_t d\bx dt,
\end{align}
in which $\mathcal{C}_{\mathrm{FR}}$ is the feasible set defined in (\ref{eqn:FR_feasibility}) for the classic single-space FR problem, and $h_t$ satisfies $\xi_t h_t = \TK(\rho_t g_t)$. We further have the following sharp bounds for SyncFR,
\begin{align}\label{eqn:SyncFR_bounds}
    c_1 \mathrm{FR}_X^2(\bar{\rho}_0,\bar{\rho}_1) 
    + 
    c_2 \mathrm{FR}_Y^2(\mathcal{T}_\mathrm{K}\bar{\rho}_0,\mathcal{T}_\mathrm{K}\bar{\rho}_1) \le \mathrm{SFR}_{\mathrm{K}}^2 
    \le
    \mathrm{FR}_{X}^2(\bar{\rho}_0,\bar{\rho}_1).
\end{align}
Moreover, FR contracts under $\TK$,
\begin{align}\label{eqn:FR_contractor}
    \mathrm{FR}_Y^2(\TK\bar{\rho}_0, \TK\bar{\rho}_1) \le 
    \mathrm{FR}_X^2(\bar{\rho}_0, \bar{\rho}_1).
\end{align}

\end{theorem}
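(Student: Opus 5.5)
The reformulation (\ref{eqn:Kantorovich_FR_1space}) follows the pattern of the proof of Theorem \ref{theorem:Kantorovich_UnSyncOT} with $\alpha=0$, $\beta=1$. First I will identify the secondary growth rate. For a test function $\phi\in C_c^\infty(Y)$, the compact support of $\pi$ gives $\TK^*\phi\in C_c^\infty(X)$. Hence
\[
\frac{d}{dt}\langle\phi,\xi_t\rangle_{L^2(Y)}=\frac{d}{dt}\langle \TK^*\phi,\rho_t\rangle_{L^2(X)}=\langle\phi,\TK(\rho_tg_t)\rangle_{L^2(Y)}.
\]
Comparing with $\partial_t\xi_t=\xi_th_t$ forces $\xi_th_t=\TK(\rho_tg_t)$, so $h_t$ is uniquely determined by $(\rho,g)$. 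There is no minimization over $h$, in contrast with $\bv$ in the transport case. Substituting $\xi_t=\TK\rho_t$ and using duality gives
\[
\int_Y\xi_t h_t^2\,d\by=\int_X\TK^*(h_t^2)\rho_t\,d\bx.
\]
This yields the single-space action with the feasible set $\mathcal{C}_{\mathrm{FR}}(\bar\rho_0,\bar\rho_1)$. Conversely, every $(\rho,g)\in\mathcal{C}_{\mathrm{FR}}$ lifts to a feasible quadruple via $\xi_t=\TK\rho_t$ and $h_t=\TK(\rho_tg_t)/\TK\rho_t$, with $h_t:=0$ on $\{\xi_t=0\}$. So the two infima coincide.

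The lower bound in (\ref{eqn:SyncFR_bounds}) comes from the same splitting argument as in Theorem \ref{theorem:Kantorovich_UnSyncOT}. For any feasible quadruple, $(\rho,g)\in\mathcal{C}_{\mathrm{FR}}^X(\bar\rho_0,\bar\rho_1)$ and $(\xi,h)\in\mathcal{C}^Y_{\mathrm{FR}}(\TK\bar\rho_0,\TK\bar\rho_1)$. Each piece of the action therefore dominates the corresponding FR distance, and taking the infimum gives the bound.

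For the upper bound I will reuse the conditional-variance identity (\ref{eqn:var}) and the computation (\ref{eqn:Reaction_part}), which rely on the Markov property (\ref{eqn:Markov_kernel}). Together they give, for every $t$,
\[
\int_Y\xi_th_t^2\,d\by=\int_X\rho_tg_t^2\,d\bx-\int_Y\TK\rho_t\,\mathrm{Var}_{\bx|\by}(g_t)\,d\by\le\int_X\rho_tg_t^2\,d\bx.
\]
Hence the two-space action is at most $(c_1+c_2)A_{\mathrm{FR}}[\rho,g]=A_{\mathrm{FR}}[\rho,g]$. Evaluating it at the explicit FR geodesic $\rho_t^*=\big((1-t)\sqrt{\bar\rho_0}+t\sqrt{\bar\rho_1}\big)^2$, which lies in $\mathcal{C}_{\mathrm{FR}}$, gives $\mathrm{SFR}_{\mathrm{K}}^2\le\mathrm{FR}_X^2$.

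The contraction (\ref{eqn:FR_contractor}) then follows from the same inequality. The pair $(\TK\rho^*,h^*)$ is admissible for $\mathrm{FR}_Y^2(\TK\bar\rho_0,\TK\bar\rho_1)$, so
\[
\mathrm{FR}_Y^2(\TK\bar\rho_0,\TK\bar\rho_1)\le\int_0^1\!\!\int_Y\xi^*h^{*2}\le A_{\mathrm{FR}}[\rho^*,g^*]=\mathrm{FR}_X^2(\bar\rho_0,\bar\rho_1).
\]
Alternatively, it can be read off from the lower and upper bounds with $c_2=1$.

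The step I expect to be the main obstacle is justifying the distributional identification $\xi_th_t=\TK(\rho_tg_t)$ and the variance identity on the degenerate set $\{\xi_t=0\}$, together with the integrability needed to swap $\frac{d}{dt}$ with $\TK^*$. I would handle the degenerate set with the same convention as in the proof of Theorem \ref{theorem:Kantorovich_UnSyncOT}, where $\TK(\rho_tg_t)=0$ wherever $\TK\rho_t=0$, which follows from $|\TK(\rho g)|\le\TK(\rho|g|)$ and Cauchy--Schwarz. Sharpness of the bounds would be noted via the two extreme kernels: $\pi$ a Dirac graph of a bijection gives equality in the upper bound, and $\pi(\bx,\cdot)$ independent of $\bx$ gives zero secondary cost.
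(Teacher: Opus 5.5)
Your proof is correct and follows the paper's proof. You identify $\xi_t h_t=\TK(\rho_t g_t)$ by testing against $\TK^*\phi$, and split the action for the lower bound. You reuse the conditional-variance identity (\ref{eqn:var})--(\ref{eqn:Reaction_part}) for the upper bound, and obtain (\ref{eqn:FR_contractor}) from the two bounds. Your evaluation along the FR geodesic is just that last step written out. The paper also gives an independent proof of the contraction via Cauchy--Schwarz on the Hellinger affinity, $\int_X\sqrt{\bar{\rho}_0\bar{\rho}_1}\,\pi(\bx,\by)\,d\bx\le\sqrt{\TK\bar{\rho}_0\,\TK\bar{\rho}_1}$.

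The one thing to fix is your sharpness remark. A kernel with $\pi(\bx,\cdot)=b(\cdot)$ independent of $\bx$ does \emph{not} give zero secondary cost in the reaction setting. In that case $\xi_t=m(t)b$ and $h_t=\dot m(t)/m(t)$, so the secondary cost is $\int_0^1\dot m(t)^2/m(t)\,dt$. This is at least $4(\sqrt{m_1}-\sqrt{m_0})^2$ and is positive whenever the total mass varies; this is the paper's separable-kernel example. Zero secondary cost holds only in the balanced transport case, where $\xi_t\equiv b$.

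To exhibit sharpness, use proportional marginals $\bar{\rho}_1=\Lambda\bar{\rho}_0$ as in Example \ref{example:uniform_rate}. Since $\TK$ preserves mass, $\mathrm{FR}_Y^2(\TK\bar{\rho}_0,\TK\bar{\rho}_1)=\mathrm{FR}_X^2(\bar{\rho}_0,\bar{\rho}_1)=4m_0(\sqrt{\Lambda}-1)^2$. The two bounds then coincide for every Markov kernel, so both are attained. Your Dirac-graph example also works, and in fact gives equality in both bounds. When $\bT$ is a diffeomorphism, the Hellinger distance is invariant under $\bT_\sharp$, so $\mathrm{FR}_Y^2(\bT_\sharp\bar{\rho}_0,\bT_\sharp\bar{\rho}_1)=\mathrm{FR}_X^2(\bar{\rho}_0,\bar{\rho}_1)$.
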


\begin{proof}
    Given the feasibility condition of Kantorovich SyncFR
    \begin{align*}
      \mathcal{C}^{\mathrm{K}}_{\mathrm{SFR}}= \left\{ (\rho,g;\xi,h) \middle| 
      \begin{array}{l}
      \partial_t\rho = \rho g \mathrm{\ in\ } \mathcal{D}'(X), \ 
      \rho_{0,1} = \bar{\rho}_{0,1}, \\
      \partial_t\xi = \xi h \mathrm{\ in\ } \mathcal{D}'(Y), \ \xi_t = \TK\rho_t
      \end{array}
      \right\},
    \end{align*}
    it is evident that $\xi_t h_t = \TK(\rho_t g_t)$. Then by following the similar argument as in the proof of Theorem \ref{theorem:Kantorovich_UnSyncOT}, we obtain the bound estimate \reff{eqn:SyncFR_bounds}.

    The contraction inequality is a direct consequence of the bound \reff{eqn:SyncFR_bounds} by subtracting $c_1\mathrm{FR}_X^2$ from the two sides. On the other hand, we can also prove it by using the optimum formula \reff{eqn:FR_explicitform}. Indeed, we use the Cauchy-Schwarz inequality
    \begin{align*}
        \int_X \sqrt{\bar{\rho}_1(\bx)\bar{\rho}_0(\bx)}\pi(\bx,\by)d\bx
        \le 
        \left(\int_X \bar{\rho}_1(\bx)\pi(\bx,\by)d\bx \right)^{\frac{1}{2}}
        \left(\int_X \bar{\rho}_0(\bx)\pi(\bx,\by)d\bx \right)^{\frac{1}{2}}   
        = \sqrt{(\TK\bar{\rho}_1) (\TK\bar{\rho}_0)}.
    \end{align*}
    Then it follows that
    \begin{align*}
        (\sqrt{\TK\bar{\rho}_1} - \sqrt{\TK\bar{\rho}_0})^2 &= \TK\bar{\rho}_1 + \TK\bar{\rho}_0 - 2\sqrt{(\TK\bar{\rho}_1) (\TK\bar{\rho}_0)} \\
        &\le \TK\bar{\rho}_1 + \TK\bar{\rho}_0 - 2 \int_X \sqrt{\bar{\rho}_1(\bx)\bar{\rho}_0(\bx)}\pi(\bx,\by)d\bx \\
        &= \int_X (\sqrt{\bar{\rho}_1(\bx)} - \sqrt{\bar{\rho}_0(\bx)})^2\pi(\bx,\by) d\bx.
    \end{align*}
    Integrating the above inequality over $\by$ and noting that $\pi$ is a Markov kernel, we obtain
    \begin{align*}
        \int_Y (\sqrt{\TK\bar{\rho}_1} - \sqrt{\TK\bar{\rho}_0})^2 d\by \le \int_X (\sqrt{\bar{\rho}_1} - \sqrt{\bar{\rho}_0})^2 d\bx,
    \end{align*}
    which gives the contraction inequality (\ref{eqn:FR_contractor}) by using the explicit formulation of the FR problem in (\ref{eqn:FR_explicitform}).
\end{proof}

We discuss several special cases for the Kantorovich SyncFR problem. 

\begin{example}
    When the Markov kernel is a deterministic one: $\pi(\bx,\by) = \delta_{\by = \mathbf{T}(\bx)}$ for a diffeomorphism $\bT: X\rightarrow Y$, then $\TK$ reduces to $\bT_{\sharp}$, and the Kantorovich SyncFR degenerates to Monge SyncFR such that the upper bound in (\ref{eqn:SyncFR_bounds}) becomes equality. To see this, one can easily verify the following equation for any test function $\phi\in C_{c}^{\infty}(Y)$,
    \[
        \int_Y \phi(\by) \TK(\rho)(\by) d\by = \int_X \rho(\bx) \int_Y \phi(\by)\delta_{\by = \bT(\bx)} d\by d\bx = \int_X \rho(\bx)\phi(\bT(\bx)) d\bx = \int_Y \phi(\by) d\bT_{\sharp}(\rho)(\by).
    \]
\end{example}

\begin{example}\label{example:uniform_rate}
    In the Kantorovich SyncFR formulation,
    when the reaction rate $g_t(\bx)$ is spatially uniform: $g_t(\bx) = g(t)$, it follows that $h_t(\by) = h(t)$ is also spatially uniform and equal to $g(t)$. The ODE constraint $\partial_t\rho_t(\bx) = \rho_t(\bx)g(t)$ implies that $\rho_t(\bx)=\lambda(t)\bar{\rho}_0(\bx)$ with $\lambda(t) = \exp(\int_0^t g(s) ds)$. Hence we require $\bar{\rho}_1 = \Lambda\bar{\rho}_0$ so that a path $\rho_t$ with uniform $g(t)$ can hit $\bar{\rho}_1$.

    Since $\xi_t$ now satisfies the same ODE as $\rho_t$, the synchronized curve on $Y$ is $\xi_t(\by) = \lambda(t)(\TK\bar{\rho}_0)(\by)$. With $\rho_t = \lambda\rho_0$, $\xi_t = \lambda\xi_0$, $g = \dot{\lambda}/\lambda$, $h = g$, we have
    \[\int_X \rho_t(\bx)g(t)^2 d\bx = \int_Y \xi_t(\bx)h(t)^2 d\by = m_0 \frac{\dot{\lambda}^2}{\lambda}, \quad
    m_0 = \int_X \bar{\rho}_0(\bx) d\bx.
    \]
    Hence the Kantorovich SyncFR action reduces to
    \begin{align*}
        A^{\mathrm{K}}_{\mathrm{SFR}}[\lambda] = \int_0^1 m_0 \frac{\dot{\lambda}^2}{\lambda} dt.
    \end{align*}
    Let $s(t):=\sqrt{\lambda(t)}$. Then $\dot{\lambda}^2/\lambda = 4\dot{s}^2$. With the boundary condition $s(0)=\sqrt{1}, s(1) = \sqrt{\Lambda}$, the optimal solution is the straight line
    \begin{align*}
        s(t) = (1-t)\cdot \sqrt{1} + t \sqrt{\Lambda}, \quad
        \lambda(t) = \Big[(1-t)\cdot \sqrt{1} + t \sqrt{\Lambda}\Big]^2, \quad
        g(t) = \frac{2(\sqrt{\Lambda}-1)}{(1-t) + t \sqrt{\Lambda}}.
    \end{align*}
    Therefore $\mathrm{SFR}_{\mathrm{K}}^2 = 4m_0(\sqrt{\Lambda}-1)^2$.

    On the other hand, if $\bar{\rho}_1 = \Lambda \bar{\rho}_0$, according to (\ref{eqn:FR_explicitform}), 
    \begin{align*}
        \mathrm{FR}_{X}^2(\bar{\rho}_0,\bar{\rho}_1) = 4m_0(\sqrt{\Lambda}-1)^2, \quad
        \mathrm{FR}_{Y}^2(\TK\bar{\rho}_0,\TK\bar{\rho}_1) = 4m_0(\sqrt{\Lambda}-1)^2,
    \end{align*}
    therefore,
    \begin{align*}
        \mathrm{SFR}_{\mathrm{K}}^2(\bar{\rho}_0,\bar{\rho}_1) = c_1 \mathrm{FR}_{X}^2(\bar{\rho}_0,\bar{\rho}_1) + c_2 \mathrm{FR}_{Y}^2(\TK\bar{\rho}_0,\TK\bar{\rho}_1) = \mathrm{FR}_{X}^2(\bar{\rho}_0,\bar{\rho}_1).
    \end{align*}
    Namely, the two bounds in (\ref{eqn:SyncFR_bounds}) are reached when reaction rate $g_t$ is spatially uniform.
\end{example}

\begin{example}
    In the Kantorovich SyncFR formulation,
    when the Markov kernel is separable $\pi(\bx,\by) = a(\bx) b(\by)$, the Markovian condition (\ref{eqn:Markov_kernel}) forces $a(\bx)= (\int_Y b(\by)d\by)^{-1}=\mathrm{constant}$. Without loss of generality, we can normalize $\int_Y b(\by) d\by = 1$ and set $a(\bx)\equiv 1$.
    In this case, 
    \begin{align*}
        \xi_t(\by) = (\TK \rho_t)(\by) = \int_X\rho_t(\bx)b(\by) d\bx = m(t)b(\by), \quad m(t) = \int_X \rho_t(\bx) d\bx
    \end{align*}
    and
    \begin{align*}
        \TK(\rho_t g_t)(\by) = \left( \int_X \rho_t(\bx)g_t(\bx) d\bx \right) b(\by) = \dot{m}(t)b(\by).
    \end{align*}
    Hence the synchronized rate on $Y$ is spatially uniform
    \begin{align*}
        h_t(\by) = \frac{\TK(\rho_t g_t)}{\TK \rho_t}(\by) = \frac{\dot{m}(t)}{m(t)},
    \end{align*}
    and the $Y$-action reduces into
    \begin{align*}
        \int_Y \xi_t h_t^2 d\by = \frac{\dot{m}(t)^2}{m(t)}\int_Y b(\by) d\by = \frac{\dot{m}(t)^2}{m(t)}.
    \end{align*}
    Therefore the SyncFR action becomes
    \begin{align*}
        A^{\mathrm{K}}_{\mathrm{SFR}} = \int_0^1 \left( c_1\int_X\rho_t g_t^2d\bx + c_2 \frac{\dot{m}(t)^2}{m(t)}  \right) dt.
    \end{align*}

    According to the Cauchy-Schwarz inequality,
    \begin{align*}
        \int_X \rho_t g_t^2 d\bx \ge \frac{(\int_X \rho_t g_t d\bx)^2}{\int_X \rho_t d\bx} = \frac{\dot{m}(t)^2}{m(t)},
    \end{align*}
    which implies
    \begin{align}\label{eqn:SyncFR_mt}
        A^{\mathrm{K}}_{\mathrm{SFR}} \ge (c_1+c_2)\int_0^1 \frac{\dot{m}(t)^2}{m(t)} dt = \int_0^1 \frac{\dot{m}(t)^2}{m(t)} dt.
    \end{align}
    Minimizing the right hand side of (\ref{eqn:SyncFR_mt}) over $m(t)$ with boundary conditions $m_0 = \int_X \bar{\rho}_0 d\bx$ and $m_1 = \int_X \bar{\rho}_1 d\bx$ gives
    \begin{align*}
        m(t) = \big((1-t)\sqrt{m_0}+t\sqrt{m_1}\big)^2, \quad
        \inf_{m_t} \int_0^1 \frac{\dot{m}(t)^2}{m(t)} dt  = 4(\sqrt{m_1}-\sqrt{m_0})^2.
    \end{align*}
    Therefore Kantorovich SyncFR has lower bound: $\mathrm{SFR}_{\mathrm{K}}^2 \ge 4(\sqrt{m_1}-\sqrt{m_0})^2$. The equality holds when $g_t(\bx) = g(t)$ is spatially uniform, which is the case discussed in Example \ref{example:uniform_rate}.
\end{example}

\section{Numerical Methods for UnSyncOT}\label{section:Numerical_Monge}
In this section, we present numerical methods for solving the UnSyncOT problem. For simplicity, we focus on the 2D case and take the primary space $X=[0,1]^2$. The secondary space $Y$ depends on the transform $\mathcal{T}$.

We follow \cite{benamou2000computational,papadakis2014optimal,cang2025synchronized} to introduce the change of variable 
\begin{align}\label{eqn:change_of_variable}
(\bu_t,\rho_t, g_t) \rightarrow (\bfm_t,\rho_t, H_t),
\end{align}
where $\bfm_t = \rho_t \bu_t$ is the momentum  and $H_t=\rho_t g_t$ is the growth, just as we did in the proof of Corollary \ref{corollary:Monge_UnSyncOT_dual}. Then we introduce the uniform staggered grid for $(\mathbf{m}_t,\rho_t)$ and centered grid for $H_t$.

\subsection{Discretization on Uniform Staggered Grids}\label{subsection:Staggered}

In this section, we introduce a uniform staggered grid for the discretization of the UnSyncOT (\ref{eqn:SyncOT_momentum}). We denote by $M+1, N+1, Q+1$ the numbers of uniform staggered nodes along $\bx = (x, y)\in[0,1]^2$ and time $t\in[0,1]$. The corresponding grid sizes are $\Delta x = \frac{1}{M}, \Delta{y} = \frac{1}{N}, \Delta{t} = \frac{1}{Q}$.

The following index sets of staggered grids are defined:
\begin{align}
  \mathcal{S}_{\text{s}}^{x} &= \{ (i,j,k)\in\mathbb{Z}^3:\ 0\le i \le M, \ 0\le j \le N-1, \ 0\le k \le Q-1 \}, \\
  \mathcal{S}_{\text{s}}^{y} &= \{ (i,j,k)\in\mathbb{Z}^3:\ 0\le i \le M-1, \ 0\le j \le N, \ 0\le k \le Q-1 \}, \\ 
  \mathcal{S}_{\text{s}}^{t} &= \{ (i,j,k)\in\mathbb{Z}^3:\ 0\le i \le M-1, \ 0\le j \le N-1, \ 0\le k \le Q \}.
\end{align}
The staggered nodes over $(\bx,t) \in [0,1]^2\times [0,1]$  are given as
\begin{align}
  \mathcal{G}_{\text{s}}^x &= \left\{  (x_{i-\frac{1}{2}}, y_j, t_k) = \left( i\Delta x, \left(j + \tfrac{1}{2} \right)\Delta y, \left(k + \tfrac{1}{2} \right)\Delta t \right), \ (i,j,k) \in \mathcal{S}_{\text{s}}^{x}   \right\}, \\
  \mathcal{G}_{\text{s}}^y &= \left\{  (x_{i}, y_{j-\frac{1}{2}}, t_k) = \left( \left(i + \tfrac{1}{2} \right)\Delta x, j\Delta y, \left(k + \tfrac{1}{2} \right)\Delta t \right), \ (i,j,k) \in \mathcal{S}_{\text{s}}^{y}    \right\}, \\
  \mathcal{G}_{\text{s}}^t &= \left\{  (x_{i}, y_{j}, t_{k-\frac{1}{2}}) = \left( \left(i + \tfrac{1}{2} \right)\Delta x, \left(j + \tfrac{1}{2}\right)\Delta y, k \Delta t \right), \ (i,j,k) \in \mathcal{S}_{\text{s}}^{t}   \right\}.
\end{align}
The discrete functions defined on the staggered grid $\mathcal{G}_{\text{s}}^{q}, q = x,y,t$ are denoted by $\mathcal{M}(\mathcal{G}_{\text{s}}^{q})$, namely, for $f^{q}\in \mathcal{M}(\mathcal{G}_{\text{s}}^{q}), q = x, y, t$,
\begin{align}
  f^x = \left( f^x_{i-\frac{1}{2},j,k} \right)_{(i,j,k) \in \mathcal{S}_{\text{s}}^{x} }, \quad 
  f^y = \left( f^y_{i,j-\frac{1}{2},k} \right)_{(i,j,k) \in \mathcal{S}_{\text{s}}^{y} }, \quad
  f^t = \left( f^t_{i,j,k-\frac{1}{2}} \right)_{(i,j,k) \in \mathcal{S}_{\text{s}}^{t} }.
\end{align}
We further denote the discrete functions defined on $\mathcal{G}_{\text{s}}^{x} \times \mathcal{G}_{\text{s}}^{y} \times \mathcal{G}_{\text{s}}^{t}$ as
\begin{align*}
  U_{\text{s}} = (\bfm_{\text{s}},\rho_{\text{s}}) = (m_{\text{s}}, n_{\text{s}},\rho_{\text{s}}) = (m_{i-\frac{1}{2},j,k}, n_{i,j-\frac{1}{2},k}, \rho_{i,j,k-\frac{1}{2}})\in \mathcal{D}_{\text{s}}= \mathcal{M}(\mathcal{G}_{\text{s}}^{x}) \times \mathcal{M}(\mathcal{G}_{\text{s}}^{y}) \times \mathcal{M}(\mathcal{G}_{\text{s}}^{t}).
\end{align*}

We also define the index set of centered grid as
\begin{align}
  \mathcal{S}_{\text{c}} = \{ (i,j,k)\in\mathbb{Z}^3:\  0\le i \le M-1,\  0\le j \le N-1,\  0\le k \le Q-1 \}.
\end{align}
Then a centered grid discretization over $(\bx,t) \in [0,1]^2\times [0,1]$ is given as
\begin{align}
  \mathcal{G}_{\text{c}} = \left\{  (x_i, y_j, t_k) = \left( (i+\tfrac{1}{2}) \Delta x, (j+\tfrac{1}{2})\Delta y, (k+\tfrac{1}{2})\Delta t \right),\  (i,j,k) \in \mathcal{S}_{\text{c}}  \right\}.
\end{align}
The discrete functions defined on the centered grid $\mathcal{G}_{\text{c}}$ are denoted by $\mathcal{M}(\mathcal{G}_{\text{c}})$. The UnSyncOT variable $(\bfm,\rho)$ discretized over $\mathcal{G}_{\text{c}} \times \mathcal{G}_{\text{c}} \times \mathcal{G}_{\text{c}}$ is denoted by
\begin{align}
  U_{\text{c}} = (\bfm_{\text{c}},\rho_{\text{c}})  = (m_{\text{c}}, n_{\text{c}},\rho_{\text{c}}) = (m_{ijk}, n_{ijk}, \rho_{ijk}) \in \mathcal{D}_{\text{c}} = \mathcal{M}(\mathcal{G}_{\text{c}}) \times \mathcal{M}(\mathcal{G}_{\text{c}}) \times \mathcal{M}(\mathcal{G}_{\text{c}}).
\end{align}
The variable $H$ discretized over $\mathcal{G}_c$  is denoted by
\begin{align}
    H_\mathrm{c}=(H_{ijk})\in\mathcal{M}(\mathcal{G}_c).
\end{align}

We further introduce a midpoint interpolation operator $\mathcal{I}: \mathcal{D}_{\text{s}} \rightarrow \mathcal{D}_{\text{c}}$. For $U_{\text{s}} = (\bfm_{\text{s}},\rho_{\text{s}}) \in \mathcal{D}_{\text{s}}$, we define $\mathcal{I}(U_{\text{s}}) = (\bfm_{\text{c}},\rho_{\text{c}}) \in \mathcal{D}_{\text{c}}$ as follows:
\begin{align}
  \begin{cases}
  m_{ijk} = \frac{1}{2}( m_{i-\frac{1}{2},j,k} + m_{i+\frac{1}{2},j,k}), \\ 
  n_{ijk} = \frac{1}{2}( n_{i,j-\frac{1}{2},k} + n_{i,j+\frac{1}{2},k}), \\
  \rho_{ijk} = \frac{1}{2}( \rho_{i,j,k-\frac{1}{2}} + \rho_{i,j,k+\frac{1}{2}}),
  \end{cases}
  \quad (i,j,k)\in\mathcal{S}_{\text{c}}.
\end{align}
Then the space-time divergence operator $\div: \mathcal{D}_{\text{s}} \rightarrow \mathcal{M}(\mathcal{G}_{\text{c}}) $ is defined for $U_{\text{s}} = (\bfm_{\text{s}},\rho_{\text{s}}) \in \mathcal{D}_{\text{s}}$:
\begin{align}
  \div(U_{\text{s}})_{ijk} =  \frac{m_{i+\frac{1}{2},j,k} - m_{i-\frac{1}{2},j,k}}{\Delta x}  + 
                              \frac{ n_{i,j+\frac{1}{2},k} - n_{i, j-\frac{1}{2},k} }{\Delta y}  +
                               \frac{\rho_{i,j,k+\frac{1}{2}} - \rho_{i,j,k-\frac{1}{2}} }{\Delta t},
\end{align}
for $(i,j,k)\in\mathcal{S}_{\text{c}}$.

To handle the boundary conditions on the staggered grids, we introduce the linear operator $b$ for $U_{\text{s}} = (\bfm_{\text{s}},\rho_{\text{s}}) \in \mathcal{D}_{\text{s}}$ as
\begin{align}\label{eqn:b_Us}
  b(U_{\text{s}}) = \left( (m_{{\scriptscriptstyle-\frac{1}{2}},j,k}, m_{M-{\scriptscriptstyle\frac{1}{2}},j,k}), (n_{i,-{\scriptscriptstyle\frac{1}{2}},k}, n_{i,N-{\scriptscriptstyle\frac{1}{2}},k}), (\rho_{i,j,-{\scriptscriptstyle\frac{1}{2}}}, \rho_{i,j,Q-{\scriptscriptstyle\frac{1}{2}}})\right)_{(i,j,k)\in\mathcal{S}_{\mathrm{c}}},
\end{align}
and impose the boundary conditions
\begin{align}\label{eqn:boundarycondition}
  b(U_{\text{s}}) = b_0 : = ((\mathbf{0},\mathbf{0}),(\mathbf{0},\mathbf{0}),(\mu_{\text{c}}, \nu_{\text{c}}))\in (\mathbb{R}^{NQ})^2 \times (\mathbb{R}^{MQ})^2 \times (\mathbb{R}^{MN})^2,
\end{align}
where $\mu_{\text{c}}, \nu_{\text{c}} \in \mathbb{R}^{MN}$ are the discretized initial and terminal densities over centered grid $\{ (x_i,y_j) = ( (i+\tfrac{1}{2})\Delta x, (j+\tfrac{1}{2})\Delta y ),\ i=0:M-1, j=0:N-1 \}$.

\subsection{Primal-dual Methods for Monge Unbalanced Synchronized Optimal Transport }\label{subsection:Primal_dual_Monge}

By the change of variable \reff{eqn:change_of_variable}, we define the functional $J$ in \reff{eqn:J_linearlize}. Then the single-space Monge SyncOT formulation \reff{eqn:Monge_UnSyncOT_SingleSpace} can be recast into convex optimization problem over the pair $(\bfm,\rho,H)$,
\begin{align}\label{eqn:SyncOT_momentum}
\min_{(\bfm,\rho, H)\in\mathcal{C}_{\mathrm{WFR}_{\alpha,\beta}}} \mathcal{J}(\bfm,\rho,H) = \int_0^1 \int_{X} J(\bfm_t(\bx),\rho_t(\bx), H_t(\bx)) d\bx dt
\end{align}
where the feasibility set $\mathcal{C}_{\mathrm{WFR}_{\alpha,\beta}}$, with a slight abuse of notation, is defined as
\begin{align}\label{eqn:SyncOT_momentum_constraint}
  \mathcal{C}_{\mathrm{WFR}_{\alpha,\beta}}: = \left\{ (\bfm,\rho, H): \partial_t\rho_t + \alpha\nabla\cdot\bfm_t = \beta H_t, \ \rho_{0,1} = \bar{\rho}_{0,1},\ \mathbf{m}_t\cdot\mathbf{n}|_{\partial X}=0 \right\}.
\end{align}

With the discretization from last section, the Monge UnSyncOT (\ref{eqn:SyncOT_momentum}) is approximated by the following finite-dimensional convex problem over the staggered and centered grids:
\begin{align}\label{eqn:SyncOT_discretized}
  \min_{U_{\text{s}}\in \mathcal{D}_{\text{s}}, H_\mathrm{c}\in\mathcal{M}(\mathcal{G}_{\mathrm{c}})} \mathcal{J}(\mathcal{I}(U_{\text{s}}), H_\mathrm{c}) + \iota_{\mathcal{C}_{\mathrm{WFR},h}}(U_{\text{s}},H_\mathrm{c}),
\end{align}
where, with a slight abuse of notation, we take
\begin{align}\label{eqn:J_discretized}
  \mathcal{J}(U_{\text{c}}, H_{\text{c}}) = \sum_{(i,j,k)\in\mathcal{S}_{\text{c}}} J(m_{ijk}, n_{ijk}, \rho_{ijk}, H_{ijk}), \quad U_{\text{c}} = (\bfm_{\text{c}},\rho_{\text{c}}) \in \mathcal{D}_{\text{c}}, \  H_{\text{c}} \in \mathcal{M}(\mathcal{G}_{\text{c}}),
\end{align}
and $\iota_{\mathcal{C}_{\mathrm{WFR},h}}(U_{\text{s}}, H_{\text{c}} )$ is the indicator function
\begin{align}\label{eqn:C}
  \iota_{\mathcal{C}_{\mathrm{WFR},h}}(U_{\text{s}}, H_\mathrm{c}) = 
  \begin{cases}
     0, \ & \text{if\ } (U_{\text{s}}, H_{\text{c}}) \in \mathcal{C}_{\mathrm{WFR},h}, \\
     +\infty, \ & \text{otherwise}.
  \end{cases}
\end{align}
with $\mathcal{C}_{\mathrm{WFR},h}$ being defined as
\begin{align*}
  \mathcal{C}_{\mathrm{WFR},h} = \{ U_{\text{s}}\in\mathcal{D}_s, H_{\text{c}}\in \mathcal{M}(\mathcal{G}_{\text{c}}): \ \div(U_{\text{s}}) = H_{\text{c}},\  b(U_{\text{s}}) = b_0 \}.
\end{align*}

We adopt the Chambolle-Pock primal-dual algorithm \cite{chambolle2011first} to solve the discretized Monge UnSyncOT (\ref{eqn:SyncOT_discretized}). The Chambolle-Pock algorithm generates a sequence 
\[
(U_\mathrm{s}^{(\ell)}, \hat{U}_\mathrm{s}^{(\ell)}, U_\mathrm{c}^{(\ell)}, H_\mathrm{p}^{(\ell)}, \hat{H}_{\mathrm{p}}^{(\ell)}, H_{\mathrm{d}}^{(\ell)} ) \in \mathcal{D}_\text{s} \times \mathcal{D}_\text{s} \times \mathcal{D}_\text{c} \times \mathcal{M}(\mathcal{G}_{\mathrm{c}})\times \mathcal{M}(\mathcal{G}_{\mathrm{c}})\times \mathcal{M}(\mathcal{G}_{\mathrm{c}}),
\]
for primal variables $(U_{\mathrm{s}},\hat{U}_{\mathrm{s}},H_{\mathrm{p}},\hat{H}_{\mathrm{p}})$ and dual variables $(U_{\mathrm{c}},H_{\mathrm{d}})$, starting from initial 
$(U_\mathrm{s}^{(0)}, \hat{U}_\mathrm{s}^{(0)}, \\ U_\mathrm{c}^{(0)}, H_\mathrm{p}^{(0)}, \hat{H}_{\mathrm{p}}^{(0)}, H_{\mathrm{d}}^{(0)})$ 
by the following iterations
\begin{align}\label{eqn:PD_Iterations_2terms}
  \begin{cases}
    \vspace{0.05in}
    \begin{bmatrix*}[r]
    U_\mathrm{c}^{(\ell+1)}  \\
    H_{\mathrm{d}}^{(\ell+1)} 
    \end{bmatrix*}
    = \text{prox}_{\sigma\mathcal{J}^*} 
    \left(
    \begin{bmatrix*}[r]
    U_\mathrm{c}^{(\ell)}  \\
    H_{\mathrm{d}}^{(\ell)} 
    \end{bmatrix*}
    + \sigma 
    \begin{bmatrix*}[r]
    \mathcal{I}(U_\mathrm{s}^{(\ell)})  \\
    I(H_{\mathrm{p}}^{(\ell)} )
    \end{bmatrix*}
    \right), \\
    \vspace{0.05in}
    \begin{bmatrix*}[r]
    \hat{U}_\mathrm{s}^{(\ell+1)}  \\
    \hat{H}_{\mathrm{p}}^{(\ell+1)} 
    \end{bmatrix*}
    = \text{proj}_{\mathcal{C}_{\mathrm{WFR},h}} \left(
    \begin{bmatrix*}[r]
    \hat{U}_\mathrm{s}^{(\ell)}  \\
    \hat{H}_{\mathrm{p}}^{(\ell)} 
    \end{bmatrix*}
    - \tau 
    \begin{bmatrix*}[r]
    \mathcal{I}^*(U_\mathrm{c}^{(\ell+1)})  \\
    I^*(H_{\mathrm{d}}^{(\ell+1)}) 
    \end{bmatrix*}
    \right), \\
    \begin{bmatrix*}[r]
    U_\mathrm{s}^{(\ell+1)}  \\
    H_{\mathrm{p}}^{(\ell+1)} 
    \end{bmatrix*} = 
    \begin{bmatrix*}[r]
    \hat{U}_\mathrm{s}^{(\ell+1)}  \\
    \hat{H}_{\mathrm{p}}^{(\ell+1)} 
    \end{bmatrix*}
    + \theta \left(
    \begin{bmatrix*}[r]
    \hat{U}_\mathrm{s}^{(\ell+1)}  \\
    \hat{H}_{\mathrm{p}}^{(\ell+1)} 
    \end{bmatrix*}
    - 
    \begin{bmatrix*}[r]
    \hat{U}_\mathrm{s}^{(\ell)}  \\
    \hat{H}_{\mathrm{p}}^{(\ell)} 
    \end{bmatrix*}
    \right),
  \end{cases}
\end{align}
in which $I$ is the identity map, and $(\mathcal{J}^*, \mathcal{I}^*, I^*)$ are the corresponding adjoint operators.
Here $\text{prox}_{\sigma\mathcal{J}^*}$ is calculated as follows
\begin{align}\label{eqn:Moreau}
  \text{prox}_{\sigma\mathcal{J}^*}([U_{\text{c}},H_{\mathrm{d}}]^\mathrm{T}) = [U_{\text{c}},H_{\mathrm{d}}]^\mathrm{T} - \sigma \text{prox}_{\sigma^{-1}\mathcal{J}}(\sigma^{-1}[U_{\text{c}},H_{\mathrm{d}}]^\mathrm{T}).
\end{align}
The operators $\text{prox}_{c\mathcal{J}}$ and $\text{proj}_{\mathcal{C}_{\mathrm{WFR,h}}}$ are calculated according to Lemma \ref{lemma:iteration_proxJ} and Lemma \ref{lemma:iteration_projC} below.

Let $(U_{\text{s}}^*, H_{\mathrm{c}}^*)$ be the solution of (\ref{eqn:SyncOT_discretized}). The convergence $(U_{\text{s}}^{(\ell)}, H_{\mathrm{c}}^{(\ell)}) \rightarrow (U_{\text{s}}^*, H_{\mathrm{c}}^*)$ is guaranteed \cite{chambolle2011first} with $O(N^{-1}_{\mathrm{step}})$ rate of convergence for the primal-dual gap,  provided that $\theta = 1$ and $\tau\sigma \|[\mathcal{I},I]^\mathrm{T}\|^2 < 1$. This has been extended in \cite{banert2023chambolle} to hold when $\theta > \frac{1}{2}$ and $\tau\sigma\|[\mathcal{I},I]^\mathrm{T}\|^2\le \frac{4}{1+2\theta}$. Hereafter, we will fix $\theta = 1$ for the numerical experiments.

As the algorithm (\ref{eqn:PD_Iterations_2terms}) require computing the proximal operators for $\mathcal{J}$ and $\iota_{\mathcal{C}_{\mathrm{WFR},h}}$, in what follows, we show that the discrete cost $\mathcal{J}$ in (\ref{eqn:J_discretized}) and the indicator $\iota_{\mathcal{C}_{\mathrm{WFR},h}}$ defined by (\ref{eqn:C}) are proximal friendly.

\begin{lemma}[$\text{prox}_{c\mathcal{J}}$] \label{lemma:proxJ}
  For the separable discrete cost functional $\mathcal{J}$ defined in (\ref{eqn:J_discretized}) with $J$ given in \reff{eqn:J_linearlize}, the proximal operator $\mathrm{prox}_{c\mathcal{J}}$ can be computed component-wisely,
  \begin{align}
    \mathrm{prox}_{c\mathcal{J}}(U_{\mathrm{c}}, H_\mathrm{c}) = \Big\{\mathrm{prox}_{c J}(m_{ijk}, n_{ijk}, \rho_{ijk}, H_{ijk}) \Big\}_{(i,j,k)\in\mathcal{S}_{\mathrm{c}}}.
  \end{align}
  Here for any $(\bfm,\rho,H)\in \mathbb{R}^2\times\mathbb{R}\times\mathbb{R}$,
  \begin{align}
    \mathrm{prox}_{c J}(\bfm,\rho, H) = 
    \begin{cases}
      \big(\bfm^*(\rho^*), \rho^*, H^*(\rho^*) \big), \quad &\mathrm{if\ } \rho>0, \\
      (\mathbf{0}, 0,0),  \quad  & \mathrm{otherwise},
    \end{cases} 
  \end{align}
  where $\rho^*$ is the unique fixed point over $(0,\infty)$ for $\phi(\tilde{\rho})$:
  \begin{align}\label{eqn:phi}
    \phi(\tilde{\rho}) = \rho +  c \big\langle (2c \alpha\mathbf{A} + \tilde{\rho} \mathbf{I})^{-1}\bfm, \alpha\mathbf{A} (2c \alpha\mathbf{A} + \tilde{\rho} \mathbf{I})^{-1}\bfm \big\rangle + c
    (2c\beta + \tilde{\rho})^{-2}H\beta H,
  \end{align}
  and 
  \begin{align}\label{eqn:m_H_proximal}
   & \bfm^*(\rho^*) = (2c \alpha\mathbf{A} + \rho^* \mathbf{I} )^{-1} \rho^*\bfm,\quad
    H^*(\rho^*)=(2c\beta + \rho^*)^{-1}\rho^*H.
  \end{align}

\end{lemma}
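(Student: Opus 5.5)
The plan is to reduce to a pointwise problem, write down the first-order optimality conditions on the branch $\rho'>0$, and eliminate $(\bfm',H')$ to get a scalar equation for $\rho'$.

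\emph{Separability.} By definition,
\[
\mathrm{prox}_{c\mathcal{J}}(U_{\mathrm{c}},H_{\mathrm{c}})=\operatorname{argmin}_{(U',H')}\ c\,\mathcal{J}(U',H')+\tfrac12\|(U',H')-(U_{\mathrm{c}},H_{\mathrm{c}})\|^2 .
\]
By \reff{eqn:J_discretized}, $\mathcal{J}$ is a sum over $(i,j,k)\in\mathcal{S}_{\mathrm{c}}$ of $J$ evaluated at the local quadruple. The squared Euclidean distance is also a sum of local terms. Hence the objective decouples into independent problems, one per grid node, each of the form
\[
\min_{(\bfm',\rho',H')}\ c\,J(\bfm',\rho',H')+\tfrac12\|\bfm'-\bfm\|^2+\tfrac12(\rho'-\rho)^2+\tfrac12(H'-H)^2 .
\]
Each of these is a strictly convex, lower semicontinuous problem, since $J$ is the perspective-type function in \reff{eqn:J_linearlize}. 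So each has a unique minimizer, and the component-wise formula follows.

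\emph{Interior branch $\rho'>0$.} Here $J$ is smooth, and I will set the partial derivatives to zero.
\begin{itemize}
\item The $\bfm'$-equation is $2c\alpha\mathbf{A}\bfm'/\rho'+\bfm'-\bfm=0$. This gives $\bfm'=(2c\alpha\mathbf{A}+\rho'\mathbf{I})^{-1}\rho'\bfm$, which is well defined because $\mathbf{A}$ is SPD.
\item The $H'$-equation is $2c\beta H'/\rho'+H'-H=0$. This gives $H'=\rho' H/(2c\beta+\rho')$.
\end{itemize}
Together these are \reff{eqn:m_H_proximal}. The $\rho'$-equation is
\[
\rho'=\rho+c\,\alpha\,\rho'^{-2}\|\bfm'\|_{\mathbf{A}}^2+c\,\beta\,\rho'^{-2}H'^2 .
\]
Substituting $\bfm'/\rho'=(2c\alpha\mathbf{A}+\rho'\mathbf{I})^{-1}\bfm$ and $H'/\rho'=(2c\beta+\rho')^{-1}H$ turns it into exactly $\rho'=\phi(\rho')$ with $\phi$ as in \reff{eqn:phi}.

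\emph{Uniqueness of the fixed point.} Diagonalize $\mathbf{A}=Q\,\mathrm{diag}(\lambda_i)Q^{\mathrm T}$ with $\lambda_i>0$, and write $\tilde m=Q^{\mathrm T}\bfm$. Then
\[
\phi(\tilde\rho)=\rho+c\sum_i\frac{\alpha\lambda_i\tilde m_i^2}{(2c\alpha\lambda_i+\tilde\rho)^2}+\frac{c\beta H^2}{(2c\beta+\tilde\rho)^2}.
\]
This is continuous and nonincreasing on $(0,\infty)$. Hence $\psi(\tilde\rho):=\tilde\rho-\phi(\tilde\rho)$ is strictly increasing, and $\psi(\tilde\rho)\to\infty$ as $\tilde\rho\to\infty$. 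If $\rho>0$, then $\psi(0^+)=-\phi(0^+)<0$, so $\psi$ has exactly one zero $\rho^*$ in $(0,\infty)$. At that point all three first-order conditions hold. By strict convexity, the resulting triple is the global minimizer.

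\emph{Boundary case (expected main obstacle).} For the "otherwise" branch, I would check the optimality condition at the origin directly. This means showing $(\bfm,\rho,H)\in c\,\partial J(\mathbf 0,0,0)$. Here $\partial J(\mathbf 0,0,0)$ is computed from the conjugate of the perspective function: it is the set $\{(\mathbf a,b,d): b+\tfrac{1}{4\alpha}\|\mathbf a\|_{\mathbf{A}^{-1}}^2+\tfrac{1}{4\beta}d^2\le 0\}$.

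This step is delicate. When $\rho\le0$ but $(\bfm,H)$ is large, $\psi(0^+)=-\phi(0^+)$ can still be negative, so a positive root may exist. In that situation the minimizer is actually given by the interior formula, not by zero. I would therefore make the case split precise as follows.
\begin{itemize}
\item The zero case is used exactly when $\psi$ has no positive root, which is equivalent to $\phi(0^+)\le0$, i.e. $\rho+\tfrac{1}{4c\alpha}\|\bfm\|_{\mathbf{A}^{-1}}^2+\tfrac{1}{4c\beta}H^2\le0$. In that case the subdifferential criterion above confirms that $(\mathbf 0,0,0)$ is the minimizer.
\item The lemma's stated condition "$\rho>0$" is a sufficient condition for the interior case.
\end{itemize}
I expect the final write-up to state the interior formula whenever a positive fixed point exists. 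That covers $\rho>0$ and matches the usual convention of Papadakis et al.
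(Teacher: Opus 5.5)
Your argument for the branch $\rho>0$ is the paper's argument. You take stationarity in $(\bfm,\rho,H)$, eliminate $\bfm$ and $H$ to get $\tilde\rho=\phi(\tilde\rho)$, and use monotonicity of $\phi$ to get a unique positive root. Your diagonalization and the explicit ``stationary point of a convex function is the global minimizer'' step only make explicit what the paper leaves implicit. Your nonincreasing-$\phi$ argument also covers the case $(\bfm,H)=(\mathbf 0,0)$, which the paper treats separately.

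You depart from the paper on the ``otherwise'' branch, and there you are right and the paper is not. The paper's proof simply asserts that $\rho\le 0$ ``evidently'' forces $\mathrm{prox}_{cJ}=(\mathbf 0,0,0)$. This is false whenever
$\phi(0)=\rho+\frac{1}{4c\alpha}\|\bfm\|_{\bA^{-1}}^2+\frac{1}{4c\beta}H^2>0$.
For instance, take $\rho=0$, $H=0$, $\bfm\neq\mathbf 0$. Moving from the origin to $(s\varepsilon\bfm,\varepsilon,0)$ changes the objective $cJ+\frac12\|\cdot-(\bfm,\rho,H)\|^2$ by $\varepsilon\big(c\alpha s^2\|\bfm\|_{\bA}^2-s\|\bfm\|^2\big)+O(\varepsilon^2)$. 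This is negative for $0<s<\|\bfm\|^2/(c\alpha\|\bfm\|_{\bA}^2)$ and $\varepsilon$ small.

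So the lemma as literally stated cannot be proved. Your argument establishes the corrected dichotomy: the interior formula holds iff $\phi(0)>0$, and the prox is zero iff $\phi(0)\le 0$. This is the same ``positive root'' criterion used by Papadakis et al.

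Your computation of $\partial J(\mathbf 0,0,0)$ is correct, but it is not strictly needed. The minimizer exists, is unique, and lies in $\mathrm{dom}\,J=\{\rho>0\}\cup\{(\mathbf 0,0,0)\}$. A minimizer with $\rho'>0$ must be stationary, hence a positive root of $\psi$. So when no positive root exists, the minimizer is already forced to be the origin.

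The correction also matters in practice. In the primal-dual iterations the prox is evaluated through the Moreau identity at points whose density component can be nonpositive while the momentum is large. The rule as stated in the lemma would return the wrong value there.
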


\begin{proof}
   Let $\text{prox}_{c J}(\bfm, \rho, H) = (\bfm^*, \rho^*, H^*)$. By the definition of the proximal operator,
   \[
   \text{prox}_{c J} (\bfm,\rho, H) = \underset{(\tilde{\bfm}, \tilde{\rho}, \tilde{H})}{\mathrm{argmin}} \ J( \tilde{\bfm}, \tilde{\rho}, \tilde{H} ) + (2c)^{-1}\| (\bfm,\rho, H) - (\tilde{\bfm},\tilde{\rho},\tilde{H}) \|^2,
   \]
and the definition of $J$ in \reff{eqn:J_linearlize}, it is evident that if $\rho\le0$, then $\text{prox}_{c J} (\bfm,\rho,H) = (\mathbf{0},0,0)$. Besides, if $\bfm = \mathbf{0}$, $\rho>0$, and $H=0$, then $\text{prox}_{c J} (\mathbf{0},\rho,0) = (\mathbf{0},\rho,0)$.

On the other hand, if $\rho>0$ and $(\bfm,H) \neq (\mathbf{0},0)$, taking the first order derivatives of the objective function for the proximal operator $\text{prox}_{c J} (\bfm,\rho,H)$, it yields
\begin{align}\label{eqn:proxJ_system}
  \begin{cases}
    2c \tilde{\rho}^{-1}( \alpha \mathbf{A} \tilde{\bfm}) + \tilde{\bfm} - \bfm = 0, \\
    -c \tilde{\rho}^{-2}(\tilde{\bfm}^\mathrm{T} \alpha\mathbf{A} \tilde{\bfm}) - c  \tilde{\rho}^{-2}(\tilde{H}\beta\tilde{H}) +  \tilde{\rho} - \rho = 0,\\
    2c\tilde{\rho}^{-1}(\beta\tilde{H}) + \tilde{H}-H=0.
  \end{cases}
\end{align} 
The first and third equations in (\ref{eqn:proxJ_system}) lead to
\begin{align}\label{eqn:m_tilde}
  \tilde{\bfm} = (2c \alpha\mathbf{A} + \tilde{\rho} \mathbf{I} )^{-1} \tilde{\rho}\bfm, \quad 
  \tilde{H} = (2c\beta + \tilde{\rho})^{-1}\tilde{\rho}H.
\end{align}
Inserting the above equations to the second equation in (\ref{eqn:proxJ_system}) gives
\begin{align}\label{eqn:rho_tilde}
\tilde{\rho}& = \rho + c \frac{\langle \tilde{\bfm}, \alpha\mathbf{A}\tilde{\bfm} \rangle}{\tilde{\rho}^2} + c\frac{\tilde{H}\beta\tilde{H}}{\tilde{\rho}^2} \nonumber\\
&= \rho + c \Big\langle (2c \alpha\mathbf{A} + \tilde{\rho} \mathbf{I})^{-1}\bfm, \alpha\mathbf{A} (2c \alpha\mathbf{A} + \tilde{\rho} \mathbf{I})^{-1}\bfm \Big\rangle  + c
    (2c\beta + \tilde{\rho})^{-2}H\beta H.
\end{align}
Let $\phi(\tilde{\rho})$ be the right hand side of the above equation, we assert that $\phi'(\tilde{\rho})<0$ for $(\bfm,H)\neq (\mathbf{0},0)$. Indeed, by noting $\mathbf{A}$ is positive definite, and taking the derivative, we have 
\begin{align*}
  \phi'(\tilde{\rho}) &= c \Big\langle 2\alpha\mathbf{A}(2c \alpha\mathbf{A} + \tilde{\rho}\mathbf{I})^{-1}\bfm, \ \frac{d}{d\tilde{\rho}}(2c \alpha\mathbf{A} + \tilde{\rho}\mathbf{I})^{-1}\bfm \Big\rangle  + \frac{d}{d\tilde{\rho}}c
    (2c\beta + \tilde{\rho})^{-2}H\beta H \\
  &= -c \Big\langle 2\alpha\mathbf{A}(2c \alpha\mathbf{A} + \tilde{\rho}\mathbf{I})^{-1}\bfm, \ (2c \alpha\mathbf{A} + \tilde{\rho}\mathbf{I})^{-2}\bfm \Big\rangle-2c(2c\beta + \tilde{
  \rho})^{-3}H\beta H <0,
\end{align*}
for $(\bfm,H) \neq (\mathbf{0},0)$, $\alpha,\beta > 0$. The left hand side of the equation (\ref{eqn:rho_tilde}) is a linearly increasing function over the interval $\tilde{\rho}\in (0,\infty)$, beginning its growth from $0$; in contrast, the right hand side is a strictly decreasing function over the same interval, starting from $\phi(0)=\rho+\frac{1}{4c}\langle \bfm, (\alpha A)^{-1}\bfm \rangle + \frac{1}{4c}H\beta^{-1}H > 0 $, therefore there exists unique solution $\tilde{\rho} = \rho^*$ over $(0,\infty)$. Inserting the unique $\rho^*$ back to (\ref{eqn:m_tilde}) leads to the result (\ref{eqn:m_H_proximal}).
\end{proof}

Note that Lemma \ref{lemma:proxJ} provides a natural fixed point iteration algorithm to compute $\rho^*$. We summarize it in the following lemma.
\begin{lemma}\label{lemma:iteration_proxJ}
Taking any initial guess $\tilde{\rho}^{(0)} \in (0, \infty)$, the following fixed point iteration
\begin{align}
  &\tilde{\rho}^{(\ell+1)} = \rho + c \Big\langle \big(2c \alpha\mathbf{A} + \tilde{\rho}^{(\ell)} \mathbf{I} \big)^{-1}\bfm, \alpha\mathbf{A} \big(2c \alpha\mathbf{A} + \tilde{\rho}^{(\ell)} \mathbf{I} \big)^{-1}\bfm \Big\rangle + c
    \left(2c\beta + \tilde{\rho}^{(\ell)}\right)^{-2}H\beta H, 
\end{align}
for $l=0,1,2,\cdots$, is convergent for sufficiently large $c$.

\end{lemma}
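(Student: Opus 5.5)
The plan is to prove convergence by the Banach contraction principle. We show that $\phi$ from Lemma \ref{lemma:proxJ} maps a suitable closed interval $[\rho_{\min},\rho_{\max}]\subset(0,\infty)$ into itself, and that $|\phi'|\le L<1$ on it once $c$ is large. The iteration is $\tilde\rho^{(\ell+1)}=\phi(\tilde\rho^{(\ell)})$. By Lemma \ref{lemma:proxJ} its unique positive fixed point is $\rho^*$, so contraction immediately gives $\tilde\rho^{(\ell)}\to\rho^*$ geometrically.

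The first step is to diagonalize $\mathbf{A}=Q\,\mathrm{diag}(a_1,\dots,a_m)Q^{\mathrm T}$ with $a_k>0$, and to write $\hat{\bfm}=Q^{\mathrm T}\bfm$. Then
\begin{align*}
\phi(\tilde\rho)=\rho+\sum_k \frac{c\,\alpha a_k\hat m_k^2}{(2c\alpha a_k+\tilde\rho)^2}+\frac{c\,\beta H^2}{(2c\beta+\tilde\rho)^2}.
\end{align*}
This scalar form makes $\phi$ explicit. It is positive and decreasing on $[0,\infty)$, and
\begin{align*}
|\phi'(\tilde\rho)|=\sum_k\frac{2c\alpha a_k\hat m_k^2}{(2c\alpha a_k+\tilde\rho)^3}+\frac{2c\beta H^2}{(2c\beta+\tilde\rho)^3}.
\end{align*}

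For the invariant interval, monotonicity gives $\phi(\tilde\rho)\ge\phi(\infty)=\rho>0$ and $\phi(\tilde\rho)\le\phi(0)=\rho+\frac{1}{4c}\big(\langle\bfm,(\alpha\mathbf{A})^{-1}\bfm\rangle+\beta^{-1}H^2\big)$. So $\phi$ maps $(0,\infty)$ into $I:=[\rho,\phi(0)]$. Hence after one step every iterate lies in the compact interval $I$, whatever the initial guess $\tilde\rho^{(0)}>0$. On $I$ we have $\tilde\rho\ge\rho>0$, and bounding each denominator below by $(2c\lambda)^2(2c\lambda+\rho)$, with $\lambda$ the relevant coefficient, gives
\begin{align*}
\sup_{I}|\phi'|\le \frac{1}{2c}\Big(\sum_k\frac{\hat m_k^2}{\alpha a_k\,(2c\alpha a_k+\rho)}+\frac{H^2}{\beta(2c\beta+\rho)}\Big)=O(c^{-2}).
\end{align*}
Even the cruder bound $|\phi'|\le\frac{1}{4c^2}\big(\sum_k\hat m_k^2/(\alpha a_k)^2+H^2/\beta^2\big)$ tends to $0$. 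So for $c$ large, $L:=\sup_I|\phi'|<1$, and the mean value theorem makes $\phi$ an $L$-contraction on $I$.

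The main obstacle is the meaning of "sufficiently large $c$". In the algorithm, $c=\sigma^{-1}$ and the inputs $(\bfm,\rho,H)$ are rescaled by $\sigma^{-1}$ as well (see \eqref{eqn:Moreau}), so the threshold depends on the data. I will state the threshold explicitly as the $c$ for which the bound above is $<1$. The regime $\rho\le0$ in the input is handled by Lemma \ref{lemma:proxJ}, since the prox is then $(\mathbf{0},0,0)$ and no iteration is needed. If $\rho$ is positive but tiny, the crude $c^{-2}$ bound, which does not use $\rho$, still applies, so the argument needs no lower bound on $\rho$ beyond positivity.
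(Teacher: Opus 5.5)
Your proof is correct and follows essentially the paper's route. The paper observes that $\phi$ is decreasing and strictly convex, so $|\phi'(\tilde\rho)|\le|\phi'(0)|=\frac{1}{4c^2}\big(\bfm^{\mathrm T}(\alpha\mathbf{A})^{-2}\bfm+\beta^{-2}H^2\big)$, which is exactly your ``cruder bound,'' and concludes that $\phi$ is a contraction for large $c$. Your termwise estimate on the invariant interval $[\rho,\phi(0)]$ differs only in making explicit the Banach fixed-point step and the identification of the limit with $\rho^*$, which the paper leaves implicit.
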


\begin{proof}
Note that $\phi(\cdot)$ is monotonically decreasing and strictly convex, so we have that $\phi'(0) < \phi'(\tilde{\rho}) < 0$ for any $\tilde{\rho}^{(0)} \in (0, \infty)$. On the other hand, since
\[
  \phi'(0) = -\frac{1}{4c^2}\bfm^\mathrm{T} (\alpha\mathbf{A})^{-2}\bfm-\frac{1}{4c^2}H\beta^{-2}H,
\]
one can take sufficiently large $c$ such that $\phi'(0)>-1$. Then the condition $-1 < \phi'(\tilde{\rho}) \le 0$ guarantees that the fixed point iteration is always convergent.
\end{proof}

The computation of the proximal operator for $\iota_{\mathcal{C}_{\mathrm{WFR},h}}$ follows the results in \cite{papadakis2014optimal, cang2025synchronized}. Note that $\text{prox}_{\iota_{\mathcal{C}_{\mathrm{WFR},h}}}= \text{proj}_{\mathcal{C}_{\mathrm{WFR},h}}$ with $\mathcal{C}_{\mathrm{WFR},h}$ defined in (\ref{eqn:C}). Taking a similar notation as in \cite{papadakis2014optimal,cang2025synchronized},
\[
  A( U_{\text{s}}, H_{\text{c}}) =(\alpha\text{div}(U_{\text{s}})- \beta H_{\text{c}}, b(U_{\text{s}})), \quad y = (\mathbf{0}, b_0),
\]
then the convex set $\mathcal{C}_{\mathrm{WFR},h}$ can be recast into
\begin{align}\label{eqn:C02}
  \mathcal{C}_{\mathrm{WFR},h} = \{ (U_{\text{s}}, H_{\text{c}}) = (\bfm_{\text{s}},\rho_{\text{s}}, H_{\text{c}}   ) \in\mathcal{D}_s \times \mathcal{M}(\mathcal{G}_{\mathrm{c}}): \ A (U_{\text{s}}, H_{\text{c}})= y \}.
\end{align}
The projection $\text{proj}_{\mathcal{C}_{\mathrm{WFR},h}}$ can be computed directly by the following formula
\[
  \text{proj}_{\mathcal{C}_{\mathrm{WFR},h}}( U_{\text{s}}, H_{\text{c}} ) = (U_{\text{s}}, H_{\text{c}}) - A^*(AA^*)^{-1}(A(U_{\text{s}}, H_{\text{c}}) - y). 
\]
We summarize it in the following lemma \cite{papadakis2014optimal}.

\begin{lemma}[$\text{proj}_{\mathcal{C}_{\mathrm{WFR},h}}$]\label{lemma:iteration_projC}
  The proximal operator $\mathrm{proj}_{\mathcal{C}_{\mathrm{WFR},h}}$ can be computed by 
\begin{align}\label{eqn:proj_update}
 ( U_{\text{s}}^{(\ell+1)}, H_{\text{c}}^{(\ell+1)}) = \mathrm{proj}_{\mathcal{C}_{\mathrm{WFR},h}}( U_{\text{s}}^{(\ell)}, H_{\text{c}}^{(\ell)} ) = (U_{\text{s}}^{(\ell)}, H_{\text{c}}^{(\ell)}) - A^*(AA^*)^{-1}(A(U_{\text{s}}^{(\ell)}, H_{\text{c}}^{(\ell)}) -y). 
\end{align}
Equivalently, the iteration can be performed by firstly solving the inhomogeneous Helmholtz equation
\begin{align}\label{poissonSim}
   \left( \beta^2 I - \alpha^2 (D_x^2 + D_y^2 + D_t^2) \right) s_{ijk} = \alpha \mathrm{div}(U^{(\ell)}_{\text{s}})- \beta H^{(\ell)}_\mathrm{c}, \quad s_{ijk} \in \mathcal{M}(\mathcal{G}_{\text{c}})
\end{align}
with homogeneous Neumann boundary condition where $D_q^2, q=x,y,t$ represents the 2nd-order central difference along $q$-direction. This can be achieved by the fast discrete cosine transform in $O(MNQ\log(MNQ))$ operations. Then the update $(U_{\text{s}}^{(\ell+1)},H_{\text{c}}^{(\ell+1)}) \leftarrow (U_{\text{s}}^{(\ell)},H_{\text{c}}^{(\ell)})$ is achieved by
\begin{align}\label{updateHelmholtz}
\begin{cases}
  \vspace{0.1in}
  m^{(\ell+1)}_{i-{\scriptscriptstyle\frac{1}{2}},j,k} &= m^{(\ell)}_{i-{\scriptscriptstyle\frac{1}{2}},j,k} + \frac{\alpha}{\Delta x} (s_{i,j,k} - s_{i-1,j,k}), \quad (i,j,k)\in \mathcal{S}_{\text{s}}^{x}, \ i \neq 0, M ; \\ \vspace{0.1in}
  n^{(\ell+1)}_{i, j-{\scriptscriptstyle\frac{1}{2}},k} &= n^{(\ell)}_{i, j-{\scriptscriptstyle\frac{1}{2}},k} + \frac{\alpha}{\Delta y} (s_{i,j,k} - s_{i,j-1,k}), \quad (i,j,k)\in \mathcal{S}_{\text{s}}^{y}, \ j \neq 0, N ; \\ 
  \vspace{0.1in}
  \rho^{(\ell+1)}_{i, j, k-{\scriptscriptstyle\frac{1}{2}} } &= \ \rho^{(\ell)}_{i, j, k-{\scriptscriptstyle\frac{1}{2}} } + \frac{\alpha}{\Delta t} (s_{i,j,k} - s_{i,j,k-1}), \quad (i,j,k)\in \mathcal{S}_{\text{s}}^{t}, \ k \neq 0, Q ;
  \\  
  H^{(\ell+1)}_{i, j, k} &= H^{(\ell)}_{i, j, k} +  \beta s_{i,j,k}, \quad (i,j,k)\in \mathcal{S}_{\text{c}}.
\end{cases}  
\end{align}

\end{lemma}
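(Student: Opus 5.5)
The plan is to write the projection onto the affine set $\mathcal{C}_{\mathrm{WFR},h}=\{A(U_{\text{s}},H_{\text{c}})=y\}$ in closed form, then show that the normal equation $AA^*\lambda = A(U_{\text{s}}^{(\ell)},H_{\text{c}}^{(\ell)})-y$ is the discrete Helmholtz problem (\ref{poissonSim}), and that applying $A^*$ gives the update (\ref{updateHelmholtz}). Formula (\ref{eqn:proj_update}) is the standard Lagrange-multiplier solution of
\[
\min_{Z} \tfrac12\|Z-Z^{(\ell)}\|^2 \quad \text{s.t. } AZ=y,
\]
with $Z=(U_{\text{s}},H_{\text{c}})$ and $Z^{(\ell)}=(U_{\text{s}}^{(\ell)},H_{\text{c}}^{(\ell)})$. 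Stationarity gives $Z=Z^{(\ell)}-A^*\lambda$, and feasibility gives $AA^*\lambda=AZ^{(\ell)}-y$. This needs $AA^*$ invertible on the range, i.e.\ $A$ surjective, which holds because the $H$-column already contributes $-\beta I$.

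To make the normal equation explicit, I would eliminate the boundary constraint first, following \cite{papadakis2014optimal}. The boundary block $b(U_{\text{s}})=b_0$ only pins the boundary staggered values. So I restrict $U_{\text{s}}$ to its interior components, with boundary values fixed at $b_0$, and fold those fixed values into the right-hand side. The remaining operator is $\mathcal{A}(U,H)=\alpha\,\mathrm{div}(U)-\beta H$, acting on interior unknowns.

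Next I compute the adjoint of the interior divergence. Summation by parts on the staggered grid gives $\mathrm{div}^*=-\nabla_h$, where $\nabla_h s$ is the forward difference $(s_{i,j,k}-s_{i-1,j,k})/\Delta x$ placed at $x_{i-1/2}$, and similarly in $y$ and $t$. Because boundary faces are excluded, the composition $\mathrm{div}\,\nabla_h$ is exactly the standard Neumann discrete Laplacian $D_x^2+D_y^2+D_t^2$. Hence
\[
\mathcal{A}\mathcal{A}^* = \alpha^2\,\mathrm{div}(-\nabla_h) + \beta^2 I = \beta^2 I-\alpha^2(D_x^2+D_y^2+D_t^2),
\]
which is the operator in (\ref{poissonSim}). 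Under the identification $s=-\lambda$, the right-hand side is $\alpha\,\mathrm{div}(U^{(\ell)}_{\text{s}})-\beta H^{(\ell)}_{\text{c}}$, evaluated with the boundary values set to $b_0$.

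It remains to expand $Z=Z^{(\ell)}-\mathcal{A}^*\lambda=Z^{(\ell)}+\mathcal{A}^*s$ componentwise, using $\mathcal{A}^* s=(-\alpha\nabla_h s,\,-\beta s)$, and then reconcile the signs with (\ref{updateHelmholtz}). This sign and convention bookkeeping is the step I expect to need the most care. Which sign is correct depends on whether $s$ solves the equation with right-hand side $+(\alpha\,\mathrm{div}U-\beta H)$ or its negative, and I would fix it by checking that the updated pair satisfies $\alpha\,\mathrm{div}(U^{(\ell+1)})=\beta H^{(\ell+1)}$. The update acts only on interior indices, so the boundary values stay at $b_0$ as required.

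Finally, the cost claim follows because the Neumann Laplacian on a tensor grid is diagonalized by the DCT-II in each direction. The Helmholtz operator therefore has eigenvalues
\[
\beta^2+\alpha^2\sum_q \frac{4}{\Delta q^2}\sin^2\!\Big(\frac{\pi k_q}{2N_q}\Big)>0,
\]
where $N_q\in\{M,N,Q\}$ is the number of cells along $q=x,y,t$. These are all positive, so the system is uniquely solvable even at zero frequency, thanks to $\beta>0$. Solving it with forward and inverse fast DCTs costs $O(MNQ\log(MNQ))$.
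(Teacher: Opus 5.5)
Your argument is correct, and it reaches the result by a somewhat different route from the paper's. The paper does not eliminate the boundary faces. It keeps the rows $b(U_{\text{s}})$ inside $A$ and rescales them by $\pm\alpha/\Delta x$, $\pm\alpha/\Delta y$, $\pm\alpha/\Delta t$. This gives $A^*s=-(\alpha\nabla_h s,\beta s)$ a single uniform stencil on all faces, boundary faces included. The multiplier $s$ lives on the extended index set $i=-1{:}M$, $j=-1{:}N$, $k=-1{:}Q$. The interior rows of $AA^*s=A(U_{\text{s}}^{(\ell)},H_{\text{c}}^{(\ell)})-y$ then give the Helmholtz stencil with ghost values. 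The boundary rows, whose right-hand side is taken to be zero, give $s_{0,j,k}=s_{-1,j,k}$, $s_{M,j,k}=s_{M-1,j,k}$, and so on, which is the homogeneous Neumann condition. The update is $(U_{\text{s}}^{(\ell+1)},H_{\text{c}}^{(\ell+1)})=(U_{\text{s}}^{(\ell)},H_{\text{c}}^{(\ell)})-A^*s$.

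Your elimination of the pinned boundary faces produces the Neumann Laplacian directly from $\mathrm{div}_\circ\mathrm{div}_\circ^*$, without ghost cells. It is also slightly more general. The paper's zero boundary residual tacitly assumes that $b(U_{\text{s}}^{(\ell)})=b_0$ already holds. In the Chambolle--Pock step, however, the point handed to the projection is $\hat U_{\text{s}}-\tau\mathcal{I}^*U_{\text{c}}$. This generally has nonzero boundary-face entries, because $\mathcal{I}$ averages boundary faces into the first and last cells. Your version resets the boundary faces to $b_0$ and uses $b_0$ in the right-hand side, which is what the exact projection computes in that case. Your observation that $\beta>0$ makes every DCT eigenvalue strictly positive also supplies the unique solvability that the paper only asserts.

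One correction to your sign bookkeeping: the identification should be $s=\lambda$, not $s=-\lambda$. The normal equation is $\mathcal{A}\mathcal{A}^*\lambda=\mathcal{A}Z^{(\ell)}-y$, with $\mathcal{A}\mathcal{A}^*=\beta^2I-\alpha^2(D_x^2+D_y^2+D_t^2)$. So $\lambda$ itself solves (\ref{poissonSim}) with right-hand side $+(\alpha\,\mathrm{div}(U^{(\ell)}_{\text{s}})-\beta H^{(\ell)}_{\text{c}})$. Then $Z^{(\ell)}-\mathcal{A}^*\lambda=(U^{(\ell)}_{\text{s}}+\alpha\nabla_h\lambda,\,H^{(\ell)}_{\text{c}}+\beta\lambda)$, which is exactly (\ref{updateHelmholtz}). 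Taken literally, your pairing of that right-hand side with $Z=Z^{(\ell)}+\mathcal{A}^*s$ would double the residual $\alpha\,\mathrm{div}(U_{\text{s}})-\beta H_{\text{c}}$ rather than remove it. The feasibility check you planned would flag this immediately.
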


\begin{proof}
Given 
\begin{align}\label{eqn:feasibility01}
  A( U_{\text{s}}, H_{\text{c}}) =(\alpha \text{div}(U_{\text{s}}) - \beta H_{\text{c}}, b(U_{\text{s}})) = y :=  (\mathbf{0}, b_0) 
  = (\mathbf{0},(\mathbf{0},\mathbf{0},\mathbf{0},\mathbf{0}, \mu_{\mathrm{c}},\nu_{\mathrm{c}})),
\end{align}
we revise the definition of $A( U_{\text{s}}, H_{\text{c}})$ by modifying the boundary operator $b(U_{\mathrm{s}})$ in (\ref{eqn:b_Us}) as below
\begin{align*}
    b(U_{\mathrm{s}}) = 
    \Big(\tfrac{\alpha}{\Delta x} m_{ {\scriptscriptstyle -\frac{1}{2}} , j,k}, 
    -\tfrac{\alpha}{\Delta x} m_{ {\scriptscriptstyle M-\frac{1}{2}} , j,k},
    \tfrac{\alpha}{\Delta y} n_{ i,{\scriptscriptstyle -\frac{1}{2}} , k},
    -\tfrac{\alpha}{\Delta y} n_{ i,{\scriptscriptstyle N-\frac{1}{2}} , k},
    \tfrac{\alpha}{\Delta t} \rho_{ i,j,{\scriptscriptstyle -\frac{1}{2}}},  
    -\tfrac{\alpha}{\Delta t} \rho_{ i,j,{\scriptscriptstyle Q-\frac{1}{2}}}
    \Big)_{(i,j,k)\in \mathcal{S}_{\mathrm{c}}}.
\end{align*}
We also modify the boundary tensor $b_0$ in (\ref{eqn:boundarycondition}) as below
\begin{align*}
    b_0 = \left(\mathbf{0},\mathbf{0},\mathbf{0},\mathbf{0}, \tfrac{\alpha}{\Delta t}\mu_{\mathrm{c}},-\tfrac{\alpha}{\Delta t}\nu_{\mathrm{c}}\right), \quad y = (\mathbf{0},b_0).
\end{align*}
For simplicity, we will continue to use the same notation $A, b, b_0,y$ for the revised operators. Note that under the revised definitions, the boundary condition
\begin{align}
    b(U_\mathrm{s}) =  b_0,
\end{align}
remains unchanged from original form given in equation (\ref{eqn:boundarycondition}).

Assume that we have
\[
AA^* s = A(U_{\mathrm{s}},H_{\mathrm{c}}) - y, \quad s = (s_{ijk}), \ i=-1:M, \ j = -1:N, \ k = -1:Q.
\]
Now we derive the explicit form of $A^*$. Given that
\begin{align*}
    \Big\langle s, A(U_\mathrm{s},H_{\mathrm{c}}) \Big\rangle
    =&
    \sum_{(i,j,k)\in\mathcal{S}_{\mathrm{c}}}
    s_{ijk}\Big(\alpha\mathrm{div}(U_{\mathrm{s}})_{ijk} - \beta H_{ijk}\Big) + s_{-1,j,k} \cdot \tfrac{\alpha}{\Delta x} m_{ {\scriptscriptstyle -\frac{1}{2}} , j,k}
    - s_{M,j,k} \cdot \tfrac{\alpha}{\Delta x} m_{ {\scriptscriptstyle M-\frac{1}{2}} , j,k} \\
    &+ s_{i,-1,k} \cdot \tfrac{\alpha}{\Delta y} n_{ i, {\scriptscriptstyle -\frac{1}{2}} , k}
    - s_{i,N,k} \cdot \tfrac{\alpha}{\Delta y} n_{ i, {\scriptscriptstyle N-\frac{1}{2}},k}
    + s_{i,j,-1} \cdot \tfrac{\alpha}{\Delta t} \rho_{ i, j,{\scriptscriptstyle -\frac{1}{2}}}
    - s_{i,j,Q} \cdot \tfrac{\alpha}{\Delta t} \rho_{ i, j,  {\scriptscriptstyle Q-\frac{1}{2}}} \\
    =&\ 
    \alpha\sum_{(i,j,k)\in\mathcal{S}^x_{\mathrm{s}}}
     -\frac{s_{i,j,k}-s_{i-1,j,k}}{\Delta x}m_{{\scriptscriptstyle i-\frac{1}{2}},j,k}
    +\alpha\sum_{(i,j,k)\in\mathcal{S}^y_{\mathrm{s}}}
    -\frac{s_{i,j,k}-s_{i,j-1,k}}{\Delta y}n_{i,{\scriptscriptstyle j-\frac{1}{2}},k} \\
    &+\alpha\sum_{(i,j,k)\in\mathcal{S}^t_{\mathrm{s}}}
    -\frac{s_{i,j,k}-s_{i,j,k-1}}{\Delta t}m_{i,j,{\scriptscriptstyle k-\frac{1}{2}}}
    +\beta\sum_{(i,j,k)\in\mathcal{S}_{\mathrm{c}}} - s_{ijk}H_{ijk}   \\
    =& \ \Big\langle A^*s, (U_\mathrm{s},H_{\mathrm{c}}) \Big\rangle,
\end{align*}
we obtain the adjoint operator $A^*$
\begin{align}\label{eqn:Astar}
    A^*s = -\bigg[\alpha\Big(\frac{s_{i,j,k}-s_{i-1,j,k}}{\Delta x}\Big)_{\mathcal{S}^x_{\mathrm{s}}}, 
    \alpha\Big(\frac{s_{i,j,k}-s_{i,j-1,k}}{\Delta y}\Big)_{\mathcal{S}^y_{\mathrm{s}}},
    \alpha\Big(\frac{s_{i,j,k}-s_{i,j,k-1}}{\Delta t}\Big)_{\mathcal{S}^t_{\mathrm{s}}}, \beta\big(s_{ijk}\big)_{\mathcal{S}_{\mathrm{c}}} \bigg].
\end{align}
Then it follows that
\begin{align*}
    AA^*s = \Big[& \left(\beta^2 I - \alpha^2(D_x^2+D_y^2+D_t^2)\right)s_{ijk},
    -\frac{s_{0,j,k}-s_{-1,j,k}}{\alpha^{-2}\Delta x^2},
    -\frac{s_{M,j,k}-s_{M-1,j,k}}{\alpha^{-2}\Delta x^2}, \\
    &-\frac{s_{i,0,k}-s_{i,-1,k}}{\alpha^{-2} \Delta y^2},
    -\frac{s_{i,N,k}-s_{i,N-1,k}}{\alpha^{-2} \Delta y^2},
    - \frac{s_{i,j,0}-s_{i,j,-1}}{\alpha^{-2}\Delta t^2},
    - \frac{s_{i,j,Q}-s_{i,j,Q-1}}{ \alpha^{-2}\Delta t^2} \Big]_{(i,j,k)\in\mathcal{S}_{\mathrm{c}}},
\end{align*}
where the 2nd-order central difference $D_x^2$ (similarly $D_y^2, D_t^2$) is defined as 
\begin{align*}
    D_x^2 = \frac{s_{i-1,j,k}-2s_{i,j,k}+s_{i+1,j,k}}{\Delta x^2}.
\end{align*}

Now let $AA^*s = A(U_{\mathrm{s}}^{(\ell)},H_{\mathrm{c}}^{(\ell)})-y$, we have
\begin{align}\label{eqn:}
    \begin{cases}
        \left( \beta^2 I- \alpha^2 (D_x^2+D_y^2+D_t^2) \right) s_{ijk} = \alpha\mathrm{div}(U_{\mathrm{s}}^{(\ell)})_{ijk} - \beta H^{(\ell)}_{ijk}, \quad (i,j,k)\in\mathcal{S}_{\mathrm{c}},\\
        s_{0,j,k} = s_{-1,j,k}, \quad 
        s_{M,j,k} = s_{M-1,j,k}, \\
        s_{i,0,k} = s_{i,-1,k}, \quad
        s_{i,N,k} = s_{i,N-1,k}, \\
        s_{i,j,0} = s_{i,j,-1}, \quad
        s_{i,j,Q} = s_{i,j,Q-1}.
    \end{cases}
\end{align}
After solving for $s$, we get the update $(U_{\mathrm{s}}^{(\ell+1)}, H_{\mathrm{c}}^{(\ell+1)}) = (U_{\mathrm{s}}^{(\ell)}, H_{\mathrm{c}}^{(\ell)}) - A^*s$. By the definition of $U_{\mathrm{s}}^{(\ell)} = (m_{\mathrm{s}}^{(\ell)},n_{\mathrm{s}}^{(\ell)},\rho_{\mathrm{s}}^{(\ell)})$ and the definition of $A^*s$ in (\ref{eqn:Astar}), we have that
\begin{align*}
\begin{cases}
  \vspace{0.1in}
  m^{(\ell+1)}_{i-{\scriptscriptstyle\frac{1}{2}},j,k} &= m^{(\ell)}_{i-{\scriptscriptstyle\frac{1}{2}},j,k} + \frac{\alpha}{\Delta x} (s_{i,j,k} - s_{i-1,j,k}), \quad (i,j,k)\in \mathcal{S}_{\text{s}}^{x}, \ i \neq 0, M ; \\ \vspace{0.1in}
  n^{(\ell+1)}_{i, j-{\scriptscriptstyle\frac{1}{2}},k} &= n^{(\ell)}_{i, j-{\scriptscriptstyle\frac{1}{2}},k} + \frac{\alpha}{\Delta y} (s_{i,j,k} - s_{i,j-1,k}), \quad (i,j,k)\in \mathcal{S}_{\text{s}}^{y}, \ j \neq 0, N ; \\ 
  \vspace{0.1in}
  \rho^{(\ell+1)}_{i, j, k-{\scriptscriptstyle\frac{1}{2}} } &= \ \rho^{(\ell)}_{i, j, k-{\scriptscriptstyle\frac{1}{2}} } + \frac{\alpha}{\Delta t} (s_{i,j,k} - s_{i,j,k-1}), \quad (i,j,k)\in \mathcal{S}_{\text{s}}^{t}, \ k \neq 0, Q ;
  \\  
  H^{(\ell+1)}_{i, j, k} &= H^{(\ell)}_{i, j, k} +  \beta s_{i,j,k}, \quad (i,j,k)\in \mathcal{S}_{\text{c}}.
\end{cases}  
\end{align*}
and at the boundary points, we always have
\begin{align*}
    &\Big[
    \tfrac{\alpha}{\Delta x} m^{(\ell)}_{ {\scriptscriptstyle -\frac{1}{2}} , j,k}, 
    -\tfrac{\alpha}{\Delta x} m^{(\ell)}_{ {\scriptscriptstyle M-\frac{1}{2}} , j,k},
    \tfrac{\alpha}{\Delta y} n^{(\ell)}_{ i,{\scriptscriptstyle -\frac{1}{2}} , k},
    -\tfrac{\alpha}{\Delta y} n^{(\ell)}_{ i,{\scriptscriptstyle N-\frac{1}{2}} , k},
    \tfrac{\alpha}{\Delta t} \rho^{(\ell)}_{ i,j,{\scriptscriptstyle -\frac{1}{2}}},  
    -\tfrac{\alpha}{\Delta t} \rho^{(\ell)}_{ i,j,{\scriptscriptstyle Q-\frac{1}{2}}}
    \Big]_{(i,j,k)\in\mathcal{S}_{\mathrm{c}}} \\
    =&
    \Big[0_{jk},0_{jk},0_{ik},0_{ik},\tfrac{\alpha}{\Delta t}\mu_{ij},-\tfrac{\alpha}{\Delta t}\nu_{ij}\Big]_{(i,j,k)\in\mathcal{S}_{\mathrm{c}}},
\end{align*}
namely,
\begin{align*}
    m^{(\ell)}_{ {\scriptscriptstyle -\frac{1}{2}} , j,k} = m^{(\ell)}_{ M{\scriptscriptstyle -\frac{1}{2}} , j,k} = 0,
    \ 
    n^{(\ell)}_{ i, {\scriptscriptstyle -\frac{1}{2}},k} = n^{(\ell)}_{ i, N{\scriptscriptstyle -\frac{1}{2}},k} = 0,
    \ 
    \rho^{(\ell)}_{ i,j,{\scriptscriptstyle -\frac{1}{2}}}
    = \mu_{ij},
    \ 
    \rho^{(\ell)}_{ i,j,Q{\scriptscriptstyle -\frac{1}{2}}}
    = \nu_{ij},
    \quad \forall\ l = 0,1,2,\cdots.
\end{align*}
Finally, we get the desired update for $(U_{\mathrm{s}}^{(\ell)}, H_{\mathrm{c}}^{(\ell)})$.
\end{proof}

\subsection{Primal-dual Methods for Kantorovich Unbalanced Synchronized Optimal Transport }\label{subsection:Primal_dual_Kantorovich}

In this section, we will take the approximate Kantorovich UnSyncOT formulation \reff{eqn:KantorovichSynOT_approximate} for the numerical implementation. In such way, the secondary action will be simply be a sum of HK distances, which can be effectively solved by the Sinkhorn algorithm \cite{cuturi2013sinkhorn}, instead of solving for $\Phi$ in \reff{eqn:KantorovichUnSynOT_onespace} at each iteration.

Using the discretization in Section \ref{subsection:Staggered}, the approximate Kantorovich UnSyncOT problem \reff{eqn:KantorovichSynOT_approximate} is approximated by the following finite-dimensional convex problem over the staggered and centered grids:
\begin{align}\label{eqn:SyncOT_discretized_Kantorovich}
  \min_{U_{\text{s}}\in \mathcal{D}_{\text{s}}, H_\mathrm{c}\in\mathcal{M}(\mathcal{G}_{\mathrm{c}})} \mathcal{J}(\mathcal{I}(U_{\text{s}}), H_\mathrm{c}) + \iota_{\mathcal{C}_{\mathrm{WFR},h}}(U_{\text{s}},H_\mathrm{c}) + \frac{c_2}{c_1} \mathcal{H}(\rho_{\mathrm{s}})
\end{align}
where $\mathcal{J}$ is defined in (\ref{eqn:J_discretized}) in which $J$ is defined in \reff{eqn:J_linearlize} but with $\mathbf{A} = \mathbf{I}$, the set of constraint $\mathcal{C}_{\mathrm{WFR},h}$ is given in (\ref{eqn:C}), and $\mathcal{H}$ is defined as follows,
\begin{align}\label{eqn: mathcalH}
  \mathcal{H}(U_{\mathrm{s}}) = \mathcal{H}(\rho_{\text{s}}) = \sum_{k=0}^{Q-1} \frac{1}{\Delta t_k} \mathrm{HK}^2\left( \mathcal{T}_{\mathrm{K},h}\big(\rho_{i,j,k-\frac{1}{2}}\big)_{ij}, \mathcal{T}_{\mathrm{K},h}\big(\rho_{i,j,k+\frac{1}{2}}\big)_{ij}\right).
\end{align}
Here $\mathcal{T}_{\mathrm{K},h}$ is the discretization of $\TK$ \reff{eqn:TK}, namely, $\mathcal{T}_{\mathrm{K},h}\big(\rho_{i,j,k-\frac{1}{2}}\big)_{ij} = (\Pi_{ij})_{ij}(\rho_{i,j,k-\frac{1}{2}}\big)_{ij}$ with $\Pi$ being the stochastic matrix from the discretization of the Markov kernel $\pi$ \reff{eqn:Markov_kernel}.

There are several optimizers for solving \reff{eqn:SyncOT_discretized_Kantorovich} whose objective function is a sum of three terms, including:  Condat-Vu algorithm \cite{condat2013primal, vu2013splitting} as a generalization of the Chambolle-Pock algorithm \cite{chambolle2011first}; a Primal-Dual Fixed-Point algorithm (PDFP) \cite{chen2016primal}, in which two proximal mappings are computed in each iteration; Yan algorithm in \cite{yan2018new}, which has the same regions of acceptable parameters with PDFP and the same per-iteration complexity as Condat-Vu. Among these three algorithms, Yan algorithm has the same per-iteration complexity as the Condat-Vu algorithm (both being faster than PDFP), and shares the same range of acceptable parameters as PDFP (both wider than that of Condat-Vu). Therefore, we adopt Yan algorithm for solving \reff{eqn:SyncOT_discretized_Kantorovich}.

The Yan algorithm generates a sequence 
\[
(U_\mathrm{s}^{(\ell)}, \hat{U}_\mathrm{s}^{(\ell)}, U_\mathrm{c}^{(\ell)}, H_\mathrm{p}^{(\ell)}, \hat{H}_{\mathrm{p}}^{(\ell)}, H_{\mathrm{d}}^{(\ell)} ) \in \mathcal{D}_\text{s} \times \mathcal{D}_\text{s} \times \mathcal{D}_\text{c} \times \mathcal{M}(\mathcal{G}_{\mathrm{c}})\times \mathcal{M}(\mathcal{G}_{\mathrm{c}})\times \mathcal{M}(\mathcal{G}_{\mathrm{c}}),
\]
for primal variables $(U_{\mathrm{s}},\hat{U}_{\mathrm{s}},H_{\mathrm{p}},\hat{H}_{\mathrm{p}})$ and dual variables $(U_{\mathrm{c}},H_{\mathrm{d}})$, starting from initial 
$(U_\mathrm{s}^{(0)}, \hat{U}_\mathrm{s}^{(0)}, \\ U_\mathrm{c}^{(0)}, H_\mathrm{p}^{(0)}, \hat{H}_{\mathrm{p}}^{(0)}, H_{\mathrm{d}}^{(0)})$ 
by the following iterations
\begin{align}\label{eqn:PD_Iterations_3terms}
  \begin{cases}
    \vspace{0.05in}
    \begin{bmatrix*}[r]
    U_\mathrm{c}^{(\ell+1)}  \\
    H_{\mathrm{d}}^{(\ell+1)} 
    \end{bmatrix*}
    = \text{prox}_{\sigma\mathcal{J}^*} 
    \left(
    \begin{bmatrix*}[r]
    U_\mathrm{c}^{(\ell)}  \\
    H_{\mathrm{d}}^{(\ell)} 
    \end{bmatrix*}
    + \sigma 
    \begin{bmatrix*}[r]
    \mathcal{I}(U_\mathrm{s}^{(\ell)})  \\
    I(H_{\mathrm{p}}^{(\ell)} )
    \end{bmatrix*}
    \right), \\
    \vspace{0.05in}
    \begin{bmatrix*}[r]
    \hat{U}_\mathrm{s}^{(\ell+1)}  \\
    \hat{H}_{\mathrm{p}}^{(\ell+1)} 
    \end{bmatrix*}
    = \text{proj}_{\mathcal{C}_{\mathrm{WFR},h}} \left(
    \begin{bmatrix*}[r]
    \hat{U}_\mathrm{s}^{(\ell)}  \\
    \hat{H}_{\mathrm{p}}^{(\ell)} 
    \end{bmatrix*}
    - \tau 
    \begin{bmatrix*}[r]
    \mathcal{I}^*(U_\mathrm{c}^{(\ell+1)})  \\
    I^*(H_{\mathrm{d}}^{(\ell+1)}) 
    \end{bmatrix*}
    -\frac{c_2}{c_1}\tau
    \begin{bmatrix*}[r]
    \nabla\mathcal{H}(U_{\mathrm{s}}^{(\ell)})  \\
    \mathbf{0}
    \end{bmatrix*}
    \right), \\
    \begin{bmatrix*}[r]
    U_\mathrm{s}^{(\ell+1)}  \\
    H_{\mathrm{p}}^{(\ell+1)} 
    \end{bmatrix*} = 
    2
    \begin{bmatrix*}[r]
    \hat{U}_\mathrm{s}^{(\ell+1)}  \\
    \hat{H}_{\mathrm{p}}^{(\ell+1)} 
    \end{bmatrix*}
    - 
    \begin{bmatrix*}[r]
    \hat{U}_\mathrm{s}^{(\ell)}  \\
    \hat{H}_{\mathrm{p}}^{(\ell)} 
    \end{bmatrix*}
    +\frac{c_2}{c_1}\tau
    \begin{bmatrix*}[r]
    \nabla\mathcal{H}(\hat{U}_{\mathrm{s}}^{(\ell)})  \\
    \mathbf{0}
    \end{bmatrix*}
    -\frac{c_2}{c_1}\tau
    \begin{bmatrix*}[r]
    \nabla\mathcal{H}(\hat{U}_{\mathrm{s}}^{(\ell+1)})  \\
    \mathbf{0}
    \end{bmatrix*}.
  \end{cases}
\end{align}
Similar to the discussion in Section \ref{subsection:Primal_dual_Monge}, $\text{prox}_{\tau\mathcal{J}^*}$ is calculated by Moreau identity (\ref{eqn:Moreau}). Furthermore, $\text{prox}_{\tau\mathcal{J}}$ and $\text{proj}_{\mathcal{C}_{\mathrm{WFR},h}}$ are calculated according to Lemma \ref{lemma:iteration_proxJ} and Lemma \ref{lemma:iteration_projC}, respectively. Most importantly, $\nabla \mathcal{H}(U_{\text{s}
})$ is computed via auto-differentiation.

The condition under which Yan algorithm is convergence is discussed in detail in \cite{yan2018new}. One can also refer to \cite{cang2025synchronized} for the discussion of Yan algorithm when applying to balanced Kantorovich SyncOT.

\section{Numerical Experiments}\label{expr}

\subsection{1D examples}
We first validate our solver by solving the 1D WFR problem \reff{eqn:WFR}. In 1D experiments, marginal distributions are generated by truncated normalized Gaussian distributions on the interval $[0,1]$ 
\begin{align*}
    N(x; \mu, \sigma) = \frac{p(x; \mu,\sigma)}{\int_{[0,1]} p(x;\mu, \sigma)dx}
\end{align*}
where $p = e^{-\frac{(x-\mu)^2}{2\sigma^2}}$.
Suppose $\rho^{\ast}$ is the optimal solution of ${\mathrm{WFR}^2_{\alpha,\beta}}(\bar{\rho}_0,\bar{\rho}_1)$.  
When $\alpha/\beta = 1/4$, it is known \cite{liero2016optimal} that the total mass $m(t) = \int_X \rho^{\ast}_t(\bx) d\bf{x}$ is equal to 
\begin{align}
    m(t) = (1-t)m(0)+tm(1)-t(1-t)\mathrm{HK}_{1,4}(\bar{\rho}_0,\bar{\rho}_1).
    \label{mass_curve}
\end{align} 
We take $\bar{\rho}_0=N(\cdot;0.3, 0.05)$ and $\bar{\rho}_1=N(\cdot;0.7, 0.05)$ or $\bar{\rho}_1 = 1.2N(\cdot;0.7, 0.05)$ and show that the solutions of our solver reproduce the theoretical $m(t)$  (see Figure \ref{fig:mass}). Here when determining $m(t)$ through equation \reff{mass_curve}, we adopt the solver \texttt{ot.unbalanced.lbfgsb\_unbalanced2} in the Python Optimal Transport package \cite{flamary2021pot} to solve for $\mathrm{HK}_{1,4}(\bar{\rho}_0,\bar{\rho}_1)$ with a sufficiently fine grid resoution $M = 1024$ over $[0,1]$. For each pair of marginal distributions, the difference between the squared HK distance and the WFR loss given by our solver is less than 0.01.
\begin{figure}
    \centering
    \includegraphics[width=0.8\linewidth]{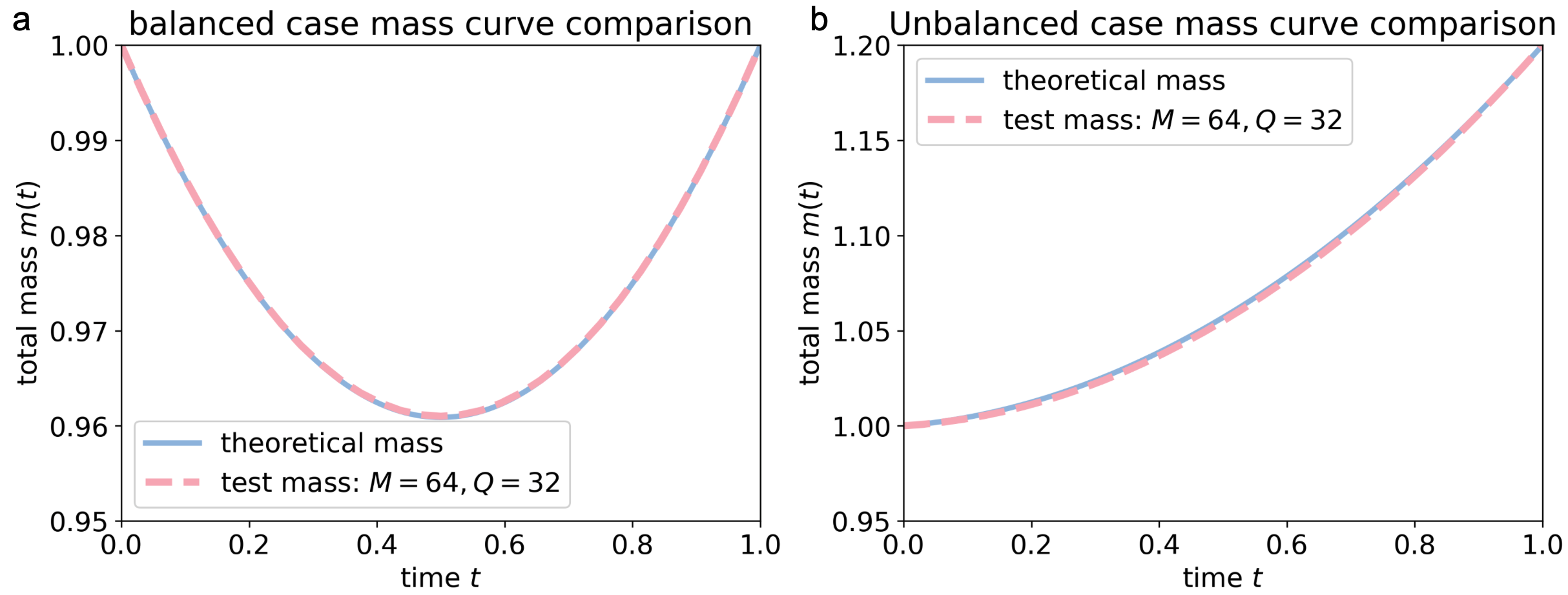}
    \caption{a) Mass curve comparison when $\bar{\rho}_0=N(\cdot;0.3, 0.05)$ and $\bar{\rho}_1=N(\cdot;0.7, 0.05)$. b) Mass curve comparison when $\bar{\rho}_0=N(\cdot;0.3, 0.05)$ and $\bar{\rho}_1=1.2N(\cdot;0.7, 0.05)$. $M = 64$ and $Q=32$ for both cases. Recall that for $X=[0,1]^2$, the grid sizes are $\Delta_x = 1/M, \Delta_y = 1/N, \Delta_t=1/Q$. If $X=[0,1]$, the grid is only determined by $M$ and $Q$. }
    \label{fig:mass}
\end{figure}

\subsection{2D unbalanced synchronized examples}

In this section we first illustrate Monge UnSyncOT with two examples where $X=[0,1]^2$ and $Y=\bT(X)$ is an embedding of $X$ in $\mathbb{R}^3$. Marginal distributions are generated with truncated normalized Gaussian distributions
\begin{align*}
    N_{[0,1]^2}(x,y;x_0,y_0,\sigma)=\frac{p(x,y;x_0,y_0,\sigma)}{\int_{[0,1]^2}p(x,y;x_0,y_0,\sigma) dxdy}
\end{align*}where $p(x,y;x_0,y_0,\sigma)=e^{-\frac{(x-x_0)^2+(y-y_0)^2}{2\sigma^2}}$.
In both examples, $\bar{\rho}_0 = N(\cdot,\cdot;0.3,0.3,0.1)$, $\bar{\rho}_1 = 1.2 N(\cdot,\cdot;0.7,0.7,0.1)$, $\alpha=\beta = 1$ and $\Delta_x=\Delta_y=\Delta_t = 1/64$. 
In the first example, $\bT(x,y)=(x,y,\exp{(-\frac{(x-0.5)^2+(y-0.5)^2}{2\times 0.15^2})})$; In the second example, $\bT(x,y)=(x,y,\sin(2\pi x)\sin(2\pi y))$. To show the effect of $c_2$, the weight of the secondary space, we choose different $c_2$ and plot $\rho$ and $H$ at different time points (see Figure \ref{fig:monge}). 
In both examples, when $c_2=0.01$, the influence of the secondary space is insignificant and the shape of $\rho$ is relatively unchanged. In the first example, when $c_2=0.05$, $\rho$ splits to avoid climbing the Gaussian bump in the middle. In the second example, when $c_2=0.1$, $\rho$ is elongated so that it does not fall into the two pits in the upper left and lower bottom of the unit square.

\begin{figure}
    \centering
    \includegraphics[width=0.9\linewidth]{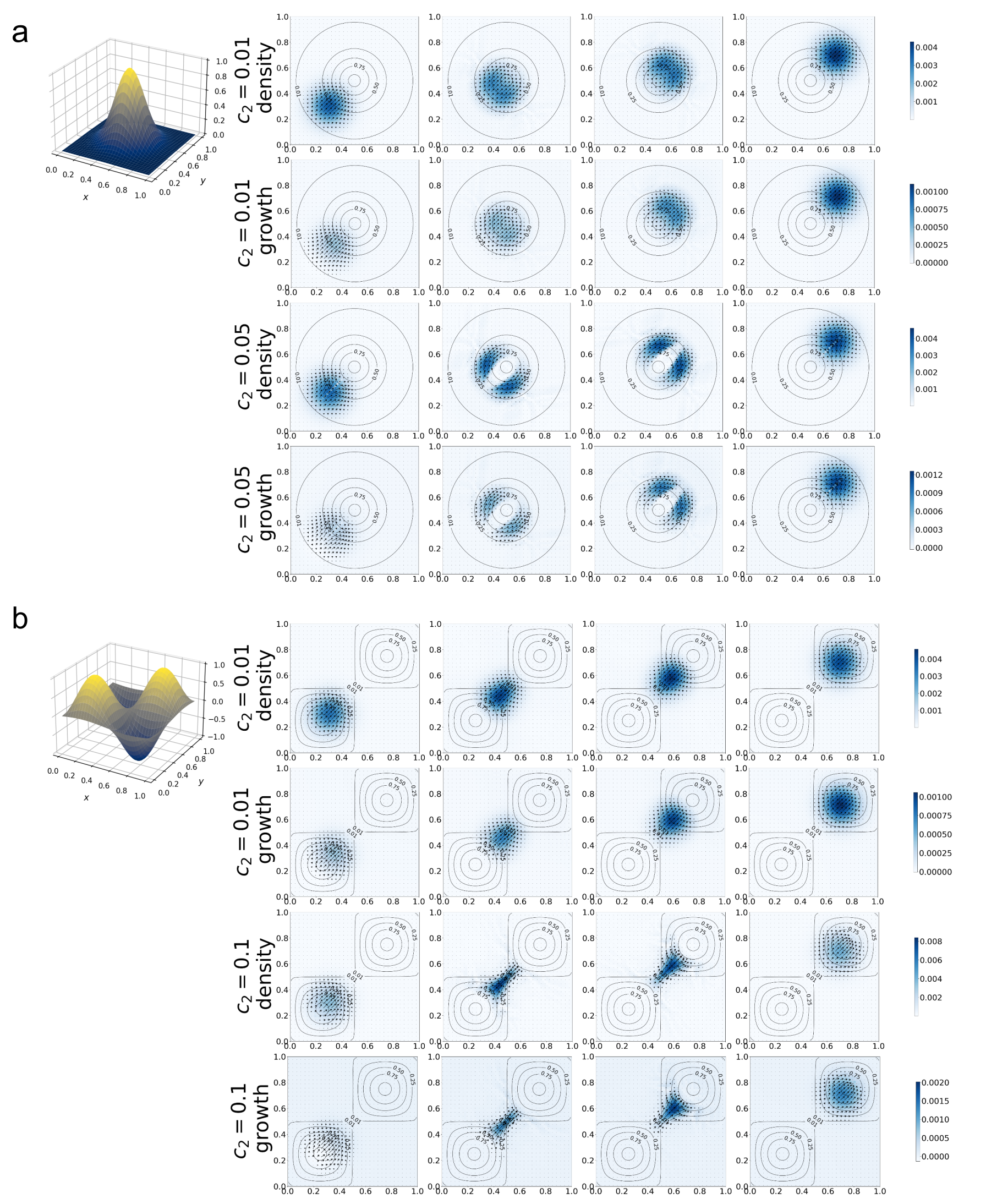}
    \caption{a) The evolving density $\rho$ and growth $H=\rho g$ in the primary space at different time points when $\bT(x,y)=(x,y,\exp{(-\frac{(x-0.5)^2+(y-0.5)^2}{2\times 0.15^2})})$ and $c_2 = 0.01$ or $0.05$. b) The evolving density $\rho$ and growth $H=\rho g$ in the primary space at different time points when $\bT(x,y)=(x,y,\sin(2\pi x)\sin(2\pi y))$ and $c_2=0.01$ or $0.1$. The map $\bT$ is visualized as contours. In both examples, $\alpha=\beta=1$.}
    \label{fig:monge}
\end{figure}

Next, we consider two examples where a map $\bT: X\to Y$ exists and demonstrate the Kantorovich UnSyncOT solver. In the numerical setting, when $\bT: X\to Y$ is given, we simply take the Markov kernel $\pi$ as an identity matrix such that the operation $\TK(\rho)$ amounts to pushing the measure $\rho(\bx)$ forward to the point $\bT(\bx)$ in $Y$, and it generalizes even to the case when $\bT$ is not injective. We revisit the first Monge UnSyncOT example with the Kantorovich solver. As shown in Figure \ref{fig:kant}, the Kantorovich UnSyncOT also produces a split trajectory of densities to avoid the Gaussian bump in the middle, similar to the results of Monge UnSyncOT. In the second example, the space $X=[0,1]^2$ is equipped with certain background color. The color can be represented as a point cloud $Y$ within the three-dimensional RGB color space. We choose $\bar{\rho}_0 = N(\cdot,\cdot;0.3,0.3,0.1)$, $\bar{\rho}_1 = 1.2 N(\cdot,\cdot;0.7,0.7,0.1)$. In this example, UnSyncOT results in trajectories that tend to stay in the darker region to minimize the transport cost in the secondary color space. 

\begin{figure}
    \centering
    \includegraphics[width=0.8\linewidth]{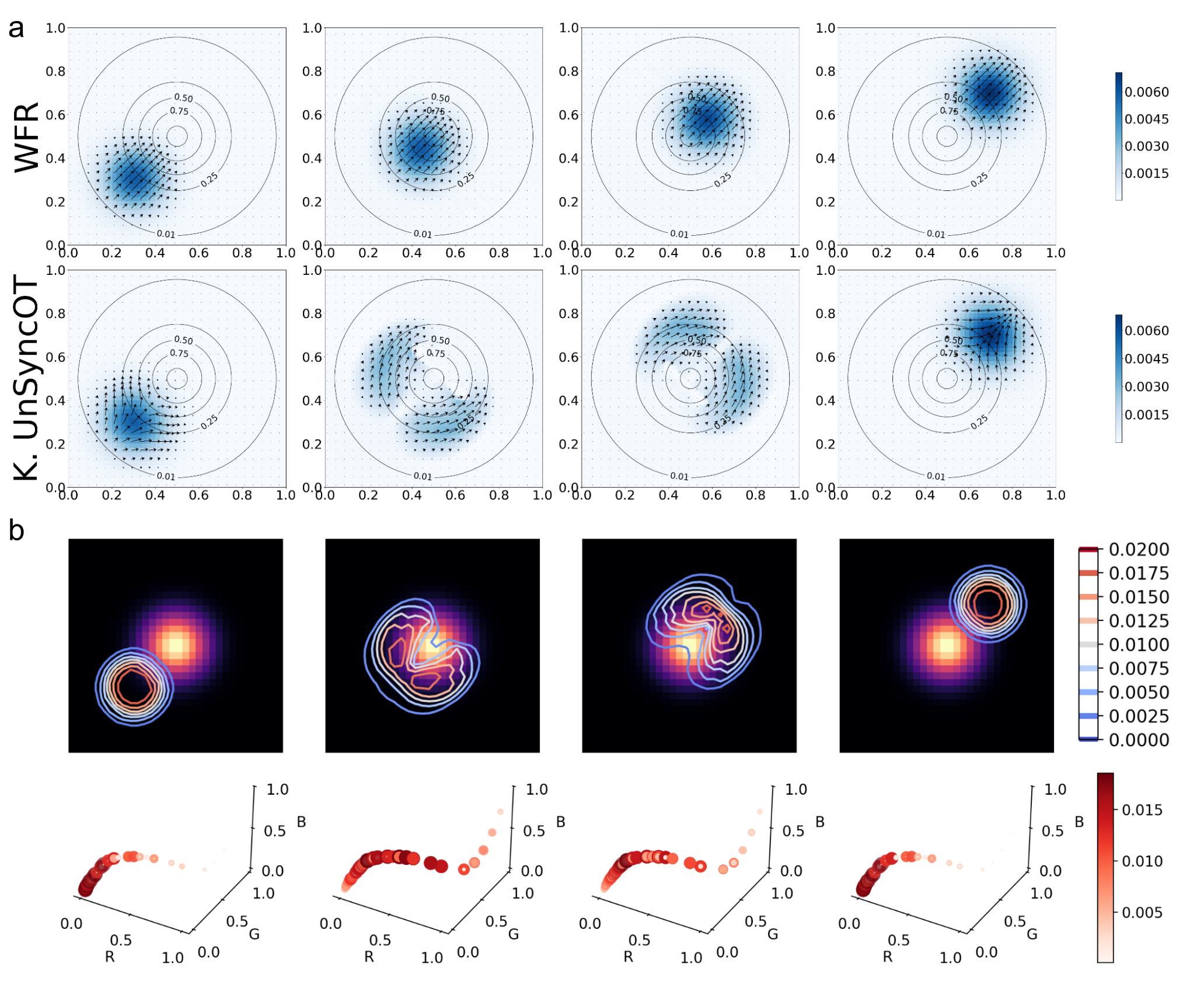}
    \caption{a) The evolving density $\rho$ in the primary space at different time points given by WFR or Kantorovich UnSyncOT for the first Monge UnSyncOT example. Here $Y=\bT(x)$ where $\bT(x,y)=(x,y,\exp{(-\frac{(x-0.5)^2+(y-0.5)^2}{2\times 0.15^2})})$,$ \alpha=\beta=1$, and $c_2/c_1=20$. b) The background color example. $\alpha=\beta=1$ and $c_2=0.5$. Top: the evolving density $\xi = \TK(\rho)$ in the primary space at different time points. The RGB color is shown in the background. Bottom: The evolving density in the secondary color space. Note that $\xi_0$ and $\xi_1$ are supported on the same region in the color space, and Kantorovich UnSyncOT is able to take that fact into account. }
    \label{fig:kant}
\end{figure}

\subsection{biology inspired examples}

In this section, we present two toy examples that are inspired by biological applications. The first one uses the human embryoid body differentiation dataset embedded into 2D with PHATE \cite{moon2019visualizing}. 
When WFR dynamic optimal transport is applied to the PHATE embedding, some trajectories might lie outside the data manifold of the PHATE embedding $X_{\text{data}}$, and therefore are not biologically meaningful.
We can use Monge UnSyncOT and introduce a constraint to enforce trajectories not leave the data manifold $X_{\text{data}}$. The human embryoid body differentiation dataset contains five time points: Day 0-3, Day 6-9, Day 12-15, Day 18-21, and Day 24-27. Since the current solver requires $X=[0,1]^2$, we first transform the PHATE embeddings of the whole dataset to the unit square, and discretize the continuous density of each time point to a 2D probability histogram using a grid where $M=N=64$. We choose Day 6-9 as $\bar{\rho}_0$ and Day 12-15 as $\bar{\rho}_1$ (we multiply the Day 12-15 histogram by 1.2 so that the transport becomes unbalanced) and we hope that most points travel near the data manifold $X_{\text{data}}$, the set of points where $\bar{\rho}_0$ or $\bar{\rho}_1$ is greater than a fixed threshold. 
The map $\bT$ is defined by
$\bT(x, y) = (x,y,10z(x,y)$), where $z(\bx)$ is 
\begin{align*}
    \inf \{\|\bx -\tilde\bx\| | \tilde\bx \in X_{\text{data}} \}.
\end{align*} 
\begin{figure}[t]
    \centering
    \includegraphics[width=0.95\textwidth]{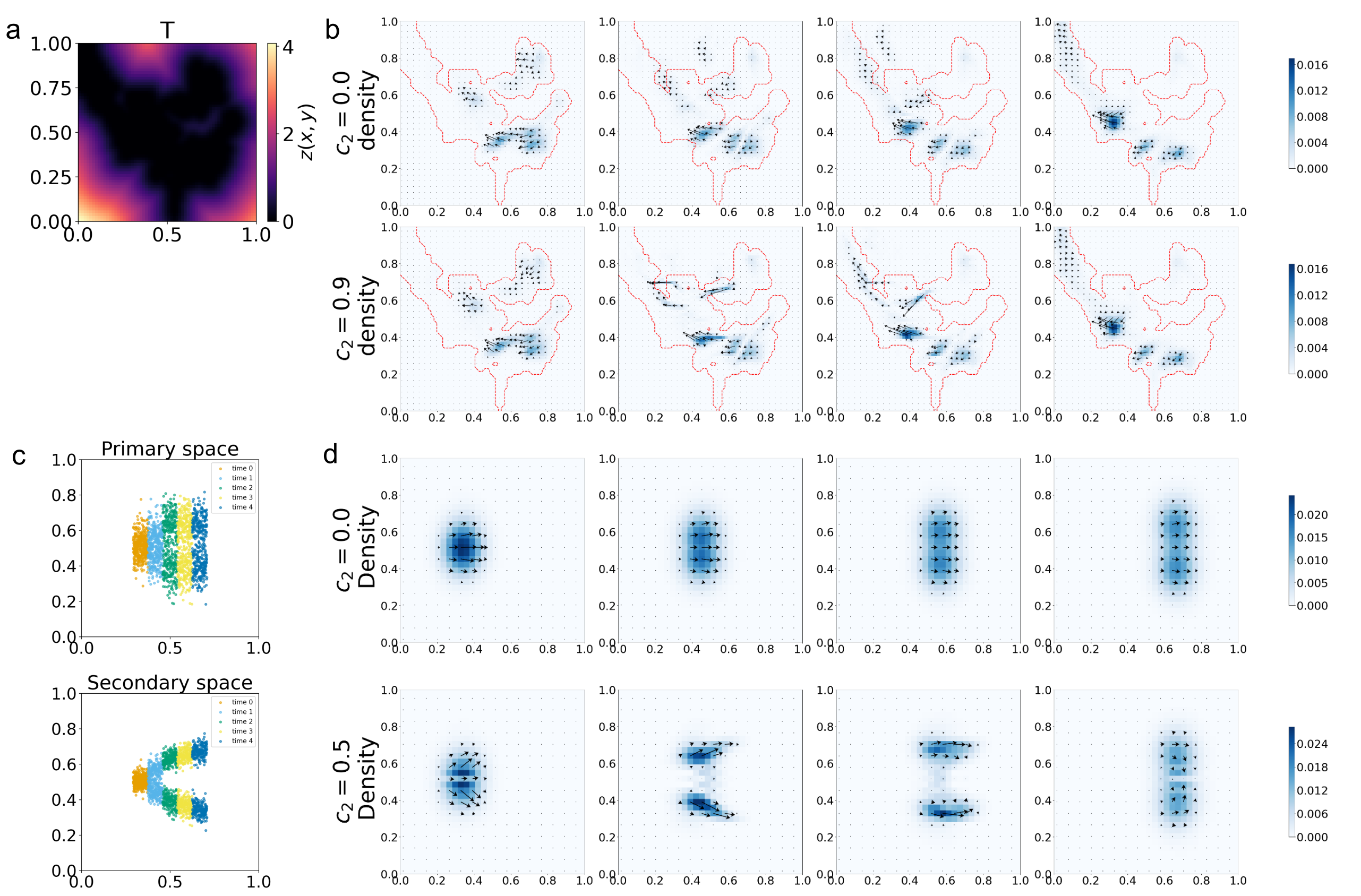}
    \caption{a) The map $\bT(x,y)=(x,y,10z(x,y))$. The data manifold $X_{\text{data}}$ occupies the black region. b) The evolving density $\rho$ in the primary space at different time points when $c_2 = 0.0$ or $0.9$ for the PHATE example. $X_{\text{data}}$ is visualized as the region inside the red contour. When $c_2=0.9$, most part of $\rho_t$ is confined inside $X_{\text{data}}$. c) The normalized simulated scRNA-seq and scATAC-seq data. d) The evolving density $\rho$ in the primary space at different time points when $c_2 = 0.0$ or $0.5$ for the multimodal example. In both examples, $\alpha=\beta=1$. When $c_2=0.5$, a branching apparently occurs.}
    \label{fig:bio}
\end{figure}
When $c_2 = 0$, the initial distribution evolves toward the terminal distribution by approximately straight trajectories. 
As $c_2$ increases, the resulting trajectories closely follow the data-supported region $X_{\text{data}}$. This demonstrates that incorporating secondary-space information imposes a meaningful geometric constraint, leading to biologically plausible trajectories.

The second example concerns the integration of multimodal information in biology.
In cell differentiation, gene expression profiles may remain similar in the early stages, while chromatin accessibility already exhibits lineage-specific signals. Consequently, cells that are indistinguishable in the scRNA-seq space can be clearly separated in the scATAC-seq space.
To simulate this setting, we generate two paired point clouds in $X=Y=[0,1]^2$. Each point is associated with a latent time $t\in[0,1]$, and the timeline is discretized into five consecutive segments. All cells start from a common cluster. After an initial shared phase, the population progressively bifurcates into two trajectories. In the primary space, the branching phenomenon are designed to be weak compared to that of the secondary space. 
We learn a map $\bT$ using a simple neural network, trained with a mean squared error loss on paired observations.
In this experiment, the time 0 distribution in $X$ is set as the initial while time 4 distribution in $X$ is set as the terminal. The terminal mass is set to be 1.5 times larger than the initial mass. 
As shown in Figure \ref{fig:bio}, incorporating information from the secondary space has a noticeable effect on the evolution of the primary space. When $c_2=0.5$, a clear branching behavior emerges. This experiment suggests that UnSyncOT can be used to integrate information from scATAC-seq data to improve interpolation of cell development.

\section{Conclusion and Discussion}

A novel framework is introduced to synchronize the unbalanced dynamical optimal transport across multiple spaces (primary and secondary spaces). The primary and secondary spaces are linked through two options, the pushforward operator (Monge type), and the Markov kernel (Kantorovich type). For both types, we show that the UnSyncOT can be recast into a single-space formulation, and also fit into the framwork of dissipation distances. Two limit cases are considered. When it is pure transport in both spaces, the problem reduces to (balanced) synchronized optimal transport, which is proposed in our early work \cite{cang2025synchronized}. We show that, under certain conditions, Monge SyncOT generates constant-speed geodesics. When it is pure reaction in both spaces, the problem becomes synchronized Fisher-Rao problem. For the Monge Fisher-Rao case, synchronization on the secondary space indeed does not introduce any new dynamics. On the other hand, for the Kantorovich Fisher-Rao case, the map $\TK$ actually contracts the Fisher-Rao distance. Meanwhile, we also design effective primal-dual algorithms to solve the UnSyncOT problems, Chambolle-Pock algorithm for Monge type and the Yan algorithm for Kantorovich type. Numerical experiments show the convergence, stability and efficiency of the proposed algorithms.

This work can be further extended in several directions. Firstly, efficient numerical algorithms for high dimensions are needed to apply UnSyncOT to high-dimensional data such as single-cell multi-omics data. Recently, deep learning-based approaches have been developed to solve high-dimensional dynamical OT problems \cite{sha2024reconstructing,tong2024improving, zhang2025modeling, neklyudov2023computational}, which can potentially be extended to solve high-dimensional UnSyncOT problems. Secondly, in practice, the inter‑space correspondence may be unknown or only partially specified. A natural extension is to jointly learn the synchronization operator, such as a parametric Markov kernel or a neural embedding, together with the UnSyncOT dynamics. Lastly, it is of interest to analyze the convergence rates of the proposed primal-dual schemes under problem-dependent conditions, and possibly refine the error estimates for the HK quadrature beyond $O(\Delta t)$ under stronger regularity conditions.

\section*{Acknowledgements}

This work is partly supported by NSF grant DMS2151934 (ZC), NIH grant R01GM152494 (ZC) and NSF grant DMS2142500 (YZ).


\end{document}